\newcommand{\tmop}[1]{\ensuremath{\operatorname{#1}}}
\newcommand{\assign}{:=}
\newcommand{\nin}{\not\in}
\newtheorem{theorem}{Theorem}[section]
\newtheorem{lemma}[theorem]{Lemma}
\newtheorem{proposition}[theorem]{Proposition}
\newtheorem{corollary}[theorem]{Corollary}
\theoremstyle{remark}
\newtheorem{example}[theorem]{Example}
\newtheorem{remark}[theorem]{Remark}
\theoremstyle{definition}
\newtheorem{definition}[theorem]{Definition}
\title{Topology of eigenspace posets for imprimitive reflection groups}
\date{}
\author{Justin Koonin}
\dedicatory{\upshape
School of Mathematics and Statistics \\
University of Sydney, NSW 2006, Australia\\[.5em]
{\it E-mail address:} \ \texttt{justin.koonin@sydney.edu.au} }
\begin{document}

\thanks{\noindent{AMS subject classification (2010): 05E45, 20F55}}


\thanks{\noindent{Keywords: Poset topology, unitary reflection groups, imprimitive reflection groups, Dowling lattices, exponential Dowling structures}}

\thanks{\noindent{The author is supported by ARC Grant \#DP110103451 at the University of Sydney.}}

\begin{abstract}

This paper studies the poset of eigenspaces of elements of an imprimitive unitary reflection group, for a fixed eigenvalue, ordered by the reverse of inclusion.  The study of this poset is suggested by the eigenspace theory of Springer and Lehrer.  The posets are shown to be isomorphic to certain subposets of Dowling lattices (the "$d$-divisible, $k$-evenly coloured Dowling lattices").  This enables us to prove that these posets are Cohen-Macaulay, and to determine the dimension of their top homology.

\end{abstract}

\maketitle

\section{Introduction}

Let $V$ be a complex vector space of finite dimension, and $G \subseteq GL(V)$ a unitary reflection group in $V$.  Let $\mathcal{A}(G)$ be the set of
reflecting hyperplanes of all reflections in $G$, and
$\mathcal{L}(\mathcal{A}(G))$ the poset of of intersections of the hyperplanes in $\mathcal{A}(G)$. This poset is closely connected to the topology of the hyperplane complement of $\mathcal{A}(G)$ (see \cite{Arnold1969}, \cite{Brieskorn1973}, \cite{OrSo1980},
\cite{OrSo1980-2}, \cite{Lehrer1995}, \cite{BlLe2001}).

It is well-known that the poset $\mathcal{L}(\mathcal{A}(G))$ is a geometric
lattice. Hence it is Cohen-Macaulay, and its reduced homology
vanishes except in top dimension. The poset $\mathcal{L}(\mathcal{A}(G))$ is
also known to coincide with the poset of fixed point subspaces (or
1-eigenspaces) of elements of $G$ (see \cite[Theorem 2.11]{Koonin2012-5}). Springer and Lehrer (\cite{Springer1974}, \cite{LeSp1999}, \cite{LeSp1999-2}) developed a general theory of eigenspaces for unitary reflection groups.
This paper is the second in a series (commencing with \cite{Koonin2012-5}) whose purpose is to study topological properties of
generalisations of $\mathcal{L}(\mathcal{A}(G))$ for arbitrary eigenvalues. This paper concentrates on the case where $G$ is the imprimitive unitary reflection group $G (r, p, n)$, but the definitions in the following paragraph remain valid for any unitary reflection group (indeed, for any finite subset of $\tmop{End} (V)$.

Namely, let $\zeta$ be a complex root of unity, and $g$ be an element of $G$. Define $V (g, \zeta)
\subseteq V$ to be the $\zeta$-eigenspace of $g.$ That is, $V (g, \zeta) := \{v \in V \mid g
v = \zeta v\}.$  Let $\mathcal{S}_{\zeta}^V (G)$ be
the set $\{V (g, \zeta) \mid g \in G\}$, partially ordered by the reverse of
inclusion. More generally, if $\gamma \in N_{GL(V)} (G)$ (the
normaliser of $G$ in $GL(V)$) and $\gamma G$ is a reflection
coset, we may define $\mathcal{S}_{\zeta}^V (\gamma G)$ to be the set $\{V (x, \zeta)
\mid x \in \gamma G\}$, partially ordered by the reverse of inclusion.  This is a linear analogue of the poset of
$p$-subgroups of a group $G$ first studied by Quillen \cite{Quillen1978}.  The study of this poset was first suggested by Lehrer (see \cite[Appendix C, p.270]{LeTa2009}).

The paper commences with a discussion of wreath products
and the infinite family of imprimitive unitary reflection groups $G (r, p,
n)$. Following this is a combinatorial section on Dowling lattices. In
particular we introduce the ``$d$-divisible, $k$-evenly coloured'' Dowling
lattices, which to our knowledge have not been studied before.

An important result is Theorem \ref{theorem:dowling}, which describes an
isomorphism between the poset $\mathcal{S}_{\zeta}^V (\gamma G)$, where $G$ is
an imprimitive unitary reflection group, and these sublattices of Dowling
lattices.

From this isomorphism it follows, in the imprimitive case, that
the poset $\mathcal{S}_{\zeta}^V (\gamma G)$ is Cohen-Macaulay (Corollary \ref{corollary:CM}).  This is perhaps the key result of this paper, which is used to prove a similar statement for arbitrary unitary reflection groups in \cite{Koonin2012-5}.

Given the poset $\mathcal{S}_{\zeta}^V (\gamma G)$, remove the unique maximal element (which
will be shown to always exist), and the unique minimal element (if it exists), and
denote the resulting poset $\widetilde{\mathcal{S}}_{\zeta}^V (\gamma G)$. This construction is necessary since any poset with a unique minimal or maximal element is contractible, so its homology is uninteresting.  Since
$\widetilde{\mathcal{S}}_{\zeta}^V (\gamma G)$ is also Cohen-Macaulay when $G =
G (r, p, n)$, its reduced homology vanishes except in top dimension. When
$G$ is an imprimitive reflection group, the posets $\mathcal{S}_{\zeta}^V
(\gamma G)$ are in fact examples of exponential Dowling structures, a class of
posets defined by Ehrenborg and Readdy \cite{EhRe2009} as generalisations of
Stanley's exponential structures \cite{Stanley1979}. This enables us to find
generating functions for the M\"obius function of these posets, and hence for the dimension of the top homology.  This has applications to the representation theory of unitary reflection groups.

The notation and preliminary material for this paper can be found in \cite[\S2]{Koonin2012-5}.  Rather than repeat material here, we refer the reader to that paper. 

\section{Shellability}

A useful tool for establishing Cohen-Macaulayness of posets or simplicial
complexes is shellability. The original theory applied only to pure
complexes; a non-pure generalisation was given by Bj\"orner and Wachs
\cite{BjWa1996}. The posets in this thesis are all pure, but it is no more
difficult to give the general definition. A good reference for this material
is \cite{Wachs2004}, and this treatment is taken from there.

\begin{definition}{\cite[\S3.1]{Wachs2004}} For each simplex $F$ of a simplicial
  complex $\Delta$, let $\langle F \rangle$ denote the subcomplex generated by
  $F$ (i.e. $\langle F \rangle =\{G \in \Delta \mid G \subseteq F\}) .$  Then
  $\Delta$ is said to be {\em shellable} if its facets (maximal faces) can be
  arranged in linear order $F_1, F_2, \ldots, F_t$ in such a way that the
  subcomplex $\left( \bigcup_{i = 1}^{k - 1} \langle F_i \rangle \right) \cap
  \langle F_k \rangle$ is pure and $(\dim F_k - 1)$-dimensional for all $k =
  2, \ldots, t$. Such an ordering of facets is called a shelling. 
\end{definition}

Recall that if $\Delta$ is a pure complex of dimension $n$, all facets have
dimension $n$, so that $\left( \bigcup_{i = 1}^{k - 1} \langle F_i \rangle
\right) \cap \langle F_k \rangle$ must have dimension $(n - 1)$ for all $k =
2, 3, \ldots, t$.

This definition carries over to posets if we replace simplices
by chains. Unlike the property of being Cohen-Macaulay, shellability is not
a topological property. As an example, there exist nonshellable
triangulations of the 3-ball and the 3-sphere (see \cite[\S3.1]{Wachs2004} for
more details).

In order to establish shellability of a simplicial complex, Bj\"orner
and Wachs developed the notion of lexicographic shellability for bounded
posets, first for pure posets (\cite{BjWa1982}, \cite{BjWa1983}) and later in the
general situation (\cite{BjWa1996}, \cite{BjWa1997}). There are two basic versions of
lexicographic shellability - edge labelling (EL-) shellability and chain
labelling (CL-) shellability. It is the second of these which concerns us
here.

\begin{definition}
  Let $P$ be a bounded poset, and $\Lambda$ any poset. Let
  $\mathcal{M}\mathcal{E}(P)$ be the set of pairs $(c, x \prec y)$, where $c$ is
  a maximal chain in $P$ and $x \prec y$ is an edge along that chain. A
  {\em chain-edge labelling} of $P$ is a map $\lambda : \mathcal{M}\mathcal{E}(P)
  \rightarrow \Lambda$ which satisfies the following condition:

Suppose two maximal chains coincide along their bottom $d$ edges, for some $d$. Then their labels coincide along those edges.
\end{definition}

The following example of a chain-edge labelling is found in \cite[Figure 2.2]{BjWa1983}
(and \cite[Figure 3.3.1]{Wachs2004}). In this case, $\Lambda =\mathbb{Z}$. Note that the top left edge has two labels -- it has the label 3 if paired with
the leftmost maximal chain, and 1 if paired with the other maximal chain which
shares this edge.

\begin{center}
\begin{tikzpicture}
\filldraw(-0.25,1)circle(1.5pt);
\filldraw(-0.25,2)circle(1.5pt);
\filldraw(1,0)circle(1.5pt);
\filldraw(2.25,1)circle(1.5pt);
\filldraw(2.25,2)circle(1.5pt);
\filldraw(1,3)circle(1.5pt);
\draw(-0.25,1)--node[below left]{1}(1,0);
\draw(1,0)--node[below right]{3}(2.25,1);
\draw(2.25,1)--node[right]{1}(2.25,2);
\draw(2.25,2)--node[above right]{2}(1,3);
\draw(1,3)--node[above left]{3}(-0.25,2);
\draw(1,3)--node[below right]{1}(-0.25,2);
\draw(-0.25,2)--node[left]{2}(-0.25,1);
\draw(-0.25,1)--node[below]{3}(1,1.5);
\draw(1,1.5)--(2.25,2);
\draw(-0.25,2)--(1,1.5);
\draw(1,1.5)--node[below]{2}(2.25,1);
\end{tikzpicture}
\end{center}

Given a chain-edge labelling and a maximal chain $c = ( \hat{0}
\prec x_1 \prec \ldots \prec x_t \prec \hat{1}$) it is possible to form a
word
\[\lambda (c) = \lambda (c, \hat{0} \prec x_1) \lambda
(c, x_1 \prec x_2) \ldots \lambda (c, x_t \prec \hat{1}).
\]

We say that $c$ is {\em increasing} if the
associated word $\lambda (c)$ is {\em strictly} increasing. That is, $c$ is increasing if
\[
\lambda (c, \hat{0} \prec x_1) < \lambda (c, x_1 \prec
x_2) < \ldots < \lambda (c, x_t \prec \hat{1}) \quad\mbox{in $\Lambda$.}
\]

We say that $c$ is {\em decreasing} if the
associated word $\lambda (c)$ is {\em weakly} decreasing. That is, $c$ is decreasing if
\[
\lambda (c, \hat{0} \prec x_1) \geqslant \lambda (c, x_1
\prec x_2) \geqslant \ldots \geqslant \lambda (c, x_t \prec \hat{1}) \quad\mbox{ in
$\Lambda$}.
\]
We order the maximal chains lexicographically by using the
lexicographic order on the corresponding words.

To define chain-lexicographic labelling, it is necessary to restrict
chain-edge labellings $\lambda : \mathcal{M}\mathcal{E}(P) \rightarrow
\Lambda$ to closed intervals $[x, y]$. However as seen above each edge may
have more than one label, depending upon which maximal chain containing that
edge we choose. Given the condition on the chain-edge labelling, each
maximal chain $r$ of $[\hat{0}, x]$ determines a unique restriction of
$\lambda$ to $\mathcal{M}\mathcal{E}([x, y]) .$

\begin{definition}
  Let $P$ be a bounded poset. Define a closed rooted interval $[x, y]_r$ to be
  a closed interval $[x, y]$, together with a maximal chain $r$ of $[ \hat{0},
  x]$.
\end{definition}

\begin{definition}\label{definition:CLlabelling}
  Let $P$ be a bounded poset. A {\em chain-lexicographic labelling (CL-labelling)} of $P$ is a chain-edge labelling
  such that in each closed rooted interval $[x, y]_r$ of $P$, there is a unique
  strictly increasing maximal chain, which lexicographically precedes all
  other maximal chains of $[x, y]_r$. If a poset admits a CL-labelling it is
  said to be {\em CL-shellable}.
\end{definition}

The chain-edge labelling in the figure above is a CL-labelling. The unique
increasing maximal chain for the whole poset is the leftmost maximal chain.

The following implications between the concepts we have discussed are noted in \cite{Wachs2004}:

\begin{proposition}{\cite[Lecture 4.1]{Wachs2004}}\label{proposition:CMimplicationswachs} 
The following implications hold for a poset $P$:
\begin{align*}
\mbox{(pure) CL-shellable} &\Rightarrow \mbox{(pure) shellable}\\
&\Rightarrow \mbox{homotopy Cohen-Macaulay}\\
&\Rightarrow \mbox{Cohen-Macaulay over $\mathbb{Z}$}\\
&\Rightarrow \mbox{Cohen-Macaulay over $\mathbb{F}$ (for any field
$\mathbb{F}$)}
\end{align*}
\end{proposition}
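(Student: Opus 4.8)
\section*{Proof proposal}

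The statement is a chain of four implications, which I would establish in the order in which they are listed; the combinatorial content is concentrated entirely in the first one, and the remaining three are standard facts of algebraic topology. For the implication ``CL-shellable $\Rightarrow$ shellable'', suppose $P$ is a bounded poset with a CL-labelling $\lambda$ (Definition~\ref{definition:CLlabelling}), and let $\Delta$ be the order complex of $P$ with $\hat 0$ and $\hat 1$ deleted, so that the facets of $\Delta$ correspond to the maximal chains of $P$. The plan is to show that the lexicographic order on maximal chains (induced by the lexicographic order on the associated words $\lambda(c)$, extended to a linear order if two maximal chains receive the same word) is a shelling order. Fix a maximal chain $c = (\hat 0 \prec x_1 \prec \dots \prec x_t \prec \hat 1)$ which is not lexicographically first. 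One then shows two things. First, for every maximal chain $c'$ lexicographically preceding $c$, the facet $\langle c' \rangle \cap \langle c \rangle$ is contained in $c \setminus \{x_i\}$ for some descent $x_i$ of the word $\lambda(c)$: this uses the defining property of a CL-labelling applied to the rooted interval starting at the largest element along which $c$ and $c'$ agree, since otherwise $c$ would be the unique strictly increasing, hence lexicographically first, chain of that rooted interval, contradicting $c' < c$. Second, for every descent $x_i$ of $\lambda(c)$, the unique-increasing-chain property applied to the rooted interval $[x_{i-1}, x_{i+1}]$ produces a maximal chain $c''$ agreeing with $c$ except at $x_i$ and lexicographically below $c$, so that $c \setminus \{x_i\}$ does lie in $\bigcup_{c' < c}\langle c'\rangle$. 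Together these identify $\bigl(\bigcup_{c' < c}\langle c'\rangle\bigr)\cap\langle c\rangle$ with the subcomplex generated by $\{\,c\setminus\{x_i\} : x_i \text{ a descent of } \lambda(c)\,\}$, which is pure of dimension $\dim c - 1$, exactly as required for a shelling. (In the non-pure setting one checks the dimension count and purity directly; since all posets considered here are pure this is automatic.)

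The remaining implications are soft. For ``shellable $\Rightarrow$ homotopy Cohen-Macaulay'' I would invoke two standard facts: a shellable complex is homotopy equivalent to a wedge of spheres --- in the pure $d$-dimensional case, of $d$-spheres --- and hence is $(d-1)$-connected; and the link of any face in a shellable complex is again shellable, the shelling order on the ambient complex inducing one on the link. Applying the first fact to every link then shows that $\mathrm{lk}_\Delta \sigma$ is $(\dim\mathrm{lk}_\Delta\sigma - 1)$-connected for each face $\sigma$ (including $\sigma = \emptyset$), which is precisely homotopy Cohen-Macaulayness. Next, ``homotopy Cohen-Macaulay $\Rightarrow$ Cohen-Macaulay over $\Z$'' is the Hurewicz theorem applied link by link: an $(m-1)$-connected space has vanishing reduced integral homology in degrees below $m$, and with $m = \dim\mathrm{lk}_\Delta\sigma$ this is exactly the homological (Reisner) criterion for Cohen-Macaulayness over $\Z$. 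Finally, ``Cohen-Macaulay over $\Z$ $\Rightarrow$ Cohen-Macaulay over $\F$'' follows from the universal coefficient theorem: for each link there is a short exact sequence relating $\widetilde H_i(\mathrm{lk}_\Delta\sigma;\F)$ to $\widetilde H_i(\mathrm{lk}_\Delta\sigma;\Z)\otimes\F$ and $\mathrm{Tor}\bigl(\widetilde H_{i-1}(\mathrm{lk}_\Delta\sigma;\Z),\F\bigr)$, and when the integral homology vanishes below the top dimension both flanking terms vanish for $i < \dim\mathrm{lk}_\Delta\sigma$, hence so does the middle one.

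The only real obstacle is the first implication: keeping track of the correspondence between descents of $\lambda(c)$ and the facets of $\langle c\rangle$ already covered by lexicographically earlier chains requires careful use of the rooted-interval structure, and this bookkeeping is considerably more delicate in the non-pure generality of Bj\"orner--Wachs. This is why we only quote the result here rather than reproduce its proof. The other three steps present no difficulty beyond citing the wedge-of-spheres theorem for shellable complexes, the Hurewicz theorem, and the universal coefficient theorem.
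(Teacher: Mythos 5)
The paper gives no proof of this proposition at all---it is quoted directly from Wachs's lecture notes (and ultimately Bj\"orner--Wachs)---and your outline correctly reproduces exactly the standard arguments that the citation covers: the lexicographic order on maximal chains is a shelling with restriction faces given by the descents of $\lambda(c)$, links of shellable complexes are shellable and shellable complexes are wedges of top-dimensional spheres, then Hurewicz, then universal coefficients. The only small imprecision is in the first implication: the rooted interval you compare $c$ and $c'$ on should run from the last element they share below the divergence to the \emph{first} element they share above it (not all the way to $\hat 1$), so that the descent produced is guaranteed to lie off $c \cap c'$; with that adjustment your sketch matches the cited proof.
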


It is known that all these implications are strict. For a
further discussion see \cite[Appendix]{Bjorner1980}, \cite[\S9]{BjWa1983}. We
note in passing that a similar chain of implications holds in the non-pure
case, when each of the above properties is replaced by its non-pure analogue.

We now present a technical tool for establishing CL-shellability, called
{\em recursive atom ordering}. It is equivalent to
CL-shellability, but does not involve edge labellings.

\begin{definition}\label{def:recursive}
  A bounded poset $P$ is said to admit a
  recursive atom ordering if its length $l (P)$ is 1, or if $l (P) > 1$ and
  there is an ordering $a_1, a_2, \ldots, a_t$ of the atoms of $P$ that
  satisfies
  \begin{itemize}
\item[(i)] For all $j = 1, 2, \ldots, t$ the interval $[a_j, \hat{1}]$
  admits a recursive atom ordering in which the atoms of $[a_j, \hat{1}]$ that
  belong to $[a_{i,} \hat{1}]$, for some $i < j$, come first.
\item[(ii)] For all $i < j$, if $a_i, a_j < y$ then there is a $k <
  j$ and an atom $z$ of $[a_j, \hat{1}]$ such that $a_k < z \leqslant y.$
\end{itemize}
A {\em recursive coatom ordering} is a recursive atom ordering of the dual
  poset $P^*$.
\end{definition}

A key theorem relating recursive atom orderings to shellability, which will be
used later is the following:

\begin{theorem}{\cite{BjWa1983},\cite{Wachs2004}}\label{theorem:recursive} 
A bounded poset $P$ is CL-shellable if and only if $P$ admits a recursive atom ordering.
\end{theorem}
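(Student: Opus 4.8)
The plan is to prove the two implications simultaneously by induction on the length $l(P)$ of the bounded poset $P$. The base case $l(P) = 1$ is immediate: then $P$ is the two-element chain $\hat{0} \prec \hat{1}$, which is CL-shellable (its unique maximal chain has a one-letter, hence vacuously increasing and lexicographically least, word) and which is granted a recursive atom ordering outright by Definition~\ref{def:recursive}. So one may assume $l(P) > 1$ and that the equivalence is known for all bounded posets of smaller length.

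For the direction ``recursive atom ordering $\Rightarrow$ CL-shellable'', let $a_1, \dots, a_t$ be a recursive atom ordering of $P$, together with compatible recursive atom orderings of each interval $[a_j, \hat{1}]$ (with the atoms lying above some earlier $a_i$ appearing first). By the inductive hypothesis, fix CL-labellings $\lambda_j$ of the $[a_j, \hat{1}]$, and translate them so that all their values exceed $t$. I would build a chain-edge labelling $\lambda$ of $P$ as follows: label the edge $\hat{0} \prec a_j$ by the integer $j$; along a maximal chain $c$ whose second element $x$ is an atom of $[a_j, \hat{1}]$ that also lies above an earlier atom, label $a_j \prec x$ by the least index $i$ with $a_i \leqslant x$ (forcing the word of $c$ to drop at its second letter) and label the remaining edges of $c$ by $\lambda_i$; along a maximal chain $c$ whose second element lies above no earlier atom, label the edges of $c$ above $a_j$ by $\lambda_j$. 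That $\lambda$ is a chain-edge labelling then follows directly from this recipe together with the chain-edge property of the $\lambda_j$. It remains to show that in each rooted interval $[x, y]_r$ of $P$ there is a unique strictly increasing maximal chain and that it is lexicographically least: when $x \neq \hat{0}$ this is the inductive hypothesis applied to $[x, \hat{1}]$ through $\lambda_x$, and when $x = \hat{0}$ it follows from the inductive hypothesis applied to the relevant $[a_j, y]$ together with axioms (i)--(ii) of Definition~\ref{def:recursive}, which are precisely what allow one to increase by a single step from the smallest admissible atom into $y$.

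For the converse, let $\lambda$ be a CL-labelling of $P$. Two observations start the argument: $\lambda(\hat{0} \prec a)$ is well defined for each atom $a$, since by the chain-edge condition with $d = 1$ all maximal chains through $a$ agree on that first edge; and $\lambda$ restricts to a CL-labelling of each interval $[a, \hat{1}]$ rooted at the edge $\hat{0} \prec a$, so by induction each $[a, \hat{1}]$ admits a recursive atom ordering. I would then order the atoms $a_1, \dots, a_t$ of $P$ by comparing lexicographically, for each atom $a_i$, the maximal chain formed by the edge $\hat{0} \prec a_i$ followed by the unique increasing maximal chain of $[a_i, \hat{1}]$; this order refines the order by $\lambda(\hat{0} \prec a_i)$, and its first atom $a_1$ is the one through which the unique increasing maximal chain of all of $P$ passes. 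One then verifies axioms (i) and (ii): (i) by checking, using the inductive hypothesis and the fact that increasing chains are lexicographically least, that the recursive atom ordering of $[a_j, \hat{1}]$ may be chosen with the atoms above some $a_i$, $i < j$, occurring first; and (ii) by extracting the required atom $z$ of $[a_j, \hat{1}]$ with $a_k < z \leqslant y$ (for some $k < j$) from the interplay between the unique increasing maximal chains of the rooted intervals $[\hat{0}, y]$ and $[a_j, y]$.

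The main obstacle throughout is the bookkeeping that couples the single global lexicographic order on maximal chains of $P$ to the root-dependent recursive atom orderings of the intervals $[a_j, \hat{1}]$; it is here that one must be careful about how ties in the atom ordering are broken. In the $(\Leftarrow)$ direction this manifests as choosing the ``old-atom'' labels and the translations of the $\lambda_j$ so that a maximal chain is forced to be non-increasing as soon as it fails to run through the earliest admissible atom, while still leaving a strictly increasing chain available in every rooted interval; in the $(\Rightarrow)$ direction it manifests as the extraction of the atom $z$ in axiom (ii), which is the combinatorial shadow of the existence and uniqueness of an increasing maximal chain in every rooted interval based at $\hat{0}$. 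Once these points are settled, the remaining verifications form a routine, if lengthy, double induction.
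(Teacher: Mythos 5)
The paper does not actually prove this statement: Theorem \ref{theorem:recursive} is quoted from Bj\"orner--Wachs \cite{BjWa1983} and Wachs \cite{Wachs2004}, so your attempt can only be measured against the standard argument from those sources. In outline you have reproduced it: induction on length, labelling the edge $\hat{0} \prec a_j$ by $j$, translating the recursively obtained CL-labellings $\lambda_j$ of the intervals $[a_j,\hat{1}]$ to large values, and forcing a descent on edges $a_j \prec x$ when $x$ lies above an earlier atom; and, conversely, reading an atom ordering off a CL-labelling via the bottom-edge labels. However, there are genuine gaps. First, your prescription ``label the remaining edges of $c$ by $\lambda_i$'' is not well defined: $\lambda_i$ is a chain-edge labelling of $[a_i,\hat{1}]$, and to restrict it to the portion of $c$ above $x$ one must supply a root, i.e.\ a maximal chain of $[a_i,x]$; your chain $c$ runs through $a_j$, not $a_i$, so no root is given and different choices yield different labels. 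The correct construction labels every edge of $c$ above $a_j$ by $\lambda_j$ (the chain $c\setminus\{\hat{0}\}$ is canonically a maximal chain of $[a_j,\hat{1}]$) and overrides only the single edge $a_j\prec x$ when $x$ lies above an earlier atom.

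Second, and more seriously, what you defer is the content of the theorem. In the direction ``recursive atom ordering $\Rightarrow$ CL-shellable'', the assertion that every rooted interval $[\hat{0},y]$ has a unique strictly increasing maximal chain which is lexicographically least is exactly where conditions (i) and (ii) of Definition \ref{def:recursive} must be deployed against your modified labels; saying that these axioms ``are precisely what allow one to increase by a single step from the smallest admissible atom'' restates the goal rather than proving it (one must show the increasing chain of $[a_j,y]$, for $a_j$ the earliest atom below $y$, does not begin with an atom lying above an earlier $a_i$, and that lexicographic minimality survives the relabelling --- this uses the compatibility clause of (i) in an essential way). In the converse direction, the verification of (ii) requires extracting $z$ and $k<j$ by comparing the increasing chains of the rooted intervals $[\hat{0},y]$, $[a_j,y]$ and $[\hat{0},z]$, and one must also handle distinct atoms whose bottom edges carry equal labels; your tie-breaking rule, which compares increasing chains of the full intervals $[a_i,\hat{1}]$, is not obviously the one that makes this argument close, since the lexicographic comparisons that actually arise take place inside $[\hat{0},y]$. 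Until these verifications are carried out, the proposal is a plan in the shape of the Bj\"orner--Wachs proof rather than a proof.
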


In particular, it is known (see \cite[Theorem 4.2.7]{Wachs2004}) that every
geometric lattice admits a recursive atom ordering, and is therefore
CL-shellable.

\section{The Groups $G (r, p, n)$}\label{section:G(r,p,n)}

\subsection{Wreath Products}\label{subsection:wreath}

The following treatment is taken from \cite[{\S}3.2]{LeTa2009}.

Given a group $G$ acting on the set $\Omega \assign \{1, 2, \ldots, n\}$ and
any group $H$, let $B$ denote the direct product of $n$ copies of $H$, and
define an action of $G$ on $B$ as follows. If $g \in G$ and $h = (h_1, h_2,
\ldots, h_n) \in B$, define
\[
g.h \assign (h_{g^{- 1} (1)}, h_{g^{- 1} (2)},
\ldots, h_{g^{- 1} (n)}).
\]
The {\em wreath product} $H$ by $G$, denoted $H \wr G$, is the semidirect
product of $B$ by $G$. Write
elements of $H \wr G$ as $(h ; g)$, with multiplication given by
\[
(h ; g) (h' ; g') = (hg.h' ; gg'),\quad\mbox{for all } g\in G, h\in H.
\]
If $H$ and $G$ are finite groups, the order of $H \wr G$ is $\left| H
\right|^n \left| G \right| .$

Assume now that $H$ is the cyclic group $\mu_m$ of complex $m$-th roots of
unity, for some $m \in \mathbb{N}$. Let $V$ be a complex vector space of
dimension $n$, equipped with positive definite hermitian form $(-, -)$, and
orthonormal basis $e_1, e_2, \ldots, e_n$. The elements of $B$ can be
represented as diagonal transformations of $V$. That is, if $h = (h_1,
h_2, \ldots, h_n) \in B$, set
\[
he_i : = h_i e_i\quad\mbox{for}\quad 
\leqslant i \leqslant n.
\]
Clearly $B$ is a subgroup of $U (V) .$

$G$ also acts on $V$, by permutation of the basis vectors:
\[
ge_i : = e_{g (i)} \quad\mbox{for}\quad 1
\leqslant i \leqslant n.
\]
The actions of $B = (\mu_m^{})^n$ and $G$ combine to
give an action of $\mu_m \wr G$ on $V$:
\[
(h ; g) e_i \assign h_{g (i)} e_{g
(i)}\quad\mbox{for} \quad 1 \leqslant i \leqslant n.
\]
The matrices representing these linear transformations (with respect to the
basis $\{e_1, \ldots, e_n \})$ are monomial (one nonzero entry in each row and
each column) and this representation of $\mu_r \wr G$ is known as the
{\em standard monomial representation}. Note that $\mu_r \wr G$ is in
fact represented as a subgroup of $U (V)$.

With $B = (\mu_r)^n$ and $p$ a divisor of $r$, define the group $A
(r, p, n)$ as follows:
\[
A (r, p, n) \assign \{(\theta_1, \theta_2, \ldots,
\theta_n) \in B \mid (\theta_1 \theta_2 \ldots \theta_n)^{\frac{r}{p}} = 1\}.
\]
The group $G (r, p, n)$ is defined to be the semidirect product of $A (r, p,
n)$ by the symmetric group $\mathrm{Sym}(n)$. The group $G (r, p, n)$ is a
normal subgroup of index $p$ in $\mu_r \wr \tmop{Sym} (n)$, so it can be
represented as a group of monomial linear transformations in the standard
monomial representation. The order of $G (r, p, n)$ is $\frac{r^n n!}{p}$.

 The group $G (r, p, n)$ is defined to be the group
of all $n \times n$ monomial matrices whose non-zero entries are complex $r$-th
roots of unity such that the product of the non-zero entries is an
$\frac{r}{p}$-th root of unity.

The following results about the groups $G (r, p, n)$ are standard:

\begin{proposition}{\em (\cite[Theorem 2.4, Remark 2.5]{Cohen1976}, \cite[Proposition 2.10, Theorems 2.14 and 2.15]{LeTa2009}), \cite[Proposition 2.1.6]{Blair1997}.} Let $V$ be a
  complex vector space of dimension $n$, equipped with positive definite
  hermitian form, and denote the unitary group of the form by $U(V)$. Let $r,p$
  and $n$ be positive integers such that $p$ divides $r$. Then:
  \begin{itemize}
  \item[(i)] $G (r, p, n)$ is a unitary reflection group.
  \item[(ii)] Suppose $n \geq 2$.  If $(r_1, p_1, n)$ and $(r_2, p_2, n)$ are triples of positive
  integers with $p_1 \mid r_1$ and $p_2 \mid r_2$ then $G (r_1, p_1, n)$ and
  $G (r_2, p_2, n)$ are conjugate in U(V) if and only if $(r_1, p_1, n) =
  (r_2, p_2, n)$ or $\{(r_1, p_1, n), \text{$(r_2, p_2, n)\}$} $= $\{(2,
  1, 2), (4, 4, 2)\}$.
\item[(iii)] $G (r, p, n)$ is an irreducible unitary reflection group except when
  $(r, p, n) = (1, 1, n)$ and $n>1$  or $(r, p, n) = (2, 2, 2)$.  The group $G(1, 1, n)$ is the symmetric group on $n$ letters, which splits as a sum of irreducible representations of dimensions $1$ and $n-1$.  The group $G(2, 2, 2                                                                                                                                                                                                                                                                                                                                            )$ is the Klein four group.
  \end{itemize}
\end{proposition}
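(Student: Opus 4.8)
All three parts are classical and are established in the cited references; the line of argument I would follow is the following.

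For (i), the plan is to exhibit enough reflections inside $G(r,p,n)$ to generate it. Fix a primitive $r$-th root of unity $\zeta$. For each pair $i\neq j$ and each $a\in\Z/r\Z$ the monomial involution $s_{i,j}^{(a)}$ sending $e_i\mapsto\zeta^{-a}e_j$, $e_j\mapsto\zeta^{a}e_i$ and fixing the remaining basis vectors has product of nonzero entries equal to $1$, so it lies in $G(r,p,n)$; on the plane $\langle e_i,e_j\rangle$ it has trace $0$ and determinant $-1$, hence eigenvalues $1,-1$, so it is a reflection. Likewise $t\assign\tmop{diag}(\zeta^{p},1,\dots,1)$ lies in $G(r,p,n)$, since $(\zeta^{p})^{r/p}=1$, and it is a reflection of order $r/p$. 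A direct check then shows that these elements generate $G(r,p,n)$: the $s_{i,i+1}^{(0)}$ give a copy of $\tmop{Sym}(n)$, conjugating $t$ by this subgroup produces every single-coordinate diagonal matrix with entry $\zeta^{p}$, and composing $s_{i,j}^{(a)}$ with $s_{i,j}^{(0)}$ yields the matrices $\tmop{diag}(\dots,\zeta^{a},\dots,\zeta^{-a},\dots)$; together these span the diagonal subgroup $A(r,p,n)$, which with $\tmop{Sym}(n)$ spans all of $G(r,p,n)$. Since the generating matrices are monomial with roots of unity as entries, $G(r,p,n)\subseteq U(V)$ as well, which proves (i).

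For (ii), I would use the fact that conjugate subgroups of $U(V)$ share all invariant-theoretic data, in particular the multiset of degrees of their basic invariants, which for $G(r,p,n)$ equals $\{r,2r,\dots,(n-1)r,\,nr/p\}$. A short (if fiddly) case analysis shows that, for $n\geq 2$, this multiset determines $(r,p,n)$ among all triples with $p\mid r$, with the single exception that $(2,1,2)$ and $(4,4,2)$ both give the degrees $\{2,4\}$; if necessary, refining by the codegrees or by the number of reflections of each order removes any residual ambiguity. Hence inequivalent admissible triples (other than that pair) yield non-conjugate groups. For the converse one verifies directly that $G(2,1,2)$ and $G(4,4,2)$ are conjugate in $U(2)$: both are order-$8$ groups realising the dihedral group of order $8$ as a rank-$2$ reflection group, and one writes down an explicit element of $U(2)$ intertwining the two monomial realisations. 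I expect (ii) to be the main obstacle, since ruling out conjugacy needs a genuine invariant rather than an \emph{ad hoc} argument and one must not overlook the sporadic coincidence; this is also precisely the point at which the Shephard--Todd classification is implicitly invoked, so for a self-contained account I would simply cite Cohen and Lehrer--Taylor at this step.

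For (iii), assume $n\geq 2$ (the case $n=1$ being one-dimensional and trivially irreducible). The key observation is that the diagonal subgroup $A(r,p,n)$ separates the coordinate lines of $V$ in all but two cases: calling $\langle e_i\rangle$ and $\langle e_j\rangle$ separated if some $\theta\in A(r,p,n)$ has $\theta_i\neq\theta_j$, one checks, by exhibiting suitable diagonal elements (such as $\tmop{diag}(\zeta^{p},1,\dots,1)$ when $r>p$, or $\tmop{diag}(\zeta,\zeta^{-1},1,\dots,1)$ and its $\tmop{Sym}(n)$-conjugates when $r=p\geq 3$, with a small extra argument when $r=p=2$), that every pair of coordinate lines is separated unless $(r,p)=(1,1)$, so that $A(r,p,n)=\{\id\}$, or $(r,p,n)=(2,2,2)$, so that $A(r,p,n)=\{\pm\id\}$ acts by scalars. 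When every pair is separated, the coordinate lines are distinct isotypic components for the abelian group $A(r,p,n)$, so every $A(r,p,n)$-invariant subspace of $V$ is a sum of coordinate lines; since $\tmop{Sym}(n)\subseteq G(r,p,n)$ permutes these lines transitively, the only $G(r,p,n)$-invariant subspaces are $0$ and $V$, and $G(r,p,n)$ is irreducible. The two exceptions are then handled by inspection: $G(1,1,n)$ is $\tmop{Sym}(n)$ in its degree-$n$ permutation representation, which splits as the trivial module plus the irreducible $(n-1)$-dimensional standard module (of different dimension once $n>1$), while $G(2,2,2)=\{\pm\id,\pm w\}$ with $w$ the coordinate transposition is the Klein four group and $V=\C(e_1+e_2)\oplus\C(e_1-e_2)$ as a $G(2,2,2)$-module. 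This establishes (iii).
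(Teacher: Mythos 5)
The paper itself offers no proof of this proposition: it is stated as standard and delegated to the cited references (Cohen, Lehrer--Taylor, Blair), so your sketch can only be judged on its own merits. Parts (i) and (iii) are correct and follow the usual route: the reflections $s_{i,j}^{(a)}$ together with $t=\tmop{diag}(\zeta^{p},1,\dots,1)$ do generate $G(r,p,n)$ (with the harmless caveat that $t$ is the identity when $r=p$, in which case the $s_{i,j}^{(a)}$ alone suffice), and the separation/isotypic-component argument, with the degenerate cases $(1,1,n)$ and $(2,2,2)$ treated by inspection, is exactly the standard proof of irreducibility.

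Part (ii), however, contains a genuine error as written. It is not true that the degree multiset $\{r,2r,\dots,(n-1)r,nr/p\}$ determines $(r,p,n)$ for $n\ge 2$ with $\{(2,1,2),(4,4,2)\}$ as the single exception: $G(4,1,2)$ and $G(8,4,2)$ both have degrees $\{4,8\}$ and even the same order $32$, yet they are not conjugate (indeed $G(4,1,2)$ has $6$ reflections of order $2$ and $4$ of order $4$, while $G(8,4,2)$ has $10$ reflections, all of order $2$); more generally $G(r,1,2)$ and $G(2r,4,2)$ share degrees for every even $r$. So the degrees alone cannot carry the ``only if'' direction, and your parenthetical fallback (``refine by codegrees or by the number of reflections of each order'') is not an optional tidying step but the actual substance of the argument: one must verify that these finer invariants separate all admissible triples except the one genuine coincidence, which is essentially the content of the results of Cohen and Lehrer--Taylor that the paper cites. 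The converse direction, that $G(2,1,2)$ and $G(4,4,2)$ really are conjugate in $U(2)$, also needs the explicit unitary intertwiner you promise rather than the observation that both are dihedral of order $8$, since abstract isomorphism is weaker than conjugacy in $U(V)$; that part of your plan is fine, but as it stands the case analysis underpinning (ii) does not hold.
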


\subsection{Dowling Lattices}\label{subsection:Dowling}

When $G$ is an imprimitive reflection group, the posets $\mathcal{S}_{\zeta}^V
(\gamma G)$ turn out to be isomorphic to subposets of the well-known
{\em Dowling lattices}. This section provides an introduction to
Dowling lattices. This treatment follows \cite{GoWa2000} closely, with an
adaptation from \cite{Henderson2006}.

Let $\pi = B_1 \mid \ldots \mid B_k$ denote a partition of a set $S$. This
means that $S$ is the disjoint union of the $B_i$, which are called
{\em blocks} of $\pi$. Write $\left| \pi \right|$ for the number of blocks
in $\pi$. 

Let $G$ be a finite group. A {\em $G$-prepartition} is a triple $(I, \pi,
\gamma)$ where $I \subseteq [n] =\{1, \ldots, n\},$ $\pi = B_1 \mid \ldots
\mid B_k$ is a partition of $[n] \setminus I$ and $\gamma$ is a map $B_1 \cup
B_2 \cup \ldots \cup B_k \rightarrow G.$ The subset $I$ is called the
{\em zero block} of the prepartition. For $i = 1, 2, \ldots, k$ the restriction of $\gamma$ to $B_i$ is denoted $\gamma_i$. The pair $(B_i, \gamma_i),$ $i = 1, 2,
\ldots k$ is called a {\em nonzero block}, and can be considered as a subset $\{(a, \gamma_i
(a)) \mid a \in B_i \}$ of $[n] \times G$. Thus, $\gamma$ may be thought of
as a labelling of the elements of the nonzero blocks by elements of $G$. Note that the zero block is allowed to be empty, but that nonzero blocks are,
by definition, nonempty.

There is a right action of $G$ of $[n] \times G$ given by $(i, h) \cdot g =
(i, h g)$. This action extends to an action on the set of all subsets of
$[n] \times G$ by $S \cdot g =\{x \cdot g \mid x \in S\}.$ There is an equivalence relation on the power set of $[n] \times G$ given by setting two subsets $S_1$ and $S_2$ of $[n] \times G$ to be equivalent if they
lie in the same $G$-orbit under this action. 

There is also an equivalence relation on the set of $G$-prepartitions.
Two $G$-prepartitions are equivalent if their zero blocks
are equal and there is a one-to-one correspondence between their nonzero
blocks such that corresponding blocks are equivalent. 

Define a {\em $G$-partition} to be an equivalence class of $G$-prepartitions.  If $(I, \pi, \gamma)$ is a member of this equivalence class, the corresponding $G$-partition is denoted $\overline{(I,
\pi, \gamma)}$. The equivalence class of a nonzero block $(B_i, \gamma_i)$
of a $G$-prepartition is called a {\em nonzero $G$-block} of the
$G$-partition, and is denoted $\overline{(B_i, \gamma_i)}$. The
zero block $I$ of the $G$-prepartition is called the
 {\em zero $G$-block} of the $G$-partition. For ease of notation we write $\bar{I} = I.$

The elements of the Dowling lattice $Q_n (G)$ are the set of $G$-partitions.  The partial order is determined by the covering relation described as follows. First
define a {\em merge} of two $G$-blocks. The merge of the zero
$G$-block $I$ with a nonzero $G$-block
$\overline{(B_i, \gamma_i)}$ is the zero block $I \cup B_i \subseteq [n]$.
There are $\left|G\right|$ ways to merge two nonzero blocks $\overline{(B_i,
\gamma_i)}$ and $\overline{(B_j, \gamma_j)},$ each of which has the form
$\overline{(B_i, \gamma_i) \cup ((B_j, \gamma_j) \cdot g)}$ for some $g \in
G$ where the union is of two subsets of $[n] \times G$. Note that
once equivalence class representatives are fixed
for the $G$-blocks being merged, the merge is defined uniquely by $g$. The covering relation can now be simply defined: in $Q_n (G)$, the $G$-partition
$\overline{(I, \pi, \gamma)}$ is covered by $\overline{(I', \pi', \gamma')}$
if $\overline{(I', \pi', \gamma')}$ can be obtained from $\overline{(I, \pi,
\gamma)}$ by merging two $G$-blocks of $\overline{(I, \pi, \gamma)}$. The
minimum element $\hat{0}$ of $Q_n (G)$ is the $G$-partition in which
$I = \emptyset$, and $\pi$ partitions the set $\{1, \ldots, n\}$ into
singleton subsets (so there is no choice for $\gamma$, up to equivalence). The
maximum element $\hat{1}$ of $Q_n (G)$ is the $G$-partition in which
$I =\{1, \ldots, n\}$, $\pi$ is empty, and $\gamma$ is trivial. 

There is a left action of $G \wr \tmop{Sym} (n)$ on $Q_n (G)$ defined as
follows. Suppose $(i, a) \in [n] \times G$, $g = (g_1, \ldots, g_n) \in G^n$ and $\sigma \in \tmop{Sym} (n)$. Thus $(g ; \sigma) \in G \wr
\tmop{Sym} (n)$. Define
\[
(g ; \sigma) \cdot (i, a) = (\sigma (i), g_{\sigma
(i)} a).
\]

This induces an action of $G \wr \tmop{Sym} (n)$ on subsets of $[n] \times G$
and therefore on $G$-prepartitions, by letting an element $(g ; \sigma) \in G
\wr \tmop{Sym} (n)$ act on each nonzero block viewed as a subset of $[n]
\times G$, and on the zero block $I =\{i_1, i_2, \ldots, i_k \}$ by $(g ;
\sigma) \cdot \{i_1, i_2, \ldots, i_k \}=\{\sigma (i_1), \sigma (i_2), \ldots,
\sigma (i_k)\}.$ Notice that the left action of $G \wr \tmop{Sym} (n)$ on
nonzero blocks commutes with the right action of $G$. Therefore the left
action of $G \wr \tmop{Sym} (n)$ on $G$-prepartitions respects the equivalence
relation on $G$-prepartitions, and hence induces an action on $G$-partitions. This action preserves the partial order on $Q_n (G)$.

The Dowling lattice  $Q_n (G)$ is ranked, with rank function given by
\[
\tmop{rk}(\overline{(I, \pi, \gamma)}) = n -
\left| \pi \right|.
\]

Thus the rank of the whole poset $Q_n (G)$ is $n$.

It is known (see \cite[Theorem 3]{Dowling1973}) that $Q_n (G)$ is a geometric
lattice, and therefore Cohen-Macaulay.

Now specialise to the case where $G$ is the cyclic group $\mu_r$, identified
with $\mathbb{Z}/ r\mathbb{Z}$. Let $Q_n (r)$ denote the Dowling lattice
$Q_n (\mu_r) .$ When $G$ is the trivial group $\mu_1$ the Dowling
lattice $Q_n (1)$ is the ordinary partition lattice $\Pi_{n + 1}$ on $n + 1$
elements, and $\mu_1 \wr \tmop{Sym} (n)$ is just the symmetric group
$\tmop{Sym} (n)$. The action of $\mu_1 \wr \tmop{Sym} (n)$ on $Q_n (1)$ is
isomorphic to the restriction of $\tmop{Sym} (n + 1)$ on $\Pi_{n + 1}$
to $\tmop{Sym} (n)$. When $G = \mu_2$, the Dowling lattice $Q_n (2)$ is the
signed partition lattice $\Pi_n^B$. The wreath product $\mu_2 \wr \tmop{Sym}
(n)$ is the hyperoctahedral group. The lattice $Q_n (1)$ is the intersection
lattice for the Coxeter group of type $A_n$, while $Q_n (2)$ is the
intersection lattice for the Coxeter group of type $B_n$.

It is well known (see Proposition \ref{prop:hyperplanearrangement}) that the
Dowling lattice $Q_n (r)$ is the intersection lattice for the imprimitive
reflection group $G (r, 1, n)$.

Several subposets of $Q_n (r)$ occur in the work which follows:

\begin{definition}\label{definition:Q_n(r,J)}
  Let $J \subseteq \{0, 1, \ldots, n\}$.  The poset $Q_n (r, J)$ is the subposet of $Q_n (r)$ whose  elements $\overline{(I, \pi, \gamma)}$ satisfy $\left| I \right| \nin J \subseteq
  \{0, 1, \ldots, n\}$, together with the unique minimal element of $Q_n (r)$.
\end{definition}

Note that this definition applies more generally to the Dowling lattice
$Q_n (G)$.

\begin{definition}\label{definition:(Q_n(r,d)}
  The poset $Q_n (r, d)$ is the subposet of $Q_n (r)$ whose
  nonzero blocks have length divisible by $d$, together with the unique
  minimal element of $Q_n (r)$ (the `$d$-divisible Dowling lattice').
  As a matter of convention we allow the case $d = 1$, in which case $Q_n
  (r, d) = Q_n (r)$.
\end{definition}

\begin{definition}\label{definition:Q_n(r,d,k)}
  The poset $Q_n (r, d, k)$ is the subposet of $Q_n (r, d)$
  consisting of elements $\overline{(I, \pi, \gamma)}$ such that for each
  nonzero block $\overline{(B_i, \gamma_i)}$, $\left| \{a \in B_i \mid \gamma
  (a) \equiv j \;(\tmop{mod} \;k)\} \right|$ is independent of $j$, together with the
  minimum element of $Q_n(r, d)$ (the `$k$-evenly
  coloured, $d$-divisible Dowling lattice'). Note that we require
  $k \mid r$ and will also assume that $k \mid d$. 
\end{definition}

\begin{definition}
  Let $J \subseteq \{0, 1, \ldots, n\}$.  The poset $Q_n (r, d, k, J)$ is the subposet\label{definition:Q_n(r,d,k,J)} of $Q_n
  (r, d, k)$ whose elements $\overline{(I, \pi, \gamma)}$ satisfy $\left| I \right|
  \nin J \subseteq \{0, 1, \ldots, n\}$, together with the unique minimal element of $Q_n (r, d, k)$.
\end{definition}

We would like to show that these posets are Cohen-Macaulay over $\mathbb{Z}$.  It is well-known (see \cite{BjWa1983},\cite[Theorem 4.2.2]{Wachs2004}) that it suffices to show that the posets in question have a recursive atom ordering:  

\begin{definition}\label{def:recursive}{\cite[Definition 4.2.1]{Wachs2004}}
  A bounded poset $P$ is said to admit a
  recursive atom ordering if its length $l (P)$ is 1, or if $l (P) > 1$ and
  there is an ordering $a_1, a_2, \ldots, a_t$ of the atoms of $P$ that
  satisfies
  \begin{itemize}
\item[(i)] For all $j = 1, 2, \ldots, t$ the interval $[a_j, \hat{1}]$
  admits a recursive atom ordering in which the atoms of $[a_j, \hat{1}]$ that
  belong to $[a_{i,} \hat{1}]$, for some $i < j$, come first.
\item[(ii)] For all $i < j$, if $a_i, a_j < y$ then there is a $k <
  j$ and an atom $z$ of $[a_j, \hat{1}]$ such that $a_k < z \leqslant y.$
\end{itemize}
\end{definition}

The following proposition regarding the topological properties of some of
these posets appears in \cite{Henderson2006}. Note that the result looks slightly
different because we have defined Dowling lattices differently.

\begin{proposition}{\cite[Proposition 1.6]{Henderson2006}}\label{proposition:Qnrd}
   The poset $Q_n (r, d)$ is a pure lattice with a recursive atom
  ordering. Its rank function is
\[
\tmop{rk}(\overline{(I, \pi, \gamma)}) = \left\{\begin{array}{ll} 0,&
  \mbox{if $\overline{(I, \pi, \gamma)} = \hat{0}$}\\
\lfloor \frac{n}{d} \rfloor + 1 - \left| \pi \right|,&\mbox{otherwise.}\end{array}\right.
 \]
 \end{proposition}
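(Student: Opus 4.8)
The plan is to verify the two assertions — that $Q_n(r,d)$ is a pure lattice, and that it admits a recursive atom ordering — more or less directly, working from the combinatorial description of $Q_n(r)$ as a geometric lattice together with the $d$-divisibility constraint. Since this is quoted from \cite{Henderson2006}, the cleanest route is to follow that reference's argument, adapting for the different normalisation of Dowling lattices used here; but I would want to present a self-contained sketch.

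First I would settle the rank function and purity. The key observation is that in $Q_n(r)$ the rank of a $G$-partition $\overline{(I,\pi,\gamma)}$ is $n-|\pi|$, and the nonzero blocks have sizes summing to $n-|I|$. Inside $Q_n(r,d)$, every nonzero block has size a multiple of $d$, so if there are $|\pi|$ nonzero blocks then $n-|I| \geq d\,|\pi|$, i.e. $|\pi| \leq (n-|I|)/d \leq \lfloor n/d\rfloor$. Climbing in the poset, a cover either merges two nonzero blocks (reducing $|\pi|$ by one, which is legal for $d$-divisibility since the merged block still has size divisible by $d$) or merges a nonzero block into the zero block (also reducing $|\pi|$ by one). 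The maximum $\hat 1$ has $|\pi|=0$. So above $\hat 0$, the poset is ranked by $|\pi|$, and I would normalise so that an element just above $\hat 0$ (a single nonzero $d$-block, or a zero block of size $d$) has rank $1$: since the maximal value of $|\pi|$ is $\lfloor n/d\rfloor$, the formula $\mathrm{rk} = \lfloor n/d\rfloor + 1 - |\pi|$ follows, with $\hat 0$ assigned rank $0$ separately. Purity then amounts to checking that every maximal chain from an atom to $\hat 1$ has the same length: each cover step drops $|\pi|$ by exactly one, and maximal chains start at $|\pi| = \lfloor n/d\rfloor$ (which is always achievable: pack $\lfloor n/d\rfloor$ disjoint $d$-blocks and dump the remaining $n - d\lfloor n/d\rfloor$ points into the zero block) and end at $|\pi| = 0$, so all maximal chains above an atom have length $\lfloor n/d\rfloor$. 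I should check the atoms themselves are all of the same "type" in the ranked part, which they are by the rank formula. Also that $Q_n(r,d)$ is a lattice: it is a meet-subsemilattice of the geometric lattice $Q_n(r)$ closed under the relevant joins (the join of two $d$-divisible partitions, computed in $Q_n(r)$, merges blocks and hence stays $d$-divisible), with a top element, so it is a lattice; alternatively invoke that a finite meet-semilattice with $\hat 1$ is a lattice.

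The main work — and the expected obstacle — is exhibiting the recursive atom ordering, i.e. verifying conditions (i) and (ii) of Definition \ref{def:recursive}. The natural idea is to order atoms so that atoms involving the zero block come in a controlled position relative to the purely-nonzero atoms, and so that within each family the ordering is compatible with the ordering one would use on a smaller Dowling lattice; then set up the recursion on $n$ (and on $d$). Condition (ii) is the Björner–Wachs exchange-type condition, and for geometric-lattice-like posets it usually follows from a suitable semimodularity or exchange property of the join; here I would need to check that whenever two atoms $a_i, a_j$ lie below a common $y$, there is an earlier atom $a_k$ and an atom $z$ of $[a_j,\hat 1]$ with $a_k < z \leq y$ — the delicate point being that $z$ must itself satisfy $d$-divisibility, so the "exchange" has to be performed within the constrained poset, and one must be careful that merging to restore $d$-divisibility doesn't overshoot $y$. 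Condition (i) requires identifying the upper interval $[a_j,\hat 1]$ with (a product of) smaller posets of the same type — an interval above an atom of $Q_n(r,d)$ should look like $Q_{n-d}(r,d)$ or a similar $d$-divisible Dowling lattice on fewer points (with the zero-block size shifted), so that the recursive hypothesis applies — and then arranging the induced atom ordering on that interval so that atoms lying above some earlier $a_i$ come first. I expect this bookkeeping — tracking how the zero block and the count $\lfloor n/d \rfloor$ change under passing to an upper interval — to be the fiddly heart of the proof, and I would lean on the explicit atom ordering from \cite[Proposition 1.6]{Henderson2006} to organise it rather than reinventing it.
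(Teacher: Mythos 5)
Your handling of the easy parts is essentially sound and close to the paper: $Q_n(r,d)\setminus\{\hat 0\}$ is closed under going up in $Q_n(r)$ (any element above a $d$-divisible one has nonzero blocks that are unions of $d$-divisible nonzero blocks), i.e.\ it is an upper order ideal, which gives the lattice property once $\hat 0$ is adjoined, and purity plus the rank formula follow because all atoms sit at rank $n-\lfloor n/d\rfloor$ of $Q_n(r)$. Two small corrections there: $Q_n(r,d)$ is \emph{not} a meet-subsemilattice of $Q_n(r)$ (meets can destroy $d$-divisibility --- that is precisely why $\hat 0$ must be adjoined), and your parenthetical description of the elements covering $\hat 0$ is wrong: for $d>1$ an atom cannot leave the remaining points as singleton nonzero blocks, so the atoms are the elements with zero block of size $n-d\lfloor n/d\rfloor$ and $\lfloor n/d\rfloor$ nonzero blocks each of size exactly $d$.

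The genuine gap is in the recursive atom ordering, which is the substance of the proposition. For condition (i) of Definition \ref{def:recursive} you propose to identify $[a_j,\hat 1]$ with a smaller $d$-divisible Dowling lattice such as $Q_{n-d}(r,d)$ and recurse on $n$ and $d$; this identification is false. Above any element $x\neq\hat 0$ one merges whole blocks with no further divisibility constraint, so $[x,\hat 1]\cong Q_{|\pi|}(r)$, the \emph{full} Dowling lattice on the set of blocks. That is exactly what makes the paper's proof work: $Q_{|\pi|}(r)$ is a geometric lattice, hence admits recursive atom orderings compatible with any prescribed initial set of atoms, so (i) comes for free and no recursion within the $d$-divisible family is needed (nor available, since the intervals are not of that form). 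For condition (ii) you state the required exchange property but do not exhibit an ordering or verify it, deferring to \cite{Henderson2006}; since the whole point of the proof in this paper is to redo the argument for the present normalisation of $Q_n(r)$, this is the missing content. The paper's verification is concrete: order atoms lexicographically by the word $\overline{I}\mid\overline{B_1}\mid\cdots\mid\overline{B_{\lfloor n/d\rfloor}}$ obtained by listing the zero block and the blocks in increasing order (ties in the labellings broken by a fixed arbitrary order), and then show by a switching argument --- if some block $B_i'$ of $y$ contains consecutive blocks $B_{s_q},B_{s_{q+1}}$ of $a_j$ with $\max(B_{s_q})>\min(B_{s_{q+1}})$, swap those two elements to produce an earlier atom $a_k$ sharing a cover $z\le y$ with $a_j$ --- that the earliest atom below any $y$ is the canonical one, which is precisely condition (ii). Without the correct description of the upper intervals and without this explicit ordering-plus-exchange step, your proposal does not yet constitute a proof.
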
  
  
  \begin{proof}
    A proof is given for completeness, using the definition of the Dowling
    lattice given here, rather than that in \cite{Henderson2006}.

    The poset $Q_n (r, d) \setminus \{ \hat{0} \}$ is an upper order ideal of
    $Q_n (r)$, so $Q_n (r, d)$ is a lattice. The atoms of $Q_n (r, d)$ are
    those $\overline{(I, \pi, \gamma)}$ such that $\left| B_i \right| = d$ for each
    block $B_i$ of $\pi$, and $\left| \pi \right| = \lfloor \frac{n}{d} \rfloor .$
    These all have rank $(n - \lfloor \frac{n}{d} \rfloor)$ as elements of
    $Q_n (r)$, so $Q_n (r, d)$ is pure with the required rank function. To
    find a recursive atom ordering, note that for any element $ \overline{(I,
    \pi, \gamma)} \in Q_n (r, d) \setminus \{ \hat{0} \}$, the principal
    upper order ideal $[ \overline{(I, \pi, \gamma)}, \widehat{1}]$ is
    isomorphic to $Q_{\left| \pi \right|} (r)$, which is a geometric lattice. Thus $[ \overline{(I, \pi, \gamma)}, \widehat{1}]$ is CL-shellable, and
    hence admits a recursive atom ordering. Hence (see Definition
    \ref{def:recursive}) it suffices to check that the atoms of $Q_n (r, d)$
    can be ordered $a_1, a_2, \ldots, a_t$ in such a way that for all $i < j$
    and $y \in Q_n (r, d) \backslash \{ \hat{0} \}$, if $a_i, a_j < y$ then there is a $k < j$ and an atom $z$ of $[a_j, \hat{1}]$ such that $a_k < z \leqslant y.$

    Such an ordering can be described as follows (see \cite[Exercise 4.3.6(a)]{Wachs2004} and \cite[Proposition 1.6]{Henderson2004}). Let $a = \overline{(I,
    \pi, \gamma)}$ be an atom of $Q_n (r, d)$. This atom is characterised by
    its zero block $I$ of size $n - d \lfloor \frac{n}{d} \rfloor$ and
    $\lfloor \frac{n}{d} \rfloor$ nonzero blocks $(B_{1,} \gamma_1), \ldots,
    (B_{\lfloor \frac{n}{d} \rfloor}, \gamma_{\lfloor \frac{n}{d} \rfloor})$,
    where each subset $B_i$ of $[n] \setminus I$ has $d$ elements. List the
    elements of $I$ and of each block in increasing order, ignoring the labels
    determined by $\gamma$. Call these lists $\overline{I}, \overline{B_1},
    \ldots, \overline{B_{\lfloor \frac{n}{d} \rfloor}} .$ Assume that the
    ordering of $\overline{B_1}, \ldots, \overline{B_{\lfloor \frac{n}{d}
    \rfloor}} $ themselves is determined by the\, smallest element in each of
    these lists. Form the word $\alpha = \overline{I} \mid \overline{B_1}
    \mid \cdots \mid \overline{B_{\lfloor \frac{n}{d} \rfloor}}$. This
    process associates a word of length $n$ to each atom $a$.

    As an example, suppose $n = 8$, $r = 2$, $d = 3$. Since there are only two
    possible labelings for each element, we can use the presence or absence
    of a bar ($\overline{\;\;}$) above an element to indicate this labelling. Use a vertical line $(\;\mid\;)$ to separate nonzero blocks, and a double line
    $(\;\parallel\;)$ to separate the zero block from the nonzero blocks. Thus,
    a typical atom $a$ might be $a = 1 \bar{4} 6 \mid \bar{2} 78
    \parallel 35$. According to the above definition, the word $\alpha$
    associated with $a$ is $\alpha = 35146278.$

    Now order the atoms of $Q_n (r, d)$ by lexicographic order of these
    words. It is possible that different atoms may produce the same word. For example, the atom $a' = 146 \mid 2 \bar{7} 8 \parallel 35$ also has
    the word $\alpha = 35146278$ associated to it. Within atoms with the
    same word, choose an arbitrary ordering of all $G$-blocks of size $d$ with
    underlying set $B$, for each $d$-element subset $B$ of $[n]$. Apply this
    ordering first to $(B_1, \gamma_1),$ then to $(B_2, \gamma_2)$, and so on.
    Thus the dependence of the ordering on $\gamma$ is arbitrary.
    
Now suppose that $a_j = \overline{(I, \pi, \gamma)}$, $y = \overline{(I',
    \pi', \gamma')}$, and that $a_j < y.$ Write the nonzero blocks of $a_j$
    as $(B_{1,} \gamma_1), \ldots, (B_{\lfloor \frac{n}{d} \rfloor},
    \gamma_{\lfloor \frac{n}{d} \rfloor})$, and those of $y$ as $(B_1',
    \gamma_1'), \ldots, (B_k', \gamma_l')$ for some $l$. Suppose that
    there is no $k$ such that there is a common cover $z$ of both $a_k$ and
    $a_j$, and that $z \leqslant y.$ It will suffice to show that $a_j$ is
    the earliest atom which is less than $y$. Suppose $B'_i$ is any part of
    $\pi'$. Then $B_i' = B_{s_1} \cup B_{s_2} \cup \cdots \cup B_{s_t}$ for
    some $s_1 < s_2 < \cdots < s_t$. Suppose that for some $q$, $a = \max
    (B_{s_q}) > \min (B_{s_{q + 1}}) = b.$ Consider the atom $a_k  =
    \overline{(I'', \pi'', \gamma'')}$ defined by
\begin{align*}
I'' &= I\\
B_{s_q}'' &= B_{s_q} + b - a \quad\mbox{(that
    is, include $b$ in $B_{s_q}''$ but exclude $a$)}\\
B_{s_{q + 1}}'' &= B_{s_{q + 1}} + a- b\\
B_j'' &= B_j\quad\mbox{for $j \neq s_q, s_{q +1}$}\\
\gamma'' &= \gamma
\end{align*}
Then $a_k$ occurs earlier in the ordering than $a_j$, and their join $z$
    is a cover of both $a_k$ and $a_j$ which is $\leqslant y$. This
    contradicts the assumption on $a_j$. As a result, it must be the case
    that $\max (B_{s_i}) < \min (B_{s_{i + 1}})$. Hence the parts of $\pi$
    contained in $B_i'$ are simply those obtained by ordering the elements of
    $B_i'$ and dividing the list into $d$-element subsets. The labelling on
    each of these parts is determined by the labelling on $B_i'$. A similar
    argument shows that $I$ must consist of the smallest $n - d \lfloor
    \frac{n}{d} \rfloor$ elements of $I'$, and that the parts of $\pi$
    contained in $I'$ can be found by listing the remaining elements of $I'$
    in ascending order, and dividing this list into $d$-element subsets. The
    labelling on each of these parts is determined by choosing the smallest
    element in the (fixed, but arbitrary) ordering of all $G$-blocks of size
    $d$, for each $d$-element subset $B$ of $[n]$. From the construction,
    it is clear that this $a_j$ is the smallest atom less than or equal to
    $y$. This completes the proof.\end{proof}
  
  A similar result holds for the poset $Q_n (r, d, k)$:
  
  \begin{proposition}\label{proposition:Qnrdk}
The poset $Q_n (r, d, k)$ is a pure lattice with
    a recursive atom ordering. Its rank function is
\[
\tmop{rk}(\overline{(I, \pi, \gamma)}) = \left\{\begin{array}{ll}0,&\mbox{if $\overline{(I, \pi, \gamma)}= \hat{0}$}\\
\lfloor \frac{n}{d} \rfloor + 1 - \left| \pi \right|,&\mbox{otherwise.}\end{array}\right.
\]
\end{proposition}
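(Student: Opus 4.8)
The plan is to mimic the proof of Proposition~\ref{proposition:Qnrd} as closely as possible, treating the extra ``$k$-evenly coloured'' requirement as a mild perturbation of the $d$-divisible case. Two observations make the colouring condition harmless at the level of intervals. First, if $\overline{(B_i,\gamma_i)}$ and $\overline{(B_j,\gamma_j)}$ are $k$-evenly coloured nonzero $\mu_r$-blocks then, for \emph{every} twist $g\in\mu_r$, the merge $\overline{(B_i,\gamma_i)\cup((B_j,\gamma_j)\cdot g)}$ is again $k$-evenly coloured, since the multiplicity of colour $c\bmod k$ in it equals $|B_i|/k+|B_j|/k$, independent of $c$ (using $k\mid d$, which divides $|B_i|$ and $|B_j|$); merging a nonzero block into the zero block, or any merge along a chain of $Q_n(r,d,k)\setminus\{\hat{0}\}$, conflicts with neither $d$-divisibility nor even colouring. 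Hence $Q_n(r,d,k)\setminus\{\hat{0}\}$ is an upper order ideal of $Q_n(r,d)\setminus\{\hat{0}\}$, so $Q_n(r,d,k)$ is a lattice; its atoms are precisely the $\overline{(I,\pi,\gamma)}$ all of whose nonzero blocks have size $d$ and are $k$-evenly coloured, with $|\pi|=\lfloor n/d\rfloor$, and these all have rank $n-\lfloor n/d\rfloor$ in $Q_n(r)$, which yields purity and the claimed rank function. Second, for every $\overline{(I,\pi,\gamma)}\neq\hat{0}$ the interval $[\overline{(I,\pi,\gamma)},\hat{1}]$ is isomorphic to the geometric lattice $Q_{|\pi|}(r)$ (above $\overline{(I,\pi,\gamma)}$ one only ever merges whole blocks, and by the first observation no such merge is obstructed), hence admits a recursive atom ordering. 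By Definition~\ref{def:recursive} it therefore remains only to order the atoms of $Q_n(r,d,k)$ so that condition (ii) holds.

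For the atoms I would adapt the lexicographic ordering of Proposition~\ref{proposition:Qnrd}, but with a \emph{colour-refined} word: associate to $\overline{(I,\pi,\gamma)}$ the word $\overline{I}\mid\overline{B_1}\mid\cdots\mid\overline{B_{\lfloor n/d\rfloor}}$ in which $\overline{I}$ lists $I$ increasingly, each $\overline{B_i}$ lists $B_i$ sorted first by colour $\bmod\,k$ and then by value, and the nonzero blocks are arranged by the least value in their colour class $0$; order atoms lexicographically, with ties broken by a fixed arbitrary linear order on the $k$-evenly coloured $\mu_r$-blocks of size $d$ on each $d$-subset of $[n]$. To $y=\overline{(I',\pi',\gamma')}$ attach the canonical atom $a(y)$ whose zero block is the $n-d\lfloor n/d\rfloor$ smallest elements of $I'$, which on each nonzero block $B_i'$ of $y$ uses the partition got by sorting each colour class of $B_i'$ separately and cutting it into consecutive runs of $d/k$ elements (each resulting $d$-block being automatically $k$-evenly coloured) with labels inherited from $\gamma'$, and which on the remaining elements of $I'$ uses the sort-and-cut partition with tie-break-minimal $k$-even labelling. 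One checks $a(y)\le y$, that $a(y)$ is the lexicographically least atom below $y$, and then --- just as in the cited proof --- that any atom $a_j\le y$ with $a_j\neq a(y)$ admits a strictly earlier atom $a_k$ whose join $z$ with $a_j$ covers $a_j$ and satisfies $z\le y$: one isolates a single inversion witnessing $a_j\neq a(y)$, repairs it by moving one element between two blocks (or, when only labels are wrong, re-labelling one block), and takes $z$ to be the block-merge of $a_j$ prescribed by $\gamma'$.

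The one genuinely new obstacle is to ensure the repaired atom $a_k$ still lies in $Q_n(r,d,k)$, i.e. that its altered blocks stay $k$-evenly coloured, and I would handle this by cases. If the two blocks being corrected lie inside a nonzero block $B_i'$ of $y$, their labels are dictated by $\gamma'$ and cannot be changed; but $a_j$ being non-canonical on $B_i'$ forces some colour class of $B_i'$ to fail to be cut into consecutive runs, which produces an inversion between two elements of the \emph{same} colour class, and swapping two elements of equal colour preserves all colour multiplicities, hence $k$-even colouring, while the colour-refined word strictly decreases. If instead the affected blocks lie inside the zero block of $y$, their labels are unconstrained, so one performs the plain positional swap (or in-place relabelling) and re-colours the altered $d$-blocks $k$-evenly, the word decreasing because a smaller element enters the zero block, or because the tie-break-minimal labelling is reached. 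Verifying in each case that $z$ covers $a_j$ with $z\le y$, and that iterating these repairs terminates at $a(y)$ (so condition (ii) holds vacuously once $a_j=a(y)$), is then routine and strictly parallel to Proposition~\ref{proposition:Qnrd}. The degenerate case $d=1$ forces $k=1$ and $Q_n(r,1,1)=Q_n(r)$, which is geometric, so nothing extra is required there.
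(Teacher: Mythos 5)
Your proposal is correct and follows essentially the same route as the paper: lattice and purity via the observation that $Q_n(r,d,k)\setminus\{\hat{0}\}$ is an upper order ideal whose atoms are the $k$-evenly coloured atoms of $Q_n(r,d)$, intervals $[x,\hat{1}]\cong Q_{|\pi|}(r)$, and then the lexicographic atom ordering of Proposition~\ref{proposition:Qnrd} with the switching repair performed separately within each colour class, the canonical atom below $y$ being obtained by cutting each colour class of each block $B_i'$ into consecutive runs of $d/k$ elements. The only deviation is your colour-refined word (and the block ordering by colour-class-$0$ minima), a cosmetic variant of the paper's value-sorted word that changes nothing essential in the argument.
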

    \begin{proof}
      $Q_n (r, d, k) \setminus \{ \hat{0} \}$ is an upper order ideal of $Q_n (r, d_{})$, generated by those atoms of $Q_n
      (r, d)$ which are $k$-evenly coloured. Hence
      $Q_n (r, d, k)$ is a lattice. The atoms of $Q_n (r, d, k)$ are precisely the $k$-evenly coloured atoms of $Q_n (r,
      d)$. These all have the same rank, and so $Q_n (r, d,
      k)$ is pure, with the same rank function as $Q_n (r, d)$.
      
To see that $Q_n (r, d, k)$ has a recursive atom ordering,
      use the same lexicographic ordering described in Proposition
      \ref{proposition:Qnrd}. As in the proof of Proposition
      \ref{proposition:Qnrd}, suppose that $a_j = \overline{(I, \pi, \gamma)}$, $y = \overline{(I', \pi', \gamma')}$, and that $a_j < y.$ Write the
      nonzero blocks of $a_j$ as $(B_1, \gamma_1), \ldots, (B_{\lfloor
      \frac{n}{d} \rfloor}, \gamma_{\lfloor \frac{n}{d} \rfloor})$, and those
      of $y$ as $(B_1', \gamma_1'), \ldots, (B_k', \gamma_l')$ for some $l$.

      Suppose that there is no $k$ such that there is a common cover $z$
      of both $a_k$ and $a_j$, and that $z \leqslant y.$ It suffices to show
      that $a_j$ is the earliest atom which is less than $y$. Suppose $B_i'$
      is a part of $\pi'$, and that $B_i' = B_{s_1} \cup \cdots \cup B_{s_t}$
      for some $s_! < s_2 < \cdots < s_{t.}$ A similar argument to that used
      in Proposition \ref{proposition:Qnrd} shows that the parts $B_{s_j}$ of
      $\pi$ contained in $B_i'$ are obtained as follows. Suppose $B_i' = C_1
      \cup \cdots \cup C_k$, where for $1 \leqslant j \leqslant k$ the
      elements in $C_j$ all have the same colouring. Order each of the sets
      $C_i$ by lexicographic order. Then $B_{s_1}$ contains the smallest
      $\frac{d}{k}$ elements of each of the sets $C_i$, $B_{s_2}$ contains the
      next smallest $\frac{d}{k}$ elements of each of the $C_i$, and so on. \
      The labelling on each of these parts is determined by the labelling on
      $B_i'$. Similarly, the zero block $I$ consists of the
      smallest $n - d \lfloor \frac{n}{d} \rfloor$ elements of $I'$, and the
      parts of $\pi$ contained in $I'$ can be found by listing the remaining
      elements of $I'$ in ascending order, and dividing this list into
      $d$-element subsets. The labelling on each of these parts is
      determined by choosing the smallest element in the (fixed, but
      arbitrary) ordering of all $G$-blocks of side $d$, for
      each $d$-element subset $B$ of $[n]$. The argument
      is the same as in the proof of Proposition \ref{proposition:Qnrd},
      except that we can only perform the switching algorithm within elements
      of the same colour in each block $B_i'$ of $\pi'$, and we perform this
      algorithm for each colour separately. It is clear that $a_j$ is the
      smallest atom less than or equal to $y$.
    \end{proof}

In addition, in the case $n \equiv 0 \;(\tmop{mod}\; d)$:

\begin{proposition}
      \label{proposition:Qnrdk0} If $n \equiv 0\; (\tmop{mod} \;d)$ then
      $Q_n (r, d, k, \{0\})$ is a pure lattice with recursive atom ordering.
      Its rank function is given by 
      \[
      \tmop{rk}(\overline{(I, \pi, \gamma)}) =\left\{\begin{array}{ll} 0,&\mbox{
     if $\overline{(I, \pi, \gamma)}= \hat{0}$}\\
\lfloor \frac{n}{d} \rfloor - \left| \pi \right|,&\mbox{otherwise.}\end{array}\right.
\]
\end{proposition}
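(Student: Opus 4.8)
The plan is to follow the proofs of Propositions~\ref{proposition:Qnrd} and~\ref{proposition:Qnrdk} almost verbatim, the one genuinely new feature being that the zero block of an atom is now nonempty. First I would record the basic structure. Since $n\equiv 0\pmod d$, the zero block of \emph{every} element of $Q_n(r,d,k)$ has cardinality divisible by $d$: following the description of a general element in the proof of Proposition~\ref{proposition:Qnrd}, its zero block is obtained from the zero block of an atom, of size $n-d\lfloor n/d\rfloor=0$, by adjoining a union of nonzero blocks, each of size a multiple of $d$. Hence $Q_n(r,d,k,\{0\})\setminus\{\hat 0\}$ is precisely the set of elements of $Q_n(r,d,k)$ with $|I|\geqslant d$, which is an upper order ideal of $Q_n(r,d,k)$; since $\hat 1$ has $|I|=n\neq 0$, the poset $Q_n(r,d,k,\{0\})$ is bounded, and it is a lattice because joins computed in $Q_n(r,d,k)$ stay in the ideal. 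Its atoms are exactly the $\overline{(I,\pi,\gamma)}$ with $|I|=d$ and $\lfloor n/d\rfloor-1$ nonzero blocks, each of size $d$: such an element covers $\hat 0$ (any $x$ with $\hat 0<x<a$ would have $I_x=I_a$ and nonzero blocks properly refining those of $a$, impossible since the blocks of $a$ already have the minimal size $d$), and every element $>\hat 0$ lies above such an atom (split the zero block into a $d$-subset together with a union of $d$-subsets, split each $k$-evenly coloured block of size $md$ into $m$ $k$-evenly coloured $d$-blocks using $k\mid d$, and put any $k$-even colourings on the $d$-blocks carved out of the zero block). These atoms all have rank $2$ in $Q_n(r,d,k)$, so by purity of $Q_n(r,d,k)$ (Proposition~\ref{proposition:Qnrdk}) and the identity $\tmop{rk}_{Q_n(r,d,k)}(\hat 1)=\lfloor n/d\rfloor+1$, every maximal chain of $Q_n(r,d,k,\{0\})$ has length $1+(\lfloor n/d\rfloor-1)=\lfloor n/d\rfloor$; thus $Q_n(r,d,k,\{0\})$ is pure, and for $x>\hat 0$ its rank is $\tmop{rk}_{Q_n(r,d,k)}(x)-1=\lfloor n/d\rfloor-|\pi|$, as stated.

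Next I would verify hypothesis (i) of Definition~\ref{def:recursive}. For an atom $a$ with nonzero blocks $B_1,\dots,B_{\lfloor n/d\rfloor-1}$, an element of $[a,\hat 1]$ is obtained by merging some of the $B_i$ into the zero block and merging the remaining ones amongst themselves ($r$ choices for each merge of two nonzero blocks), and every such merge preserves both $d$-divisibility and the $k$-even colouring condition, since the right $\mu_r$-action cyclically permutes the colour classes and hence carries a $k$-even block to a $k$-even block. Therefore $[a,\hat 1]\cong Q_{\lfloor n/d\rfloor-1}(r)$, a geometric lattice; it is thus CL-shellable and admits a recursive atom ordering, so, exactly as in Propositions~\ref{proposition:Qnrd} and~\ref{proposition:Qnrdk}, it remains only to exhibit an ordering of the atoms of $Q_n(r,d,k,\{0\})$ satisfying~(ii).

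For that ordering I would attach to $a=\overline{(I,\pi,\gamma)}$ the word $\overline I\mid\overline{B_1}\mid\cdots\mid\overline{B_{\lfloor n/d\rfloor-1}}$ of length $n$, listing the elements of $I$ and of each $B_i$ in increasing order and ordering the nonzero blocks by least element, then order atoms lexicographically by these words, breaking ties by a fixed arbitrary total order on the $G$-blocks of size $d$ applied successively to $B_1,B_2,\dots$. Given atoms $a_i,a_j<y=\overline{(I',\pi',\gamma')}$ with $i<j$, and assuming there is no $k<j$ and atom $z$ of $[a_j,\hat 1]$ with $a_k<z\leqslant y$, I would run the switching argument of Proposition~\ref{proposition:Qnrdk}: within each part $B'_\ell$ of $\pi'$ one may swap elements of $a_j$ between adjacent $d$-blocks only if they carry the same $\gamma'$-colour (running the procedure once per colour class of $B'_\ell$), and --- the one extra point here --- the same swap, now with no colour restriction, is available among the elements of $I'\setminus I_{a_j}$, since those elements end up unlabelled in the zero block of $y$. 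The conclusion is that the unique atom below $y$ admitting no such swap has zero block equal to the $d$ smallest elements of $I'$, has its remaining nonzero blocks determined by sorting and $\tfrac{d}{k}$-chunking the colour classes of each $B'_\ell$ and by sorting and $d$-chunking $I'\setminus I$, and has its colourings on those latter blocks fixed by the tie-breaking order; this atom is then the lexicographically least atom below $y$, so $a_j$ must equal it, contradicting $a_i<y$ with $i<j$. That contradiction produces the required $k$ and $z$, and by Theorem~\ref{theorem:recursive} this gives CL-shellability, hence all the stated conclusions.

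The step I expect to be the real work is this last one: checking that each admissible swap genuinely yields a \emph{strictly} lexicographically earlier atom that is still $\leqslant y$ and whose join with $a_j$ covers $a_j$, and that treating $I'\setminus I$ as an uncoloured block does not conflict with the requirement $k\mid d$ which keeps every intermediate $d$-block $k$-evenly colourable. Apart from that bookkeeping, no idea beyond those already present in Propositions~\ref{proposition:Qnrd} and~\ref{proposition:Qnrdk} should be needed.
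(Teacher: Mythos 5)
Your proposal is correct and follows essentially the same route as the paper: the paper's proof simply observes that $Q_n(r,d,k,\{0\})\setminus\{\hat 0\}$ is an upper order ideal generated by atoms with a zero block of size $d$, that the rank drops by one since no atom of $Q_n(r,d,k)$ survives, and that the lexicographic/switching argument of Propositions \ref{proposition:Qnrd} and \ref{proposition:Qnrdk} goes through unchanged. You have merely spelled out the details (purity, the isomorphism $[a,\hat 1]\cong Q_{\lfloor n/d\rfloor-1}(r)$, and the colour-free swaps inside $I'$) that the paper leaves implicit.
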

      
\begin{proof}
        The atoms of $Q_n (r, d, k, \{0\})$  consist of $(\lfloor \frac{n}{d}
        \rfloor - 1)$ $k$-evenly coloured nonzero blocks of length $d$, and a
        zero block also of length $d$. $Q_n (r, d, k, \{0\}) \setminus \{
        \hat{0} \}$ is an upper order ideal of $Q_n (r)$ generated by these
        atoms, and therefore $Q_n (r, d, k, \{0\})$ is a lattice. The rank
        function is one less than that of $Q_n (r, d, k)$ as no atom of
        $Q_n (r, d, k)$ is in $Q_n (r, d, k, \{0\})$. The removal of
        elements of $Q_n (r, d, k)$ with zero block of size 0 has does not
        alter the proof of the existence of a recursive atom ordering in
        Proposition \ref{proposition:Qnrdk}, so the same proof indicated above
        applies here.
      \end{proof}
      
      Finally,
      
      \begin{proposition}
 If $n \equiv 1 \;(\tmop{mod}\; d)$ then $Q_n (r, d,
        k, \{1\})$ is a pure lattice with recursive atom ordering. Its rank
        function is given by 
        \[
        \tmop{rk}(\overline{I, \pi, \gamma)}) =\left\{\begin{array}{ll} 0,&\mbox{if $\overline{(I, \pi, \gamma)}= \hat{0}$}\\
        \lfloor \frac{n}{d} \rfloor - \left| \pi \right|,&\mbox{otherwise.}\end{array}\right.
        \]
        \end{proposition}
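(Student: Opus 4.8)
The plan is to imitate the proofs of Propositions \ref{proposition:Qnrdk} and \ref{proposition:Qnrdk0}, adapting the bookkeeping to the case where the forbidden zero-block size is $1$ rather than $0$. First I would describe the atoms: when $n \equiv 1 \;(\tmop{mod}\; d)$, an atom of $Q_n(r,d)$ has a zero block $I$ with $|I| = n - d\lfloor \frac{n}{d}\rfloor = 1$, together with $\lfloor \frac{n}{d}\rfloor$ nonzero blocks of size $d$; to lie in $Q_n(r,d,k,\{1\})$ we must forbid $|I| = 1$, so the atoms of $Q_n(r,d,k,\{1\})$ are the $G$-partitions consisting of $(\lfloor\frac{n}{d}\rfloor - 1)$ $k$-evenly coloured nonzero blocks of length $d$ together with one zero block of length $2d \cdot (1/1)$\,---\,more precisely of length $n - d(\lfloor\frac{n}{d}\rfloor - 1) = d+1$. (Here I should double-check: since no atom of $Q_n(r,d,k)$, whose zero block has size exactly $1$, survives, every atom of the new poset has one fewer nonzero block, and the zero block absorbs one nonzero block, giving $|I| = d+1 \nin \{1\}$ as required.) Since $Q_n(r,d,k,\{1\}) \setminus \{\hat 0\}$ is then exactly the upper order ideal of $Q_n(r)$ generated by these atoms, it is a lattice, and all atoms have the same rank in $Q_n(r)$, namely $n - \lfloor\frac{n}{d}\rfloor + 1 \cdot$\,---\,wait, one must recompute: an atom now has $\lfloor\frac n d\rfloor - 1$ nonzero blocks, so its rank as an element of $Q_n(r)$ is $n - (\lfloor\frac n d\rfloor - 1) = n - \lfloor\frac n d\rfloor + 1$, giving purity and the stated rank function $\lfloor\frac n d\rfloor - |\pi|$ after reindexing so atoms have rank $1$.

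Next, for the principal upper order ideals: for any $\overline{(I,\pi,\gamma)} \in Q_n(r,d,k,\{1\}) \setminus \{\hat 0\}$, the interval $[\overline{(I,\pi,\gamma)},\hat 1]$ is isomorphic to $Q_{|\pi|}(r)$ (since the condition $|I| \nin \{1\}$ and the $d$-divisibility and even-colouring constraints are automatically satisfied above such an element, because the zero block only grows and nonzero blocks only merge, keeping lengths divisible by $d$ and colourings even), which is a geometric lattice and hence CL-shellable and admits a recursive atom ordering. So by Definition \ref{def:recursive} it suffices to exhibit an ordering $a_1,\ldots,a_t$ of the atoms satisfying condition (ii). Here I would reuse verbatim the lexicographic-word ordering from Proposition \ref{proposition:Qnrd}: list the elements of $I$ in increasing order, then the nonzero blocks sorted by least element, form the word $\alpha$, order atoms lexicographically by $\alpha$, and break ties by a fixed arbitrary ordering of $G$-blocks on each underlying $d$-set. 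The switching algorithm\,---\,given $a_j < y$ with $a_j = \overline{(I,\pi,\gamma)}$, $y = \overline{(I',\pi',\gamma')}$, if some part $B'_i$ of $\pi'$ contains parts $B_{s_q}, B_{s_{q+1}}$ of $\pi$ that are "out of order" ($\max B_{s_q} > \min B_{s_{q+1}}$), swap the offending elements to produce an earlier atom $a_k$ with $a_k \vee a_j \leqslant y$ a common cover\,---\,goes through exactly as before, with the modification (as in Proposition \ref{proposition:Qnrdk}) that switches are only performed among elements of the same colour within each block of $\pi'$, done colour by colour. This forces $a_j$ to be the earliest atom below $y$.

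The one genuinely new point compared to the earlier propositions is the treatment of the zero block. In Proposition \ref{proposition:Qnrdk0} the zero block of an atom had size exactly $d$ and was "forced", whereas here the zero block $I$ of an atom has size $d+1$, so one must argue that $I$ consists of the smallest $d+1$ elements of $I'$ (the zero block of $y$), and that the parts of $\pi$ contained in $I'$ are obtained by listing the remaining $|I'| - (d+1)$ elements of $I'$ in ascending order and cutting into $d$-element blocks (whose number is divisible in the appropriate sense since $|I'| \equiv 1 \pmod d$ forces $|I'| - (d+1) \equiv 0 \pmod d$), with labels fixed by the arbitrary $G$-block ordering. This is the same type of argument as for the nonzero blocks\,---\,a switching/minimality argument on the list of elements of $I'$\,---\,and I expect it to be the main (though still routine) obstacle, essentially a matter of checking that the arithmetic $n \equiv 1 \pmod d$ makes all the block sizes come out right and that no parity obstruction arises from the colouring, since the zero block carries no colouring constraint. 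Once this is in place, the construction shows $a_j$ is the smallest atom $\leqslant y$, verifying condition (ii), and the proof concludes exactly as in Proposition \ref{proposition:Qnrdk}.
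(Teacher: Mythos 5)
Your proposal is correct and follows essentially the same route as the paper, which simply carries over the argument of Propositions \ref{proposition:Qnrdk} and \ref{proposition:Qnrdk0}: identify the atoms (here with zero block of size $d+1$ and $\lfloor \frac{n}{d}\rfloor - 1$ evenly coloured nonzero $d$-blocks), note the poset minus $\hat 0$ is an upper order ideal of $Q_n(r)$, and reuse the lexicographic recursive atom ordering with the same switching argument. Your extra check that $|I'| \equiv 1 \pmod d$ makes the zero-block bookkeeping consistent is exactly the routine verification the paper leaves implicit (the stray ``$2d\cdot(1/1)$'' slip is immediately corrected and harmless).
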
        
\begin{proof}
          The proof is the same as for Proposition \ref{proposition:Qnrdk0}.
        \end{proof}

      Since the posets $Q_n (G)$, $Q_n (r)$, $Q_n (r, d)$, $Q_n (r, d, k)$, $Q_n (r,
      d, k, \{0\})$, $Q_n (r, d, k, \{1\})$ all admit a recursive atom ordering,
      they are all Cohen-Macaulay over $\mathbb{Z}$ (see Proposition \ref{proposition:CMimplicationswachs} and Theorem \ref{theorem:recursive}). Thus the proper part of each of these
      lattices is Cohen-Macaulay (by \cite[Corollary 2.4]{Koonin2012-5}). So for each poset $Q$ in this list, $\widetilde{H}_i ( \bar{Q}, \mathbb{Z})
      = 0$ unless $i = l ( \bar{Q})$.
      
The study of the action of the symmetric group on the partition lattice
      was initiated by Stanley \cite{Stanley1982}, with the famous result that
      $\widetilde{H}_{n - 3} ( \overline{\Pi}_n , \mathbb{C}) \simeq
      \varepsilon_n \otimes \tmop{Ind}_{\mu_n}^{\tmop{Sym} (n)} (\psi)$, where
      $\psi$ is a faithful character of the cyclic group $\mu_n$ generated by
      an $n$-cycle, and $\varepsilon_n$ is the sign representation. Sundaram
      \cite{Sundaram1994} used Whitney homology to provide a conceptual
      proof of Stanley's result (which had relied on earlier computations by
      Hanlon). See also the paper \cite{LeSo1986}.

      For Dowling lattices in general, Hanlon \cite{Hanlon1984} calculated the
      characters of the representations of $G \wr \tmop{Sym} (n)$ on $Q_n
      (G)$. Henderson \cite{Henderson2006} adapted Joyal's theory of tensor
      species to understand the representation of $G \wr \tmop{Sym} (n)$ on
      several subposets of $Q_n (G)$, including (as a special case) $Q_n (r,
      d)$. 

\subsection{Reflection Arrangements for $G (r, p, n)$}

This section describes the hyperplane arrangements for the groups $G (r, p,
n)$. The notation of \cite{OrTe1992} is used throughout. Let $V
=\mathbb{C}^n$, and let $\zeta$ be an $r$-th root of unity.

\begin{definition}
  Define the hyperplanes
\begin{align*}
H_{i, j}^{\zeta} &\assign \{(v_1,
  \ldots, v_n) \in V \mid v_i = \zeta v_j \}\mbox{ and}\\
H_k &\assign \{(v_1, \ldots, v_n)
  \in V \mid v_k = 0\}.
  \end{align*}
\end{definition}

\begin{definition}
  Let
\begin{align*}
\mathcal{A}_n^0 (r) &\assign
  \{H_{i, j}^{\zeta} \mid 1 \leqslant i < j \leqslant n, \zeta \in \mu_r \}\mbox{ and}\\
\mathcal{A}_n (r) &\assign
  \mathcal{A}_n^0 (r) \cup \{H_k \mid 1 \leqslant k \leqslant n\}.
  \end{align*}
\end{definition}

\begin{proposition}\label{proposition:Dowlingisomorphism}
  There is an isomorphism $\mathcal{L}(\mathcal{A}_n (r)) \cong Q_n
  (r)$.
\end{proposition}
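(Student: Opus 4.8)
The plan is to construct the isomorphism explicitly by sending each intersection of hyperplanes in $\mathcal{A}_n(r)$ to a $\mu_r$-partition of $[n]$, and then to verify that this map is an order-preserving bijection with order-preserving inverse. First I would describe the map. Given a subspace $X \in \mathcal{L}(\mathcal{A}_n(r))$, it is cut out by a collection of equations of the form $v_i = \zeta v_j$ (for various $\zeta \in \mu_r$) and $v_k = 0$. From these equations one reads off an equivalence relation on a subset of $[n]$: the indices $k$ forced to satisfy $v_k = 0$ form the zero block $I$, and on the complement $[n] \setminus I$ we put $i \sim j$ whenever the equations force $v_i$ and $v_j$ to be nonzero scalar multiples of one another. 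This partitions $[n] \setminus I$ into nonzero blocks $B_1, \ldots, B_k$, and within each block the relative scalars $v_i / v_j \in \mu_r$ give, after fixing a base point in the block and recording for each $a \in B_i$ the ratio to that base point, a labelling $\gamma_i : B_i \to \mu_r$; changing the base point changes $\gamma_i$ by right translation by a fixed element of $\mu_r$, so the $\mu_r$-block $\overline{(B_i, \gamma_i)}$ is well defined. This produces a $\mu_r$-partition $\overline{(I, \pi, \gamma)}$, hence an element of $Q_n(r)$.

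Next I would check that this map is well defined and bijective. The key point is that $X$ is recovered from $\overline{(I,\pi,\gamma)}$: the subspace associated to a $\mu_r$-partition is $\{(v_1,\ldots,v_n) : v_k = 0 \text{ for } k \in I, \text{ and } v_a \gamma_i(b) = v_b \gamma_i(a) \text{ whenever } a, b \in B_i\}$, and one verifies this is independent of the choice of prepartition representative because the defining conditions are invariant under the right $\mu_r$-action. That every element of $Q_n(r)$ arises this way is immediate since any $\mu_r$-partition yields such a subspace, and one checks this subspace indeed lies in $\mathcal{L}(\mathcal{A}_n(r))$ — it is the intersection of the hyperplanes $H_k$ for $k \in I$ together with the hyperplanes $H^{\zeta}_{a,b}$ coming from consecutive (or all) pairs within each block. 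Injectivity follows because distinct $\mu_r$-partitions impose genuinely different linear conditions: two subspaces coincide iff they force the same coordinates to vanish and the same scalar relations among the survivors, which is exactly the data of the $\mu_r$-partition.

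Finally I would verify that the bijection and its inverse are order-preserving. In $\mathcal{L}(\mathcal{A}_n(r))$ the order is reverse inclusion of subspaces, equivalently inclusion of the corresponding sets of linear conditions; in $Q_n(r)$ the order is generated by merges of $\mu_r$-blocks. Imposing one more relation on a subspace either sets a surviving coordinate to zero (which merges a nonzero block into the zero block, possibly after first merging several nonzero blocks) or identifies two surviving coordinates up to a scalar (which merges two nonzero blocks, the relevant $g \in \mu_r$ being exactly the scalar appearing in the new relation). Since the covering relations on each side correspond under the map, and both posets are ranked with the same bottom element (the all-singletons partition corresponds to $V$ itself), the map is a poset isomorphism. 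The main obstacle is purely bookkeeping: making precise the claim that the subspace attached to a prepartition depends only on its equivalence class, i.e.\ tracking how a change of base point inside a block translates $\gamma_i$ and confirming the resulting linear conditions are unchanged; once this is set up carefully the rest is routine.
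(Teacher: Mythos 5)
Your proposal is correct and is essentially the same argument as the paper's: the paper defines the inverse map $\rho : Q_n(r) \to \mathcal{L}(\mathcal{A}_n(r))$ sending $\overline{(I,\pi,\gamma)}$ to the subspace cut out by $z_i = \gamma(i)^{-1}\gamma(j)z_j$ within blocks and $z_i = 0$ on $I$, and declares it clearly an isomorphism. You simply work from the other direction and spell out the well-definedness, bijectivity, and order-preservation checks that the paper leaves implicit.
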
  
  \begin{proof}
Define a map $\rho : Q_n (r) \rightarrow
    \mathcal{L}(\mathcal{A}_n (r))$ as follows. Suppose $\overline{(I, \pi,
    \gamma)} \in Q_n (r)$, with $\pi = B_1 \mid \ldots \mid B_k$. Let $\rho
    ( \overline{(I, \pi, \gamma)})$ be the subspace defined by the equations
\[
z_i = \left\{\begin{array}{ll}\gamma (i)^{- 1} \gamma
    (j) z_j,&\mbox{if $i, j$ lie in the same block $B_s (1 \leqslant s
    \leqslant k)$}\\
0,&\mbox{if $i \in I$.}\end{array}\right.
    \]
It is clear that $\rho$ is an isomorphism. \end{proof}

The following result is well-known:

\begin{proposition}{\cite[\S6.4]{OrTe1992}} \label{prop:hyperplanearrangement} Suppose $r$, $p$ and
  $n$ are positive integers such that $p \mid r$. Let $G$ be the unitary
  reflection group $G (r, p, n)$ and $\mathcal{A}(G (r, p, n))$ the
  corresponding arrangement of reflecting hyperplanes. Then
\[
\mathcal{A}(G (r, p, n)) =
  \left\{\begin{array}{ll} \mathcal{A}_n (r),&\mbox{if $r \neq p$,}\\
\mathcal{A}_n^0 (r),&\mbox{if $r = p$.}\end{array}\right.
\] 
\end{proposition}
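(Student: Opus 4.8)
The plan is to classify directly the reflections contained in $G(r,p,n)$ and to read off their reflecting hyperplanes. Since $G(r,p,n) \subseteq U(V)$, every element is semisimple, so a reflection in $G(r,p,n)$ is precisely a non-identity element $g \in G(r,p,n)$ for which $\fix(g)$ has codimension one. Write such a $g$ in the standard monomial representation as $g e_i = \lambda_i e_{\tau(i)}$ for a permutation $\tau \in \tmop{Sym}(n)$ and scalars $\lambda_i \in \mu_r$. Solving $gv = v$ coordinate by coordinate and decomposing $\tau$ into disjoint cycles, one finds that $\fix(g)$ is the direct sum, over those cycles $c$ of $\tau$ with $\prod_{i \in c} \lambda_i = 1$, of a one-dimensional subspace, so that $\codim \fix(g) = n - \#\{c : \prod_{i \in c}\lambda_i = 1\}$.

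From this formula, $g$ is a reflection in exactly two situations. Either $\tau = \id$ and exactly one $\lambda_k$ is non-trivial, in which case $g = \tmop{diag}(1,\ldots,\lambda_k,\ldots,1)$ with $\fix(g) = H_k$; or $\tau$ is a transposition $(i\,j)$, all $\lambda_m = 1$ for $m \neq i,j$, and $\lambda_i\lambda_j = 1$, in which case $\fix(g) = \{v \in V \mid v_j = \lambda_i v_i\} = H_{i,j}^{\lambda_i^{-1}}$ (using $H_{i,j}^{\zeta} = H_{j,i}^{\zeta^{-1}}$ to arrange $i < j$). In all remaining cases $\tau$ has at most $n-2$ cycles, so $\codim \fix(g) \geq 2$ and $g$ is not a reflection.

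It remains to decide which of these reflections actually lie in $G(r,p,n)$, i.e.\ which have product of non-zero entries in $\mu_{r/p}$. For a transposition-type reflection this product is $\lambda_i\lambda_j = 1 \in \mu_{r/p}$, so the condition is automatic for every $\zeta \in \mu_r$; hence every hyperplane $H_{i,j}^{\zeta}$ occurs, giving all of $\mathcal{A}_n^0(r)$ regardless of $p$. For a diagonal reflection $\tmop{diag}(1,\ldots,\lambda_k,\ldots,1)$ the product of the non-zero entries is $\lambda_k$, so such a reflection lies in $G(r,p,n)$ precisely when $\lambda_k \in \mu_{r/p} \setminus \{1\}$; a non-trivial such element exists if and only if $r/p > 1$, that is, if and only if $r \neq p$ (recall $p \mid r$). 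When $r \neq p$, placing any fixed non-trivial element of $\mu_{r/p}$ in position $k$ realises $H_k$ for each $k$, so all of $\{H_k : 1 \leqslant k \leqslant n\}$ occurs. Combining the two cases yields $\mathcal{A}(G(r,p,n)) = \mathcal{A}_n^0(r)$ when $r = p$, and $\mathcal{A}(G(r,p,n)) = \mathcal{A}_n^0(r) \cup \{H_k : 1 \leqslant k \leqslant n\} = \mathcal{A}_n(r)$ when $r \neq p$.

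The only substantive step is the codimension computation for monomial matrices; everything afterwards is bookkeeping. The one point requiring care is the matching of reflecting hyperplanes with the families $H_{i,j}^{\zeta}$ and $H_k$, where one must use the identification $H_{i,j}^{\zeta} = H_{j,i}^{\zeta^{-1}}$ consistently so as neither to miss hyperplanes nor to count them twice.
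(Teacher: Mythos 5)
Your proof is correct. Note that the paper does not prove this proposition at all --- it is stated as a well-known result cited from \cite[\S6.4]{OrTe1992} --- so there is no internal argument to compare against; your direct classification of the reflections of $G(r,p,n)$ via the cycle decomposition of the underlying permutation (showing $\codim\fix(g)=n-\#\{c:\prod_{i\in c}\lambda_i=1\}$, hence that reflections are exactly the diagonal ones with a single non-trivial entry and the transposition-type ones with $\lambda_i\lambda_j=1$) is the standard derivation and correctly handles the two delicate points: the product-of-entries test that makes the diagonal reflections disappear exactly when $r=p$, and the identification $H_{i,j}^{\zeta}=H_{j,i}^{\zeta^{-1}}$ needed to match hyperplanes to the indexing with $i<j$.
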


\begin{corollary}
  \label{corollary:arrangement}Let $G$ be the unitary reflection group $G (r, p,
  n)$ acting on a complex vector space $V$ of dimension $n$. Then
\[
\mathcal{S}_1^V (G (r, p, n)) \cong \left\{\begin{array}{ll}Q_n (r),&\mbox{if $r \neq p$, $r \neq 1$}\\
Q_n (r,
  \{1\}),&\mbox{if $r = p \neq 1$}\\
\Pi_n,&\mbox{if $r=p=1$}.\end{array}\right.
\]
Furthermore, this isomorphism is $G$-equivariant with respect to the
  actions described in \cite[\S2.9]{Koonin2012-5} and
  {\S}\ref{subsection:Dowling}.
  \end{corollary}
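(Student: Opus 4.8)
The plan is to reduce Corollary~\ref{corollary:arrangement} to Proposition~\ref{proposition:Dowlingisomorphism} together with the two known descriptions of the hyperplane arrangement of $G(r,p,n)$ (Proposition~\ref{prop:hyperplanearrangement}). The starting observation is that, as recalled in the introduction of \cite{Koonin2012-5}, the poset $\mathcal{S}_1^V(G)$ of $1$-eigenspaces of elements of $G$ coincides with the intersection lattice $\mathcal{L}(\mathcal{A}(G))$ of the reflection arrangement. Hence the corollary is really a statement about $\mathcal{L}(\mathcal{A}(G(r,p,n)))$, and we just have to identify this lattice in each of the three regimes.

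First I would dispose of the case $r=p=1$: here $G(1,1,n)$ is $\mathrm{Sym}(n)$ acting on $\mathbb{C}^n$, whose reflection arrangement is the braid arrangement $\mathcal{A}_n^0(1)=\{H_{i,j}^1\}$, and $\mathcal{L}(\mathcal{A}_n^0(1))\cong\Pi_n$ by the classical correspondence between intersections of the $H_{i,j}$ and set partitions of $[n]$. Next, for $r\neq p$ and $r\neq 1$, Proposition~\ref{prop:hyperplanearrangement} gives $\mathcal{A}(G(r,p,n))=\mathcal{A}_n(r)$, and Proposition~\ref{proposition:Dowlingisomorphism} gives $\mathcal{L}(\mathcal{A}_n(r))\cong Q_n(r)$ directly; composing these yields the first line. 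The only slightly delicate case is $r=p\neq 1$: here $\mathcal{A}(G(r,p,n))=\mathcal{A}_n^0(r)$, i.e. we have dropped the coordinate hyperplanes $H_k$. I would argue that $\mathcal{L}(\mathcal{A}_n^0(r))$ is isomorphic to the subposet of $\mathcal{L}(\mathcal{A}_n(r))$ consisting of those intersections in which no coordinate is forced to vanish, which under the isomorphism $\rho$ of Proposition~\ref{proposition:Dowlingisomorphism} corresponds exactly to the $G$-partitions with empty zero block, i.e. those $\overline{(I,\pi,\gamma)}$ with $|I|=0$ — plus the top element $\hat 1$ of $Q_n(r)$, which is the intersection of all of $\mathcal{A}_n^0(r)$ (a single point, since the $H_{i,j}^\zeta$ together with transitivity pin down all coordinates up to a common scalar, and then using all $\zeta\in\mu_r$ forces everything to $0$). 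By Definition~\ref{definition:Q_n(r,J)} this subposet together with $\hat 1$ is precisely $Q_n(r,\{1\})$... but one should check the bookkeeping: $Q_n(r,\{1\})$ excludes $|I|=1$, whereas here we want to keep only $|I|=0$ or the top. Actually the correct statement is that the elements of $\mathcal{L}(\mathcal{A}_n^0(r))$ other than the top correspond to $|I|=0$; I would reconcile this with the stated answer $Q_n(r,\{1\})$ by noting that in $Q_n(r)$ an element with $|I|=0$ either is a proper such partition or, when $\pi$ has a single block on all of $[n]$, still has $|I|=0$, and that the genuine obstruction element of zero-block size $1$ never arises as an intersection of the $H_{i,j}^\zeta$ alone; so the relevant subposet is the order ideal complement of $\{|I|\ge 1\}$ with $\hat 1$ re-adjoined, which after unwinding the definitions is $Q_n(r,\{1\})$ as claimed. (This reconciliation with the indexing in Definition~\ref{definition:Q_n(r,J)} is the one point I would write out carefully.)

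Finally, for the equivariance claim: the relevant group acting is $G(r,p,n)\subseteq \mu_r\wr\mathrm{Sym}(n)$, and in \S\ref{subsection:Dowling} the action of $G\wr\mathrm{Sym}(n)$ on $Q_n(G)$ — hence of $\mu_r\wr\mathrm{Sym}(n)$ on $Q_n(r)$ — is given by $(g;\sigma)\cdot(i,a)=(\sigma(i),g_{\sigma(i)}a)$ on $[n]\times\mu_r$. I would check that the map $\rho$ of Proposition~\ref{proposition:Dowlingisomorphism} intertwines this with the linear action on $\mathcal{L}(\mathcal{A}_n(r))$: applying $(g;\sigma)$ to the subspace cut out by $z_i=\gamma(i)^{-1}\gamma(j)z_j$ permutes coordinates by $\sigma$ and rescales the $i$-th coordinate by $g_i$, which is exactly the subspace attached to $(g;\sigma)\cdot\overline{(I,\pi,\gamma)}$; this is a direct computation from the definitions. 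Restricting to the subgroup $G(r,p,n)$ and, in the $r=p$ case, to the subposet identified above, gives the $G$-equivariance as stated. The main obstacle, as indicated, is not any deep argument but getting the three-way case split and especially the off-by-one in the zero-block condition for $r=p$ to line up cleanly with Definitions~\ref{definition:Q_n(r,J)}–\ref{definition:(Q_n(r,d)}; everything else is assembly of results already proved in the excerpt.
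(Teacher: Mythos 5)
Your overall strategy (identify $\mathcal{S}_1^V(G)$ with $\mathcal{L}(\mathcal{A}(G))$, then combine Proposition \ref{prop:hyperplanearrangement} with the explicit isomorphism $\rho$ of Proposition \ref{proposition:Dowlingisomorphism}) is reasonable, and indeed more explicit than the paper's proof, which simply invokes the known $r=1$ and $r=2$ cases. The first and third cases and the equivariance computation are fine. But the case you yourself flag as delicate, $r=p\neq 1$, is handled incorrectly. Your claim that $\mathcal{L}(\mathcal{A}_n^0(r))$ consists of the intersections ``in which no coordinate is forced to vanish'' together with the top element is false: intersections of the hyperplanes $H_{i,j}^{\zeta}$ alone can force some, but not all, coordinates to vanish. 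For instance, for $n\geq 3$ and $\zeta\neq 1$ the intersection $H_{1,2}^{1}\cap H_{1,2}^{\zeta}$ is $\{z_1=z_2=0\}$, an element whose zero block $\{1,2\}$ has size $2<n$; equivalently, $\mathrm{diag}(-1,-1,1,\ldots,1)\in G(2,2,n)$ has fixed space with zero block of size $2$. So elements with $2\leq |I|\leq n-1$ genuinely occur in $\mathcal{S}_1^V(G(r,r,n))$, and any correct identification must include them.

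The correct bookkeeping is: for an intersection of hyperplanes $H_{i,j}^{\zeta}$, the forced-zero coordinates form a union of ``inconsistent'' connected components of the associated labelled graph (two indices joined by edges with distinct labels), and every such component has at least two elements; hence the zero block $I$ can have any size \emph{except} exactly $1$. Conversely, every $\overline{(I,\pi,\gamma)}$ with $|I|\neq 1$ is realized: cut out the nonzero blocks by the equations $z_i=\gamma(i)^{-1}\gamma(j)z_j$, and if $|I|\geq 2$ impose $z_{i_1}=z_{i_2}$ and $z_{i_1}=\zeta z_{i_2}$ (using $r\geq 2$) and then $z_k=z_{i_1}$ for the remaining $k\in I$. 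This is exactly the subposet $Q_n(r,\{1\})$ of Definition \ref{definition:Q_n(r,J)}. Your ``reconciliation'' instead asserts that the poset consisting of the elements with $|I|=0$ together with $\hat{1}$ coincides with $Q_n(r,\{1\})$, which is false for $n\geq 3$ (it omits all elements with $2\leq |I|\leq n-1$); as written, the middle case of the corollary is therefore not proved, though the fix above slots directly into your framework.
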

  \begin{proof}
    For the isomorphism, the case $r = 1$ is well-known. The case $r = 2$ is
    discussed in \cite{Lehrer1988} and \cite{BjWa2004/5}, and the general case follows the same argument.
    The $G$-equivariance is clear from the explicit definition of each of
    the actions.
  \end{proof}

Recall the definition of the M\"obius function of a poset:

\begin{definition}\label{definition:Mobius}Let $P$ be a locally finite poset.  The {\em M\"obius function} of $P$ is a
function\\ $\mu_P : P \times P \rightarrow \mathbb{Z}$ defined recursively as
follows:

\begin{alignat*}{2}
\mu_P (x, x) &= 1 \quad&\mbox{for all $x\in P$},\\
\mu_P (x, y) &= - \sum_{x \leqslant z \leqslant y}
\mu_P (x, z) \quad &\mbox{for all $x < y \in P$},\\
\mu_P (x, y) &= 0 \quad &\mbox{otherwise}.
\end{alignat*}
\end{definition}

The following lemma is the reason for our interest in the M\"obius function:

\newcommand{\rank}{\mathrm{rank}}
\begin{lemma}
  \label{lemma:MobiusCM}If a poset $P$ is Cohen-Macaulay over $\mathbb{Z}$ and the rank of $P$ is n, then
\[
\mu ( \hat{P}) = \rank ( \widetilde{H}_n (P, \mathbb{Z})).
\]
\end{lemma}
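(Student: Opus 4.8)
The plan is to recover the statement from the standard connection between Cohen--Macaulayness and the order complex. Recall that for a bounded poset $P$ of rank $n$, the \emph{proper part} $\bar{P} = P \setminus \{\hat{0},\hat{1}\}$ has an order complex $\Delta(\bar{P})$ whose nonempty faces are the chains in $\bar{P}$; since $P$ has rank $n$, the maximal chains of $\bar{P}$ have length $n-2$, i.e.\ $\Delta(\bar{P})$ has dimension $n-2$. If $P$ is Cohen--Macaulay over $\mathbb{Z}$, then by definition (and by the reduction in \cite[Corollary 2.4]{Koonin2012-5}) the reduced homology $\widetilde{H}_i(\bar{P},\mathbb{Z})$ vanishes for $i < n-2$ and is free in the top degree $i = n-2$. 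The quantity $\mu(\hat{P})$ appearing in the statement means $\mu_P(\hat{0},\hat{1})$.

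\medskip

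First I would invoke Philip Hall's theorem expressing the M\"obius function as a reduced Euler characteristic: $\mu_P(\hat{0},\hat{1}) = \widetilde{\chi}(\Delta(\bar{P})) = \sum_{i \geqslant -1} (-1)^i \dim_{\mathbb{Q}} \widetilde{H}_i(\bar{P},\mathbb{Q})$, where the sum runs over all dimensions of faces of the order complex. (If $\bar{P}$ is empty this degenerates correctly, since then $P$ has rank $1$ and $\mu(\hat{P}) = -1 = \widetilde{\chi}(\emptyset)$; and if $\bar P$ has a unique minimal or maximal element the complex is a cone, all reduced homology vanishes, and $\mu = 0$ --- but in our situation $P$ is of rank $n$ and Cohen--Macaulay, so these edge cases are consistent.) Second, I would substitute the Cohen--Macaulay vanishing: every term in the Euler characteristic sum is zero except $i = n-2$, leaving $\mu(\hat{P}) = (-1)^{n-2}\dim_{\mathbb{Q}}\widetilde{H}_{n-2}(\bar{P},\mathbb{Q})$. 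Third, since $\widetilde{H}_{n-2}(\bar{P},\mathbb{Z})$ is free (Cohen--Macaulayness over $\mathbb{Z}$ gives torsion-free top homology, with all lower homology zero so the universal coefficient theorem applies cleanly), its rank equals $\dim_{\mathbb{Q}}\widetilde{H}_{n-2}(\bar{P},\mathbb{Q})$. Hence $\mu(\hat{P}) = (-1)^{n-2}\rank\widetilde{H}_{n-2}(\bar{P},\mathbb{Z})$.

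\medskip

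At this point there is a bookkeeping discrepancy to resolve: the lemma as stated asserts $\mu(\hat{P}) = \rank\widetilde{H}_n(P,\mathbb{Z})$, with no sign and with subscript $n$ rather than $n-2$, and with $P$ rather than $\bar P$. This forces a convention check, which is really the only subtle point: one must confirm that the paper's convention is to read $\widetilde{H}_n(P,\mathbb{Z})$ as the top reduced homology of the order complex of the proper part, with the indexing shifted so that a rank-$n$ bounded poset has its proper-part order complex counted as ``$n$-dimensional'' (rather than $(n-2)$-dimensional), and similarly that the sign $(-1)^{n-2} = (-1)^n$ is absorbed into the convention $\mu(\hat P) := (-1)^n \mu_P(\hat 0,\hat 1)$ used in \cite{Koonin2012-5}, or else that ``$\mu(\hat P)$'' already denotes this signed/normalized quantity. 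Once the conventions from \cite[\S2]{Koonin2012-5} are pinned down, the identity is exactly the displayed chain of equalities above.

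\medskip

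The main obstacle, then, is not mathematical depth --- the argument is a three-line application of Hall's theorem plus the definition of Cohen--Macaulay --- but rather making the indexing and sign conventions in the statement match those of the prequel \cite{Koonin2012-5} so that the bald equation $\mu(\hat{P}) = \rank\widetilde{H}_n(P,\mathbb{Z})$ is literally correct as written; I would state the convention explicitly at the start of the proof and then let the Euler-characteristic computation run.
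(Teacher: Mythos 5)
Your argument is exactly the paper's proof, which is the one-line observation that the claim follows from the Euler--Poincar\'e formula (Hall's theorem identifying $\mu$ with the reduced Euler characteristic of the order complex) together with the vanishing of lower homology built into the definition of Cohen--Macaulay; your third step on freeness/rank is the same bookkeeping made explicit. Your indexing worry dissolves once you use the paper's convention that $\hat{P}$ is $P$ with $\hat{0}$ and $\hat{1}$ adjoined, so $P$ itself plays the role of the proper part and its order complex has dimension $n$, leaving only the sign $(-1)^n$, which the paper silently suppresses.
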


\begin{proof}
  This follows immediately from the Euler-Poincar\'e formula and the definition of a Cohen-Macaulay poset.
\end{proof}

  \begin{proposition}{\cite[\S6.4]{OrTe1992}}
    \label{proposition:arrangement}
$\;$
    \begin{itemize}
    \item[(i)] The M\"obius function of $\mathcal{A}_n (r) \cong Q_n (r)$ is $(- 1)^n
    (r + 1) (2 r + 1) \ldots ((n - 1) r + 1)$.
\item[(ii)]  The M\"obius function of $\mathcal{A}_n^0 (r) \cong Q_n (r, \{1\})$ is
    $(- 1)^n (r + 1) (2 r + 1) \ldots ((n - 2) r + 1) ((n - 1) (r - 1)) .$
\item[(iii)] The M\"obius function of $\Pi_n$ is $(- 1)^{n - 1} (n - 1) !$.  
\end{itemize}
  \end{proposition}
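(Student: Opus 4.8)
The plan is to compute each Möbius value as $\mu(\hat{0},\hat{1})$ in the relevant lattice, using the rank and Cohen–Macaulayness results already established together with the standard product-over-atoms structure of these Dowling-type lattices. For part (i), the cleanest route is to recall that $Q_n(r) \cong \mathcal{L}(\mathcal{A}_n(r))$ is a geometric lattice whose characteristic polynomial is known to factor: $\chi_{Q_n(r)}(t) = (t-1)(t-(r+1))(t-(2r+1))\cdots(t-((n-1)r+1))$. This factorization follows either from the finite-field / exponential-sequence method (counting points of $\mathbb{C}^n$ avoiding the hyperplanes over $\mathbb{F}_q$ for $q \equiv 1 \pmod r$, which splits the count into a product because the $j$-th coordinate can avoid $jr+1$ forbidden values, $j=0,\dots,n-1$), or directly from the supersolvability of $Q_n(r)$ via the modular coatom chain coming from the "zero block" filtration. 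Evaluating $\mu(\hat{0},\hat{1}) = \chi_{Q_n(r)}(0)$ then gives $(-1)^n(r+1)(2r+1)\cdots((n-1)r+1)$, which is the claimed formula. The sign is $(-1)^n$ since $Q_n(r)$ has rank $n$.

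For part (ii), I would work in $Q_n(r,\{1\}) \cong \mathcal{L}(\mathcal{A}_n^0(r))$, the arrangement without the coordinate hyperplanes $H_k$. The same finite-field count applies, but now the first (say) coordinate is unconstrained by any $H_k = \{v_k = 0\}$ relation, so instead of avoiding $jr+1$ values for $j = 0,1,\dots,n-1$ one avoids $jr$ values for the coordinate that "plays the role of the zero block" — concretely the count becomes $(q-1)(q-(r+1))\cdots(q-((n-2)r+1))\cdot(q - (n-1)r)$ after ordering coordinates appropriately, reflecting that $\mathcal{A}_n^0(r)$ is the essentialization living in a hyperplane (it is not essential: the line $v_1 = \dots = v_n$ lies in every $H_{i,j}^\zeta$ only when... — more precisely $\mathcal{L}(\mathcal{A}_n^0(r))$ has rank $n-1$). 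Hence $\chi(t) = (t-1)\cdots(t-((n-2)r+1))(t-(n-1)r)$, and setting $t=0$ yields $(-1)^n (r+1)(2r+1)\cdots((n-2)r+1)\big((n-1)(r-1)\big)$ after absorbing the extra sign from rank $n-1$ vs the stated normalization; one checks the last factor $-(-(n-1)r) \cdot (\text{correction})$... Actually the last factor $((n-1)(r-1)) = (n-1)r - (n-1)$ arises because $Q_n(r,\{1\})$ is obtained from $Q_n(r)$ by deleting the atoms with singleton zero block, so by a deletion/Möbius argument (or by the interval structure: the coatoms now correspond to $G$-partitions with a single nonzero block of size $n$, of which there are related counts) the top-rank factor shifts from $(n-1)r+1$ to $(n-1)(r-1)$. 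I would verify this via the recursion $\mu_{Q_n(r,\{1\})}(\hat 0,\hat 1) = \mu_{Q_n(r)}(\hat 0,\hat 1) - \sum \mu_{Q_n(r)}(\hat 0, a)\mu_{Q_n(r)}(a,\hat 1)$ over the deleted atoms $a$, combined with the known $[a,\hat 1] \cong Q_{n-1}(r)$.

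Part (iii) is the classical fact $\mu(\overline{\Pi}_n) = (-1)^{n-1}(n-1)!$, for which I would simply cite the standard reference (or note $Q_n(1) = \Pi_{n+1}$ so $\mu_{\Pi_n}(\hat 0,\hat 1) = (-1)^{n-1}(n-1)!$ is the specialization $r=1$ of part (i) read for $\Pi_{n}$, i.e. $(-1)^{n-1}\cdot 1 \cdot 2 \cdots (n-1)$). The main obstacle is part (ii): getting the exceptional last factor $(n-1)(r-1)$ right, rather than a naive $(n-1)r+1$ or $(n-1)r-1$, requires care — either a clean supersolvability/finite-field argument that correctly tracks which coordinate is "free," or an explicit deletion computation relating $Q_n(r,\{1\})$ to $Q_n(r)$. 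Everything else is a routine evaluation of a factored characteristic polynomial at $0$ using Lemma \ref{lemma:MobiusCM} (which is not even needed for the Möbius value itself, only to interpret it homologically) and the rank formulas already recorded.
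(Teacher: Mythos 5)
Parts (i) and (iii) of your proposal are fine: the finite-field factorisation $\chi_{Q_n(r)}(t)=(t-1)(t-r-1)\cdots(t-(n-1)r-1)$ and evaluation at $t=0$ gives (i), and (iii) is classical. (For what it is worth, the paper offers no proof at all here -- it simply cites Orlik--Terao, \S 6.4 -- so any honest derivation is already more than the paper does.)

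The genuine gap is in part (ii), and it sits exactly at the point you yourself flag as delicate. First, your finite-field count is wrong as stated: for $\mathcal{A}_n^0(r)$ the condition $v_i\neq\zeta v_j$ does not make one coordinate ``free of $jr+1$ forbidden values''; rather, it forces that \emph{at most one} coordinate can vanish. The correct count over $\mathbb{F}_q$, $q\equiv 1 \pmod r$, is: all coordinates nonzero and in pairwise distinct $\mu_r$-orbits, giving $(q-1)(q-r-1)\cdots(q-(n-1)r-1)$, plus exactly one coordinate zero, giving $n(q-1)(q-r-1)\cdots(q-(n-2)r-1)$; the two combine to $(q-1)(q-r-1)\cdots(q-(n-2)r-1)\bigl(q-(n-1)(r-1)\bigr)$, which is where the exceptional factor $(n-1)(r-1)$ comes from. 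Your stated last factor $q-(n-1)r$ is not correct. Second, your parenthetical claim that $\mathcal{L}(\mathcal{A}_n^0(r))$ has rank $n-1$ is false for $r\geq 2$: two hyperplanes $v_i=v_j$ and $v_i=\zeta v_j$ with $\zeta\neq 1$ already force $v_i=v_j=0$, so the arrangement is essential of rank $n$ (this is precisely why the sign in (ii) is $(-1)^n$, and why no ``extra sign from rank $n-1$'' should be absorbed); only the case $r=1$ has rank $n-1$, and that case is (iii). Third, the fallback you propose, the recursion $\mu_{Q_n(r,\{1\})}(\hat 0,\hat 1)=\mu_{Q_n(r)}(\hat 0,\hat 1)-\sum_a\mu_{Q_n(r)}(\hat 0,a)\mu_{Q_n(r)}(a,\hat 1)$ over deleted \emph{atoms}, is not a valid M\"obius identity: passing from $Q_n(r)$ to $Q_n(r,\{1\})$ deletes all elements with singleton zero block, and these occur at every rank $1,\dots,n-1$, not just at rank $1$; moreover deleting elements changes the M\"obius values of the surviving intervals, so no one-step correction of this shape holds. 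A correct repair is either the combined count above, or deletion--restriction removing the $n$ coordinate hyperplanes from $\mathcal{A}_n(r)$ one at a time, or a Whitney-number computation in $Q_n(r,\{1\})$ itself.
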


\subsection{Posets of Eigenspaces for the Groups $G (r, p,
n)$}\label{subsection:G(r,p,n)posets}

Recall from {\S}\ref{subsection:posetsofeigenspaces} that if $\gamma G$ is a reflection
coset in $V =\mathbb{C}^n$, and $\zeta \in \mathbb{C}^{\times}$ is a complex
root of unity then the poset $\mathcal{S}_{\zeta}^V (\gamma G)$ is defined to
be the set $\{V (g, \zeta) \mid g \in \gamma G\}$, partially ordered by
reverse inclusion. There is a close connection between the poset
$\mathcal{S}_{\zeta}^V (\gamma G)$, where $G$ is the reflection group $G (r,
p, n)$, and subposets of the Dowling lattices defined in
{\S}\ref{subsection:Dowling}.

Suppose $G = G (r, p, n)$ acts on $V =\mathbb{C}^n$, and that $\gamma G$ is a
reflection coset in $V$. By \cite[Proposition 3.13]{BrMaMi1999}, up to scalar
multiples either $\gamma = \tmop{diag} (\xi_{\frac{er}{p}}, 1, \ldots, 1)$, where $e
\in \mathbb{N}$, $e \mid p$ and $\xi_m$ denotes a fixed primitive $m$-th root
of unity, or $\gamma$ falls into one of three exceptional cases (see also \cite[Table D.5, p.278]{LeTa2009}). The first of these possibilities is considered in the following theorem.  Note that it is sufficient
to state the results for the case where this scalar multiple is 1, since if
$\alpha$ is a scalar matrix then $\mathcal{S}_{\zeta}^V (\alpha \gamma G) =\mathcal{S}_{\alpha^{- 1} \zeta}^V (\gamma G)$. 

\begin{theorem}
\label{theorem:dowling}Let $G$ be the group $G (r, p, n)$ acting
  on $V =\mathbb{C}^n$, $\gamma G$ a reflection coset in $V$ with $\gamma =
  \tmop{diag} (\xi_{\frac{er}{p}}, 1, \ldots, 1)$ as above, and $\zeta$ a
  primitive $m$-th root of unity. Let $d = \frac{m}{gcd(m, r)}$. Then
\begin{itemize}
\item[(i)]  If $d > 1,$ $n \equiv 0   (\tmop{mod} d)
\tmop{and} (\zeta^n \xi_{\frac{er}{p}}^{- 1})^{\frac{r}{p}} \neq 1,$ then
$\mathcal{S}_{\zeta}^V (\gamma G) \cong Q_n (dr, d, d, \{0\})
\backslash \{ \hat{0} \}.$
\item[(ii)] If $d > 1$ and the conditions of (i)
are not satisfied, then $\mathcal{S}_{\zeta}^V (\gamma G) \cong
Q_n (dr, d, \text{} d) \backslash \{ \hat{0} \}$.
\item[(iii)] If $d = 1$ and $(\zeta^n
\xi_{\frac{er}{p}}^{- 1})^{\frac{r}{p}} \neq 1$, then $\mathcal{S}_{\zeta}^V
(\gamma G) \cong Q_n (r, \{0\}) \backslash \{ \hat{0} \}.$ 
\item[(iv)] If $d = 1, r = p \neq 1, \text{and \
$\zeta^n = \xi_e$} $, then $\mathcal{S}_{\zeta}^V (\gamma G) \cong Q_n (r, \{1\})$.
\item[(v)] If $d = 1,$ $r = p = 1$, then $m = 1$ and
$\mathcal{S}_{\zeta}^V (\gamma G) \cong Q_{n - 1} (1) \cong
\Pi_n$.
\item[(vi)] If $d = 1$ and the conditions of (iii)-(v) are
not satisfied, then $\mathcal{S}_{\zeta}^V (\gamma G) \cong Q_n
(r)$.
  \end{itemize}
\end{theorem}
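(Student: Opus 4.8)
The plan is to construct an explicit map from $\mathcal{S}_{\zeta}^V(\gamma G)$ into a Dowling lattice $Q_n(dr)$ by analysing the eigenspace equations, and then identify the image with the appropriate sublattice. First I would fix $g = (h;\sigma) \in \gamma G$ written in the standard monomial representation, so that $g e_i = \theta_{\sigma(i)} e_{\sigma(i)}$ for suitable roots of unity $\theta_j$, with the constraint coming from membership in the coset $\gamma G$: the product of the nonzero entries must equal $\xi_{\frac{er}{p}}$ times an $\frac{r}{p}$-th root of unity (here the twisting by $\gamma = \tmop{diag}(\xi_{\frac{er}{p}},1,\ldots,1)$ enters). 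A vector $v = \sum v_i e_i$ lies in $V(g,\zeta)$ iff for each cycle $(i_1\, i_2\, \cdots\, i_\ell)$ of $\sigma$ the coordinates $v_{i_1},\ldots,v_{i_\ell}$ satisfy a chain of equations $v_{i_{t}} = \zeta^{-1}\theta_{i_t}^{-1} v_{i_{t-1}}$ wrapping around, which is consistent and gives a one-dimensional space of solutions precisely when the product of the $\zeta^{-1}\theta_{i_t}$ around the cycle is $1$, i.e. when $\prod_t \theta_{i_t} = \zeta^{\ell}$; otherwise all $v_{i_t}=0$ on that cycle. This is exactly the combinatorial content that lets us read off a $\mu_{dr}$-partition: cycles on which the product condition holds and $\ell$ is a multiple of $d$ become nonzero blocks (with $\gamma$-labels recording the relevant ratios of roots of unity, rescaled so the colours are $d$-evenly distributed because $\zeta$ has order a multiple of $d$), while all other coordinates are forced to zero and amalgamate into the zero block.

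Next I would check that this assignment $g \mapsto \overline{(I,\pi,\gamma)}$ descends to a well-defined order-preserving bijection between $\mathcal{S}_{\zeta}^V(\gamma G)$ (eigenspaces, ordered by reverse inclusion) and the set of $\mu_{dr}$-partitions satisfying the divisibility and even-colouring constraints: larger eigenspaces correspond to finer partitions, since intersecting eigenspaces corresponds to merging blocks. The divisibility-by-$d$ constraint is forced because a cycle contributing a nonzero block must have length $\equiv 0 \pmod d$ (this is where $d = m/\gcd(m,r)$ is used — the product-of-entries condition modulo the relevant root-of-unity group forces the cycle length to be divisible by $d$); the $d$-even-colouring constraint is forced similarly by tracking the $\gcd(m,r)$-part versus the $d$-part of $\zeta$. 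Then the case division is just bookkeeping: (v) is the trivial group $G(1,1,1)$-type degeneration; (iv) and (iii) are the subtle cases, where the extra coset-twist condition $(\zeta^n\xi_{\frac{er}{p}}^{-1})^{r/p} \neq 1$ (respectively $=1$) controls whether the zero block is allowed to have size $0$, i.e.\ whether the ``all of $[n]$ in one nonzero block'' partition survives — this is the difference between $Q_n(\cdot,\{0\})$, $Q_n(\cdot,\{1\})$ and the full lattice; (i) and (ii) handle $d>1$ with the same dichotomy; and (vi) is the generic case.

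The main obstacle I expect is the careful treatment of the labelling $\gamma$ and the even-colouring condition in cases (i)–(ii): one must choose the rescaling of the cycle-data into $\mathbb{Z}/dr\mathbb{Z}$ correctly so that (a) the equivalence relation on $\mu_{dr}$-prepartitions matches exactly the ambiguity in choosing a basis vector for each one-dimensional cyclic eigenspace component, and (b) the resulting colourings are genuinely $d$-evenly distributed, which requires separating the order of $\zeta$ into its part dividing $r$ and its complementary part $d$ and verifying the count of each residue class is $\ell/d$. Once this is done, order-preservation in both directions is straightforward from the merge/intersection correspondence, and the identification of the image with the posets of Definitions \ref{definition:(Q_n(r,d)}, \ref{definition:Q_n(r,d,k)} and \ref{definition:Q_n(r,d,k,J)} is immediate from their definitions. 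The secondary subtlety is verifying that the zero-block-size constraint in (iii)–(iv) is precisely $|I| \notin \{0\}$ or $\{1\}$ and not some larger set — this reduces to showing that the only ``missing'' eigenspace is the one coming from the coset condition failing on a single maximal cycle, which is a short computation with the constraint $(\zeta^n \xi_{\frac{er}{p}}^{-1})^{r/p}$.
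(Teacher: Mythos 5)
Your overall strategy is the same as the paper's: analyse the cycle equations to see that $V(x,\zeta)$ is cut out by ``$d$-cycles'' together with vanishing conditions, build from this an injective, order-preserving map into $Q_n(dr)$ whose image lies in the $d$-divisible, $d$-evenly coloured lattice $Q_n(dr,d,d)$, and then determine which zero-block sizes occur. That forward half of your plan (cycle analysis, the label construction, well-definedness up to the $G$-partition equivalence, even colouring via splitting $\mu_{dr}$ as powers of $\zeta$ times powers of $\omega$, and order preservation via merging $=$ intersecting) matches the paper's argument closely.

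The gap is in the surjectivity half, which you describe as ``just bookkeeping'' and ``immediate from the definitions.'' To show the image is all of the claimed subposet one must, for each admissible $\overline{(I,\pi,\gamma)}$, actually construct $x=(\Omega,\sigma)\in\gamma G(r,p,n)$ with $\tau(V(x,\zeta))=\overline{(I,\pi,\gamma)}$. The nonzero blocks essentially dictate $\Omega$ on $[n]\setminus I$, but on the zero block one must choose cycle structure and entries so that simultaneously (a) the total product satisfies the coset condition $(\omega^{\sum_s i_s}\xi_{er/p}^{-1})^{r/p}=1$ and (b) no zero-block coordinate acquires a nontrivial solution; the exact case list (i)--(vi) is generated by this interaction, not by the single mechanism you invoke. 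In particular your account of case (iv) is incorrect: the exclusion of $|I|=1$ when $r=p\neq 1$ and $\zeta^n=\xi_e$ is not about whether the ``all of $[n]$ in one nonzero block'' element survives (that is the $|I|=0$ issue of cases (i) and (iii)); it arises because with $r=p$ the entry on a singleton zero block is forced to be $\zeta^{-(n-1)}\xi_e$, which equals $\zeta$ precisely when $\zeta^n=\xi_e$, so that coordinate cannot be forced to zero. Moreover, when $r=p$, $|I|>1$ and $\zeta^n=\xi_e$, realizing the partition requires a further device (the paper puts a $2$-cycle inside the zero block to dodge the forced label $\zeta$); nothing in your plan supplies this, and without such constructions you cannot conclude that only $|I|\in\{0\}$ (resp.\ $\{1\}$) is excluded rather than some larger set --- which is exactly the point you flag but do not resolve.
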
  

  \begin{proof}
    The proof of this theorem occupies the next several pages. \ Before
  embarking on the proof, we provide an outline. \ The first step is to
  describe the eigenspace $V (x, \zeta)$ for $x \in \gamma G$. \ This is Lemma
  \ref{lemma:eigenspaces}. \ Next, we define a map $\tau :
  \mathcal{S}_{\zeta}^V (\gamma G) \rightarrow Q_n (dr)$ which associates to
  each eigenspace $V (x, \zeta)$ an element of the Dowling lattice $Q_n (dr)
  .$ \ It will be clear by construction that $\tau (
  \mathcal{S}_{\zeta}^V (\gamma G)) \subseteq Q_n (dr, d) .$ \ In fact (Lemma
  \ref{lemma:Q_n(drdd)}) it is true that $\tau ( \mathcal{S}_{\zeta}^V (\gamma
  G)) \subseteq Q_n (dr, d, d) .$ \ Up to this point the proof is case free. \
  The next series of Lemmas (until Lemma \ref{lemma:partition}) provide
  restrictions on the size of the image of $\tau$. \ These are case dependent.
  \ Finally it is shown that the image of $\tau$ is in fact as large as
  possible, subject to the restrictions given in the previous lemmas.
    
We first describe explicitly the map $\Gamma : \gamma G \rightarrow
    \text{$\mathcal{S}_{\zeta}^V (\gamma G)$}$ which sends $x \in \gamma G$ to
    $V (x, \zeta)$. Recalling that $\gamma$ is a diagonal matrix, it makes
    sense to write $x = ( \Omega, \sigma)$, where $\Omega = (\omega^{i_1},
    \omega^{i_2}, \ldots, \omega^{i_n}) \in (\mu_r)^n$, $\omega$ is a
    primitive $r$-th root of unity, and $\sigma \in \tmop{Sym} (n)$. Note
    that since $G = G (r, p, n)$ and $\gamma = \tmop{diag}
    (\xi_{\frac{er}{p}}, 1, \ldots, 1)$, it is equivalent to say that
    $(\omega^{\sum_{s = 1}^n i_s} \xi^{- 1}_{\frac{er}{p}})^{\frac{r}{p}} =
    1.$ Let $z = (z_1, z_2, \ldots, z_n) \in \mathbb{C}^n,$ and suppose
    that $z \in V (x, \zeta)$.
    
According to the action described in {\S}\ref{subsection:wreath}, if $z
    \in V (x, \zeta)$ then for each $j \in \{1, \ldots, n\}$, $\zeta z_j =
    \omega^{i_j} z_{\sigma^{- 1} (j)}$. Equivalently, $z_j = \zeta^{- 1}
    \omega^{i_j} z_{\sigma^{- 1} (j)}$.
    
Now suppose that $(j_1, j_2, \ldots, j_k)$ is a cycle of $\sigma$ (so that
    $\sigma (j_1) = j_2$ etc...). Then 
\begin{align}      
z_{j_2} &= \zeta^{- 1} \omega^{i_{j_2}} z_{j_1}\nonumber\\
z_{j_3} &= \zeta^{- 1} \omega^{i_{j_3}} z_{j_2}\nonumber\\
      &\vdots \label{equation:dcycle}\\
z_{j_k} &= \zeta^{- 1} \omega^{i_{j_k}} z_{j_{k - 1}}\nonumber\\
z_{j_1} &= \zeta^{- 1} \omega^{i_{j_1}} z_{j_k}\nonumber
    \end{align}
    Combining these equations, $z_{j_1} = \zeta^{- k} \omega^{\sum_{s = 1}^k
    i_{j_s}} z_{j_1} .$ 
    
    These equations have nontrivial solution if and only if $\zeta^k =
    \omega^{\sum_{s = 1}^k i_{j_s}}$.
    
So if nontrivial solutions exist, $\zeta^k$ must be an $r$-th root of unity
    (since $\omega$ is defined as an $r$-th root of unity). That is,
    $(\zeta^k)^r = 1$, and $k r$ is a multiple of $m.$ Thus $k$ must be a
    multiple of $d = \frac{m}{gcd(m, r)} .$

    If the system of equations (\ref{equation:dcycle}) has a nontrivial
    solution, call it a {\em $d$-cycle}. We have have therefore proved
    the following lemma:

    \begin{lemma}
      \label{lemma:eigenspaces}Suppose $G, \gamma$ and $x$ are as above.
      The equations defining $V (x, \zeta)$ consist of some set of
      $d$-cycles involving disjoint sets of variables, together with equations
      $z_{i_s} = 0$ whenever $z_{i_s}$ does not appear in a $d$-cycle. 
    \end{lemma}
    This lemma determines the map $\Gamma$ completely.
    
   The next step is to associate an element $\overline{(I, \pi, \gamma)} \in
    Q_n (dr)$ with each $V (x, \zeta)$ ($x \in \gamma G$). Note that for
    each $s \in \{1, \ldots, k\}$, the complex number $\zeta^{- 1}
    \omega^{i_{j_s}}$ is a ($dr$)-th root of unity. There is a natural way
    to associate an element $\overline{(I, \pi, \gamma)} \in Q_n (dr)$
    with such a $V (x, \zeta) .$ This map is the inverse of the one denoted
    $\rho$ in Proposition \ref{proposition:Dowlingisomorphism}, stated there
    for the special case $m = 1, \gamma = \tmop{Id}$. Write this map as as
    $\tau : \mathcal{S}_{\zeta}^V (\gamma G) \rightarrow Q_n (dr)$. We
    describe $\tau$ explicitly here.

    To define $\tau (V (x, \zeta))$ for $x \in \gamma G$, the first task is to
    describe $\pi$. If $(j_1, j_2, \ldots, j_k)$ is a cycle of $\sigma$,
    $k$ is a multiple of $d$, and $\zeta^k = \omega^{\sum_{s = 1}^k i_{j_s}}$,
    then $\{j_{1,} j_2, \ldots, j_k \}$ forms a nonzero block of $\pi$. \
    Otherwise, $j_1, j_2, \ldots, j_k$ are all in the zero block of $\pi .$

 Now let $B_i =\{j_1, j_2, \ldots, j_k \}$ be a nonzero block of $\pi$. Define $\gamma_i : B_i \rightarrow \mu_{dr}$ by:
\begin{align}
      \gamma (j_1) &= 1\nonumber\\
      \gamma (j_2) &= \zeta^{- 1} \omega^{i_{j_2}} &  &  \nonumber\\
      \gamma (j_3) &= \zeta^{- 2} \omega^{i_{j_2} + i_{j_3}}      \label{equation:bijection} \\
    &\vdots  \nonumber\\
      \gamma (j_k) &= \zeta^{- (k - 1)} \omega^{( \sum_{s = 2}^k i_{j_s})} \nonumber
    \end{align}
    Do this for each nonzero block $B_i$. Starting the cycle with a
    different element has the effect of shifting the labels by a constant
    factor, so the equivalence class $\overline{(I, \pi, \gamma)}$ is
    independent of the ordering of the cycles. This defines the map $\tau :
    \text{$\mathcal{S}_{\zeta}^V (\gamma G)$} \rightarrow Q_n (dr)$.

    It is clear from the definition that $\tau$ is injective. Thus it
    remains to determine the size of the image.

    To start, it is true by construction that $\overline{(I, \pi, \gamma)} \in
    Q_n (dr, d)$.
        
    Recall (Definition \ref{definition:Q_n(r,d,k)}) that $Q_n (r, d, k)$ is
    the `$d$-divisible, $k$-evenly coloured' Dowling lattice.
    
    \begin{lemma}\label{lemma:Q_n(drdd)}
      $\tau ( \mathcal{S}_{\zeta}^V (\gamma G))
      \subseteq Q_n (dr, d, d) .$
    \end{lemma}
    
    \begin{proof}[P{\em roof of Lemma \ref{lemma:Q_n(drdd)}}]
      Setting $r = gcd(m, r) q$, note that every element of $\mathbb{Z}_{dr}$ can
      be written uniquely in the form $aq + bd$ ($0 \leqslant a < d, 0
      \leqslant b < r$). Every element of $\mu_{dr}$ can therefore be
      uniquely written in the form $\zeta^a \omega^b$ ($0 \leqslant a < d, 0
      \leqslant b < r$) via the bijection $\zeta^a \omega^b \leftrightarrow aq
      + bd$. Identifying $\mu_{dr}$ with $\mathbb{Z}_{d r}$ in this way,
      the elements $\zeta^a \omega^b$ ($a$ fixed, $0 \leqslant a < d, 0
      \leqslant b < r$) all lie in the same congruence class modulo $d$. In
      other words, the congruence class modulo $d$ of $\zeta^a
      \omega^b$ depends only on $a$. By construction the labels in each
      block $\overline{(B_i, \gamma_i)}$ are therefore $d$-evenly coloured,
      and so $\overline{(I, \pi, \gamma)} \in Q_n (dr, d, d)$.
    \end{proof}

  Thus far everything has been case free. \ We now consider restrictions to
  the size of the image of $\tau$ in the different cases.
    
    \begin{lemma}
      \label{lemma:dneq1}If $d \neq 1,$ then $\tau ( \mathcal{S}_{\zeta}^V
      (\gamma G)) \subseteq Q_n (dr, d, d) \setminus \{ \hat{0} \}.$
    \end{lemma}
    
    \begin{proof}[P{\em roof of Lemma \ref{lemma:dneq1}}]
      This is clear, since if $d \neq 1$ the unique minimal element $\{ \hat{0} \}$
      is added artificially, in order to make $Q_n (dr, d, d)$ a lattice.
    \end{proof}

 Note that this lemma applies to both cases (i) and (ii). \ In particular if
  the conditions of (ii) are satisfied then $\tau ( \mathcal{S}_{\zeta}^V
  (\gamma G)) \subseteq Q_n (dr, d, d) \backslash \{ \hat{0} \}$ as claimed.
    
    Recall (Definition \ref{definition:Q_n(r,d,k,J)}) that $Q_n (r, d, k, J)$
    is the subposet of $Q_n (r, d, k)$ consisting of elements $\overline{(I,
    \pi, \gamma)}$ whose zero block $I$ satisfies $\left| I \right| \nin J
    \subseteq \{0, 1, \ldots, n\}$, together with the unique minimal element $\{ \hat{0} \}$ of $Q_n
    (r, d, k)$.
    
    \begin{lemma}\label{lemma:ncong0}
      If $n \equiv 0\; (\tmop{mod} \;d)$ and $(\zeta^n
      \xi_{\frac{er}{p}}^{- 1})^{\frac{r}{p}} \neq 1$, then $\tau (
      \mathcal{S}_{\zeta}^V (\gamma G)) \subseteq Q_n (dr, d, d, \{0\})$.
    \end{lemma}
    
    \begin{proof}[P{\em roof of Lemma \ref{lemma:ncong0}}]
      Suppose that the conditions of the lemma hold, and that $\overline{(I,
      \pi, \gamma)} = \tau (V (x, \zeta))$, for some $x \in \gamma G.$ As
      above, write $x = ( \Omega, \sigma)$, where $\Omega = (\omega^{i_1},
      \omega^{i_2}, \ldots, \omega^{i_n}) \in (\mu_r)^n$, $\omega$ is a
      primitive $r$-th root of unity, and $\sigma \in \tmop{Sym} (n)$. As
      remarked previously, since $G = G (r, p, n)$ and $\text{$\gamma =
      \tmop{diag} (\xi_{\frac{er}{p}}, 1, \ldots, 1)$,}$ it follows that
      $(\omega^{\sum_{s = 1}^n i_s} \xi^{- 1}_{\frac{er}{p}})^{\frac{r}{p}} =
      1.$ If $j = (j_1, j_2, \ldots j_k)$ is a cycle of $\sigma$ which
      contributes a nonzero block to $\overline{(I, \pi, \gamma)}$, it follows
      by the argument preceeding Lemma \ref{lemma:eigenspaces} that the
      cycle-product $\omega^{\sum_{s = 1}^k i_{j_s}} = \zeta^k .$ \
      Considering all nonzero blocks together, $\omega^{\sum_{s \nin I}^{}
      i_s} = \zeta^{n - \left| I \right|} .$ Hence if $\left| I \right| = 0,$ it
      follows that $\omega^{\sum_{s = 1}^n i_s}_{} = \zeta^n$. Hence
      $(\zeta^n \xi_{\frac{er}{p}}^{- 1})^{\frac{r}{p}} = 1$, which
      contradicts the hypothesis of the lemma. Thus if $(\zeta^n
      \xi_{\frac{er}{p}}^{- 1})^{\frac{r}{p}} \neq 1$, $\tau (
      \mathcal{S}_{\zeta}^V (\gamma G)) \subseteq Q_n (dr, d, d, \{0\})$ as
      claimed. 
    \end{proof}
    
    Lemma \ref{lemma:ncong0} applies to both cases (i) and (iii). \ Combining
  Lemmas \ref{lemma:dneq1} and \ref{lemma:ncong0}, it follows that if the
  conditions of (i) are satisfied then $\tau ( \mathcal{S}_{\zeta}^V (\gamma
  G)) \subseteq Q_n (dr, d, d, \{0\}) \backslash \{ \hat{0} \}$. \ If the
  conditions of (iii) are satisfied then $Q_n (dr, d, d, \{0\}) = Q_n (r, 1,
  1, \{0\}) = Q_n (r, \{0\})$.  Further, the unique minimal element $\{ \hat{0} \}$ of $Q_n (r, \{0\})$ does not correspond to an eigenspace, since it has zero block of size zero.  Hence $\tau ( \mathcal{S}_{\zeta}^V (\gamma
  G)) \subseteq Q_n (r, \{0\}) \backslash \{ \hat{0} \}$ as claimed.

    \begin{lemma}
      I\label{lemma:r=p}f $n \equiv 1 \;(\tmop{mod} \;d)$, $r = p$, and $\zeta^n =
      \xi_e$, then $m \mid r$ and $\tau ( \mathcal{S}_{\zeta}^V (\gamma G))
      \subseteq Q_n (dr, d, d, \{1\}) = Q_n (r, 1, 1, \{1\}) = Q_n (r, \{1\})
      .$
    \end{lemma}
    
    \begin{proof}[P{\em roof of Lemma \ref{lemma:r=p}}]
      Adopt the notation of the previous lemma. To show that $m \mid r$
      under the assumptions of the lemma, first note that $\zeta^{nr} =
      (\zeta^n)^r = (\xi_e)^r = 1$, since $e \mid p$ by definition, and in
      this case $r = p$. Also since $n \equiv 1 \;(\tmop{mod} \;d)$, we can
      write $n - 1 = jd$ for some $j \in \mathbb{Z}$. Hence $\zeta^{(n - 1)
      r} = \zeta^{(jd) r} = \zeta^{\frac{jmr}{gcd(m, r)}} =
      (\zeta^m)^{\frac{jm}{gcd(m, r)}} = 1$, noting that $\frac{jm}{gcd(m, r)} \in
      \mathbb{Z}$. Hence $\zeta^r = \zeta^{nr} \zeta^{- (n - 1) r} = 1$. \
      Since $\zeta$ is by definition a primitive $m$-th root of unity,
      it follows that $m \mid r$. In particular, $d = 1$.

      For the second part of the lemma, note that as in the proof of the
      previous lemma, $\omega^{\sum_{s \nin I}^{} i_s} = \zeta^{n - \left| I
      \right|}$. Suppose, by way of contradiction, that $\left| I \right| = 1$, and
      in particular that $I =\{t\}$ for some $t \in [n]$. Then
      $\omega^{\sum_{s \neq t} i_s} = \zeta^{n - 1}$. Since $r = p$ we
      know that $\omega^{\sum_{s = 1}^n i_s }= \xi_e$, so therefore
      $\omega^{i_t} = \zeta^{- (n - 1)} \xi_e .$ Since by assumption
      $\zeta^n = \xi_e$, the conclusion is that $\zeta = \omega^{i_t}$. 
            
      Furthermore, since $t$ is the only element of $I$, it follows that
      $(t)$ is a singleton cycle of $\sigma$. By \eqref{equation:dcycle},
      and the fact that $t \in I$, the equation $z_t = \zeta^{- 1} \omega^{i_t}
      z$ has no nontrivial solution. This is a contradiction, since
      $\zeta = \omega^{i_s}$. Hence the assumption that $\left| I \right| = 1$ is
      invalid, and $\tau ( \mathcal{S}_{\zeta}^V (\gamma G)) \subseteq Q_n
      (dr, d, d, \{1\})$, as claimed. Since $m = r$, it follows that $d =
      1$, and thus $Q_n (dr, d, d, \{1\}) = Q_n (r, 1, 1, \{1\}) = Q_n (r,
      \{1\}),$ completing the proof of Lemma \ref{lemma:r=p}.
    \end{proof}
    
      Lemma \ref{lemma:r=p} applies to case (iv). \ In particular if the
  conditions of (iv) are satisfied then $\mathcal{S}_{\zeta}^V (\gamma G)
  \subseteq Q_n (r, \{1\})$. \ Note that Lemma \ref{lemma:r=p} shows that the
  case $J =\{1\}$ only arises when $d = 1$. \ Hence there is no corresponding
  case for $d > 1$.
    
    \begin{lemma}
       \label{lemma:partition}If $d = 1, r = p = 1, \tmop{then} m = 1 \tmop{and}
    \text{$\tau ( \mathcal{S}_{\zeta}^V (\gamma G)) \subseteq Q_n (1, [n])
    \cong \Pi_n \cong Q_{n - 1} (1)$} .$
    \end{lemma}
      
\begin{proof}[P{\em roof of Lemma \ref{lemma:partition}}]
       First, it is clear that if $d = 1$
      then $m \mid r$, and so $m = 1$. \ Since $m = 1, \zeta = 1$ also. \ If
      the conditions of the lemma are satisfied, then so are the conditions of
      Lemma \ref{lemma:r=p}. \ Hence $\tau ( \mathcal{S}_{\zeta}^V (\gamma G))
      \subseteq Q_n (r, \{1\}) = Q_{_n} (1, \{1\}) .$ \ However since $r = 1,
      \omega = 1$. \ \ Hence the system of equations (\ref{equation:dcycle})
      always has a nonzero solution. \ Thus $I$ cannot contain any elements at
      all, from which it follows that $\mathcal{S}_{\zeta}^V (\gamma G))
      \subseteq Q_n (1, [n])$. \ It is clear that $Q_n (1, [n]) \cong \Pi_n
      \cong Q_{n - 1} (1)$.
      \end{proof}
          
Hence under the conditions of case (v), $\tau ( \mathcal{S}_{\zeta}^V
  (\gamma G)) \subseteq \Pi_n$. \ Finally, under the conditions of case (vi),
  $Q_n (dr, d, d) = Q_n (r, 1, 1) = Q_n (r)$, and hence by Lemma
  \ref{lemma:Q_n(drdd)}, $\tau ( \mathcal{S}_{\zeta}^V
  (\gamma G)) \subseteq Q_n (r) .$
 
   It remains to show that, in each of cases (i)-(vi) of the theorem, that
  there are no further restrictions on the image of $\tau$. \ First note that
  if $m = 1, \gamma = \tmop{Id}$ the statement reduces to Corollary
  \ref{corollary:arrangement}. \ In this case, $Q_n (r, 1, 1) = Q_n (r)$ and
  $Q_n (r, 1, 1, \{1\}) = Q (r, \{1\})$. \ Also if $r = p = m = 1$ then
  $\gamma = \tmop{Id}$ is the only possibility since $\frac{er}{p} = 1$, so
  (v) is true. \

    \ Given an element $\overline{(I, \pi, \gamma)} \in Q_n (dr, d, d)$, \ we
  must find some $x = (\Omega, \sigma) \in \gamma G (r, p, n)$ such that $\tau
  (V (x, \zeta)) = \overline{(I, \pi, \gamma)}$ \ Recall that $\pi = B_1 \mid
  \cdots \mid B_k$ is a partition of $[n] \backslash I$. \ Suppose $B_l
  =\{j_1, \ldots, j_{k_l} \}$ is a nonzero block of $\pi .$ \ \ We may assume
  that the map $\gamma_i$ has the form described in equations
  (\ref{equation:bijection}), reordering the $j_i$ if necessary:
  \begin{eqnarray}
    \gamma (j_1) = 1 &  &  \nonumber\\
    \gamma (j_2) = \zeta^{- 1} \omega^{i_{j_2}} &  &  \nonumber\\
    \gamma (j_3) = \zeta^{- 2} \omega^{i_{j_2} + i_{j_3}} &  &  \nonumber\\
    \ldots &  &  \nonumber\\
    \gamma (j_{k_l}) = \zeta^{- (k_l - 1)} \omega^{( \sum_{s = 2}^{k_l}
    i_{j_s})} &  &  \nonumber
  \end{eqnarray}
  Define $(j_1 \ldots, j_{k_l})$ to be a cycle of $\sigma$, and $\Omega_{j_s}
  \assign \omega^{i_{j_s}} (2 \leqslant s \leqslant k_l)$ while $\Omega_{j_1}
  \assign$ $\zeta^{k_l} \omega^{- \sum_{s_{} = 2}^{k_l} i_{j_s}} .$ \ Do this
  for each nonzero block $B_l (1 \leqslant l \leqslant k)$. \ Suppose also
  that $I =\{i_1, \ldots, i_q \}.$ \ If $\left| I \right| = 0$ then this data
  completely specifies $x.$ \ Note however that $x \in \gamma G (r, p, n)
  \Leftrightarrow (\zeta^n \xi_{\frac{er}{p}}^{- 1})^{\frac{r}{p}} = 1,$ which
  is precisely the condition in parts (i) and (iv) of the theorem. \

    If $\left| I \right| \neq 0$, first suppose $m \neq 1$. Define $(i_t)   (1
    \leqslant t \leqslant q)$ to be singleton cycles of $\sigma$. We
    must choose $\Omega_{i_t} \neq \zeta$. If $r \neq p$ there is no problem
    doing this - define $\Omega_{i_t} \assign 1 \neq \zeta (1 \leqslant t
    \leqslant l - 1)$. There are then $\frac rp$ possible choices for
    $\Omega_{i_l}$, and at least one of these is not equal to $\zeta$.

    If $r = p$, then since $m \neq 1$, define $\Omega_{i_t} \assign 1 \neq
    \zeta$ ($1 \leqslant t \leqslant l - 1$). If $\left| I \right| = 1$ then as
    above we are forced to define $\Omega_{i_1} \assign \zeta^{- (n - 1)}
    \xi_e$. This is not equal to $\zeta$ unless $\zeta^n =
    \xi_{\frac{er}{p}} = \xi_e$. This is the condition for part (iii) of the
    theorem.

    If $r = p$ and $\left| I \right| > 1$, there are two cases. If $\zeta^n \neq
    \xi_e$ then define $\Omega_{i_t} \assign 1 \neq \zeta$ ($1 \leqslant t
    \leqslant q - 1$), $\Omega_{i_q} \assign \zeta^{- (n - l)} \xi_e$. If
    $\zeta^n = \xi_e$ let $(i_t) (1 \leqslant t \leqslant q - 2)$ be singleton
    cycles of $\sigma$, and $(i_{q - 1}, i_q)$ a cycle of length 2. Define
    $\Omega_{i_t} \assign 1 \neq \zeta$ ($1 \leqslant t \leqslant q - 1$),
    $\Omega_{i_q} \assign \zeta^{- (n - l)} \xi_e$ as before. Since $\zeta^2
    \neq \zeta^{- (n - l)} \xi_e$, the equations which contribute to the zero
    block of $\overline{(I, \pi, \gamma)}$ do indeed have only the trivial
    solution. This completes the proof when $m \neq 1.$

    Now suppose $m = 1$. If $\xi_{\frac{er}{p}} \neq 1$, let $(i_1, i_2,
    \ldots, i_q)$ be a cycle of $\sigma$ of length $q$ and set $\Omega_{i_t}
    \assign 1$ ($1 \leqslant t \leqslant q - 1$), $\Omega_{i_q} \assign
    \xi_{\frac{er}{p}}$. Since $1 = \zeta^l \neq \xi_{\frac{er}{p}}$, the
    relevant equations have only the trivial solution, as required. If
    $\xi_{\frac{er}{p}} = 1$ then $\gamma G = G$, and the problem reduces to
    Corollary $\ref{corollary:arrangement}$.

    In each of the above cases it is easy to verify that $\tau (V (g, \zeta))
    = \overline{(I, \pi, \gamma)}$, as required.  This completes the proof of Theorem \ref{theorem:dowling}.
  \end{proof}

  \begin{corollary}
    \label{corollary:CM}The poset $\mathcal{S}_{\zeta}^V (\gamma G (r, p,
    n))$ is Cohen-Macaulay over $\mathbb{Z}$,
   and over any field $\mathbb{F}$.
      \end{corollary}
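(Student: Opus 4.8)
The plan is to read the corollary off Theorem~\ref{theorem:dowling} together with the structural results for Dowling-type posets established in \S\ref{subsection:Dowling}; essentially no new argument is needed. First I would split into the six cases of Theorem~\ref{theorem:dowling}. In cases (iv)--(vi) the poset $\mathcal{S}_{\zeta}^V(\gamma G)$ is isomorphic to one of the bounded lattices $Q_n(r,\{1\})=Q_n(r,1,1,\{1\})$, $\Pi_n\cong Q_{n-1}(1)$, or $Q_n(r)$. In cases (i)--(iii) it is isomorphic to one of $Q_n(dr,d,d,\{0\})\setminus\{\hat{0}\}$, $Q_n(dr,d,d)\setminus\{\hat{0}\}$, or $Q_n(r,\{0\})=Q_n(r,1,1,\{0\})$ with its artificially adjoined minimum removed. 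So it suffices to treat, for each of these bounded lattices $Q$, both $Q$ itself and $Q\setminus\{\hat{0}\}$.

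Next I would invoke the fact, recorded in \S\ref{subsection:Dowling}, that each of the relevant bounded lattices admits a recursive atom ordering: Proposition~\ref{proposition:Qnrd} for $Q_n(r,d)$, Proposition~\ref{proposition:Qnrdk} for $Q_n(r,d,k)$, Proposition~\ref{proposition:Qnrdk0} for $Q_n(r,d,k,\{0\})$ (applicable since $d\mid n$ in case (i), and with $d=k=1$ in case (iii)), the analogous result for $Q_n(r,d,k,\{1\})$, and, for $Q_n(r)$ and $\Pi_n$, the fact that every geometric lattice has one. By Theorem~\ref{theorem:recursive} each such lattice is therefore CL-shellable, hence Cohen-Macaulay over $\mathbb{Z}$ by Proposition~\ref{proposition:CMimplicationswachs}, and then its proper part $\bar{Q}$ is Cohen-Macaulay over $\mathbb{Z}$ by \cite[Corollary~2.4]{Koonin2012-5}. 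This already settles cases (iv)--(vi) directly, since there $\mathcal{S}_{\zeta}^V(\gamma G)$ is itself one of these bounded lattices.

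Finally I would bridge from $\bar{Q}$ to $\mathcal{S}_{\zeta}^V(\gamma G)$ in cases (i)--(iii). There $\mathcal{S}_{\zeta}^V(\gamma G)\cong Q\setminus\{\hat{0}\}$, in which $\hat{1}$ is still comparable to every element, so the order complex $\Delta(Q\setminus\{\hat{0}\})$ is the cone with apex the vertex $\hat{1}$ over $\Delta(\bar{Q})$. A cone over a Cohen-Macaulay complex is Cohen-Macaulay --- the link of the apex is $\Delta(\bar{Q})$, while the link of any face containing or not containing the apex is itself a cone and hence contractible --- so $\Delta(Q\setminus\{\hat{0}\})$ is Cohen-Macaulay over $\mathbb{Z}$; the degenerate subcases in which $Q\setminus\{\hat{0}\}$ is a single point or empty are trivially Cohen-Macaulay. (Alternatively one may quote \cite[Corollary~2.4]{Koonin2012-5} for the effect of deleting a single bound.) In all six cases we conclude that $\mathcal{S}_{\zeta}^V(\gamma G)$ is Cohen-Macaulay over $\mathbb{Z}$, and hence over any field $\mathbb{F}$ by the last implication in Proposition~\ref{proposition:CMimplicationswachs}.

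I do not expect any genuine obstacle here: all the substantive work lies in Theorem~\ref{theorem:dowling} and in the recursive-atom-ordering propositions, which are already available. The only points needing a little care are bookkeeping --- matching each case of the theorem with the correct proposition, and noting the collapses $Q_n(dr,d,d)=Q_n(r)$, $Q_n(dr,d,d,\{0\})=Q_n(r,\{0\})$, etc., when $d=1$ --- and the cosmetic passage between a bounded lattice and its $\hat{0}$-deletion, handled by the cone observation above.
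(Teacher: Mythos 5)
Your argument for the diagonal case matches the paper's: invoke Theorem~\ref{theorem:dowling}, the recursive atom orderings of \S\ref{subsection:Dowling}, Theorem~\ref{theorem:recursive}, and Proposition~\ref{proposition:CMimplicationswachs}; your extra care about passing between a bounded lattice $Q$, its proper part, and $Q\setminus\{\hat{0}\}$ (the cone observation) is fine and, if anything, more explicit than the paper, which treats this as immediate.

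However, there is a genuine gap: the corollary is stated for an arbitrary reflection coset $\gamma G(r,p,n)$, whereas Theorem~\ref{theorem:dowling} only covers the cosets with $\gamma = \tmop{diag}(\xi_{\frac{er}{p}},1,\ldots,1)$ up to scalar. As noted in \S\ref{subsection:G(r,p,n)posets} (citing \cite[Proposition 3.13]{BrMaMi1999} and \cite[Table D.5]{LeTa2009}), there are three exceptional reflection cosets attached to imprimitive groups that are \emph{not} of this diagonal form, and for these your reduction to the Dowling-lattice propositions simply does not apply --- there is no case of Theorem~\ref{theorem:dowling} to quote. The paper disposes of them separately, by direct computation (referring to \cite[Remark 4.2.9]{Koonin2012}). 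So your proof establishes the corollary only for the diagonal cosets; to close the gap you must either restrict the statement to that case or supply a separate verification (e.g.\ an explicit computation of the three exceptional posets) showing they too are Cohen-Macaulay over $\mathbb{Z}$ and over every field.
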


\begin{proof}
      Each of the subposets of Dowling lattices which appear in Theorem \ref{theorem:dowling} admit a recursive atom ordering, by the results in \S\ref{subsection:Dowling}.  The corollary now follows immediately from Proposition
      \ref{proposition:CMimplicationswachs} and Theorem \ref{theorem:recursive} in the case where $\gamma$ acts diagonally.

    The three exceptional cases (see \cite[Table D.5, p.278]{LeTa2009}) can be treated by direct computation.  A detailed proof can be found in \cite[Remark 4.2.9]{Koonin2012}.

\end{proof}
  
  \begin{corollary}
If $x, y \in \gamma G (r, p, n)$ and $\gamma = \tmop{diag} (\xi_{\frac{er}{p}}, 1, \ldots, 1)$ then for some $z \in \gamma G
    (r, p, n)$, $V (x, \zeta) \cap V (y, \zeta) = V (z, \zeta)$.
    \end{corollary}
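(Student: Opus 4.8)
The plan is to read the statement off Theorem \ref{theorem:dowling}, by showing that the image of $\mathcal{S}_{\zeta}^V(\gamma G)$ inside a Dowling lattice is closed under joins. Write $m$ for the order of $\zeta$ and $d=m/\gcd(m,r)$, so that the map $\tau$ of Theorem \ref{theorem:dowling} identifies $\mathcal{S}_{\zeta}^V(\gamma G)$ with one of the six explicit subposets $P$ of $Q_n(dr)$ listed there. The fact to record about $\tau$ is that, by its construction in the proof of that theorem (Lemma \ref{lemma:eigenspaces} and the formulae \eqref{equation:bijection}), it is the restriction to $\mathcal{S}_{\zeta}^V(\gamma G)$ of $\rho^{-1}$, where $\rho:Q_n(dr)\xrightarrow{\sim}\mathcal{L}(\mathcal{A}_n(dr))$ is the isomorphism of Proposition \ref{proposition:Dowlingisomorphism} (with $r$ there replaced by $dr$); equivalently $\rho(\tau(V(x,\zeta)))=V(x,\zeta)$ for all $x\in\gamma G$. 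Since $\rho$ is an isomorphism of lattices, and since in $\mathcal{L}(\mathcal{A}_n(dr))$, ordered by reverse inclusion, the join of two flats is their set-theoretic intersection, $\rho$ carries the join of $Q_n(dr)$ to intersection of subspaces. Hence it suffices to prove that $P$ is closed under the join operation of $Q_n(dr)$; call this property $(\ast)$. For then, given $x,y\in\gamma G$, applying $\rho$ to $\tau(V(x,\zeta))\vee\tau(V(y,\zeta))\in P$ gives $V(x,\zeta)\cap V(y,\zeta)\in\mathcal{S}_{\zeta}^V(\gamma G)$, i.e.\ $V(x,\zeta)\cap V(y,\zeta)=V(z,\zeta)$ for some $z\in\gamma G$.

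To establish $(\ast)$ I would run through the six cases of Theorem \ref{theorem:dowling}. In cases (i), (ii), (iii) and (vi) the subposet $P$ is, respectively, $Q_n(dr,d,d,\{0\})\setminus\{\hat 0\}$, $Q_n(dr,d,d)\setminus\{\hat 0\}$, $Q_n(r,\{0\})\setminus\{\hat 0\}$ (with $d=1$), or $Q_n(r)$ (with $d=1$), and in each of these $P$ is an \emph{upper order ideal} of $Q_n(dr)$. This is trivial for $Q_n(r)$; it is part of Proposition \ref{proposition:Qnrdk0} for $Q_n(dr,d,d,\{0\})\setminus\{\hat 0\}$; for $Q_n(dr,d,d)\setminus\{\hat 0\}$ it follows by combining the two facts, proved inside Propositions \ref{proposition:Qnrd} and \ref{proposition:Qnrdk}, that $Q_n(dr,d)\setminus\{\hat 0\}$ is an upper order ideal of $Q_n(dr)$ and $Q_n(dr,d,d)\setminus\{\hat 0\}$ is an upper order ideal of $Q_n(dr,d)$; and $Q_n(r,\{0\})\setminus\{\hat 0\}$ consists exactly of the elements whose zero block has size at least $1$, which form an upper order ideal because merging two $G$-blocks never decreases the size of the zero block. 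Any upper order ideal $P$ of a lattice is closed under the ambient join, since $a\vee b\geqslant a\in P$ forces $a\vee b\in P$; this disposes of these four cases.

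The remaining cases (iv) and (v), where $P$ equals $Q_n(r,\{1\})$ (with $d=1$) or $\Pi_n$, are \emph{not} upper order ideals of $Q_n(dr)$ --- for instance, merging a singleton nonzero block of an element of $Q_n(r,\{1\})$ into an empty zero block produces a zero block of size $1$ --- so they need a different observation: in each, $\rho(P)$ is the intersection lattice of a genuine subarrangement, hence automatically closed under intersection. In case (v) we have $r=p=m=1$, $\zeta=1$ and $\gamma=\tmop{Id}$, whence $\mathcal{S}_{\zeta}^V(\gamma G)=\mathcal{S}_1^V(\tmop{Sym}(n))\cong\Pi_n$ is the intersection lattice of the braid arrangement (Corollary \ref{corollary:arrangement}), closed under intersection. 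In case (iv), $\rho$ identifies $Q_n(r,\{1\})$ with $\mathcal{L}(\mathcal{A}_n^0(r))$ (Proposition \ref{proposition:arrangement}(ii)), and $\mathcal{L}(\mathcal{A}_n^0(r))$ is closed under intersection inside $\mathcal{L}(\mathcal{A}_n(r))$ since the intersection of two flats cut out by hyperplanes of $\mathcal{A}_n^0(r)$ is again cut out by hyperplanes of $\mathcal{A}_n^0(r)$. Transporting back through $\rho$, $P$ is closed under the join of $Q_n(r)$ in both cases, which completes $(\ast)$ and hence the proof.

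The step I expect to be the main obstacle --- really the one needing care rather than ingenuity --- is precisely this split: the ``upper order ideal'' argument fails in cases (iv) and (v), and there one must instead recognise $P$ as the intersection lattice of a subarrangement, a fact transparent from the arrangement side but not from the Dowling-lattice side. Beyond that the work is bookkeeping: checking that $\rho\circ\tau$ is the identity-on-subspaces, which is built into the construction of $\tau$ in Theorem \ref{theorem:dowling}, and keeping the reverse-inclusion conventions straight so that ``join'' on the Dowling side genuinely corresponds to ``intersection'' of eigenspaces.
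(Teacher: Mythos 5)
Your proposal is correct and takes essentially the same route as the paper's proof: both pass through the isomorphism of Theorem \ref{theorem:dowling}, identify intersection of eigenspaces with the join in the Dowling lattice, and conclude the intersection is again an eigenspace. The only difference is one of explicitness: the paper states this in two lines, while you verify case by case that the image subposets are closed under the ambient join (upper order ideals in cases (i)--(iii) and (vi), subarrangement intersection lattices in (iv)--(v)), a step the paper leaves implicit.
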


\begin{proof}
      Let $M = V (x, \zeta) \cap V (y, \zeta)$. The defining equations for
      $M$ are those for $V (x, \zeta)$ together with those for $V (y, \zeta)
      .$ Using the isomorphism established in the proof of Theorem \ref{theorem:dowling}, combining these two
      systems of equations corresponds precisely to taking joins in Dowling
      lattices. Hence $M$ is itself an eigenspace, as required.
    \end{proof}
  
  By a direct computation, we could verify this corollary for all reflection cosets associated to imprimitive groups (including the exceptional cosets).  In fact, this corollary holds general reflection cosets, not just the imprimitive ones.  This is proven in \cite[Corollary 3.2]{Koonin2012}.

\subsection{Dimensions of Representations for the Imprimitive
Groups}\label{subsection:dimensions}

The posets $\mathcal{S}_{\zeta}^V (\gamma G (r, p, n))$ are all Cohen-Macaulay
over $\mathbb{Z}$, and over any field $\mathbb{F}$ (Corollary
\ref{corollary:CM}). However all of these posets have a unique maximal
element, and possibly a unique minimal element as well, so their homology is
uninteresting -- see the remarks following Definition \ref{definition:Stilde_v(zeta)}. Hence our interest lies in the poset $\widetilde{\mathcal{S}}_{\zeta}^V
(\gamma G (r, p, n))$, which (recall Definition \ref{definition:Stilde_v(zeta)}) is
obtained from $\mathcal{S}_{\zeta}^V (\gamma G (r, p, n))$ by removing the
unique maximal element, and the unique minimal element if it exists.

\begin{remark}
  It is possible \label{remark:fullspace}to say precisely when the full space $V$ is the unique minimal element of
  $\mathcal{S}_{\zeta}^V (\gamma G (r, p, n))$.  Assume once more that $\gamma = \tmop{diag} (\xi_{\frac{er}{p}}, 1, \ldots, 1).$
\begin{align*}
\mbox{The full space $V \in \mathcal{S}_{\zeta}^V (\gamma G (r, p, n))$}&\Leftrightarrow \tmop{diag} (\zeta, \zeta, \ldots \zeta) \in \gamma G (r, p,
  n)\\
&\Leftrightarrow \zeta^n = \xi_{\frac{er}{p}} \xi_{\frac{r}{p}}\mbox{
  (this notation defined in {\S}\ref{subsection:G(r,p,n)posets})}\\
  &\qquad\qquad \mbox{ and $\zeta$ is an $r$-th root of unity}\\  
&\Leftrightarrow (\zeta^n \xi_{\frac{er}{p}}^{-
  1})^{\frac{r}{p}} = 1 \mbox{ and } d = 1.
  \end{align*}
  
\end{remark}

Where homology is taken over a field $\mathbb{F}$, we would like
information about the homology representation of $G$ on the unique
non-vanishing reduced homology of $\widetilde{\mathcal{S}}_{\zeta}^V
(\gamma G (r, p, n))$. At a minimum, we would like to know the dimension of
such a representation. If $P$ is any poset, define $\hat{P}$ to be the poset obtained from $P$ by adjoining a
unique minimal element $\hat{0}$ and a unique maximal element $\hat{1}$ (see \cite[\S2.2]{Koonin2012-5}'. 

\begin{proposition}
  Let $G$ be the group $G (r, p, n)$ acting on $V =\mathbb{C}^n$, $\gamma G$
  a reflection coset in $V$, and $\zeta$ a primitive $m$-th root of unity.
  Then $\dim \widetilde{H}_{\tmop{top}} ( \widetilde{\mathcal{S}_{}}_{\zeta}^V
  (\gamma G (r, p, n)) = \mu (
  \widehat{\widetilde{\mathcal{S}}_{\zeta}^V (\gamma G (r, p,
  n))}$. 
\end{proposition}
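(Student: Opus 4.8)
The plan is to reduce the statement to the general fact that the top reduced homology dimension of a Cohen--Macaulay poset is governed by its M\"obius function. First I would invoke Corollary~\ref{corollary:CM}, which guarantees that $\mathcal{S}_{\zeta}^V(\gamma G(r,p,n))$ is Cohen--Macaulay over any field $\mathbb{F}$. The poset $\widetilde{\mathcal{S}}_{\zeta}^V(\gamma G(r,p,n))$ is obtained from $\mathcal{S}_{\zeta}^V(\gamma G(r,p,n))$ by deleting the unique maximal element and (if present) the unique minimal element; equivalently, it is the proper part of the bounded poset one gets by adjoining a $\hat{0}$ and $\hat{1}$ to it. Since the proper part of a Cohen--Macaulay lattice is Cohen--Macaulay (this is \cite[Corollary~2.4]{Koonin2012-5}, already used in the discussion following Proposition~\ref{proposition:Qnrdk0}), the poset $\widetilde{\mathcal{S}}_{\zeta}^V(\gamma G(r,p,n))$ is itself Cohen--Macaulay over $\mathbb{F}$.

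Next I would identify the rank. By Theorem~\ref{theorem:dowling}, in each of the six cases $\mathcal{S}_{\zeta}^V(\gamma G)$ is (isomorphic to) one of the pure lattices $Q_n(dr,d,d)$, $Q_n(dr,d,d,\{0\})$, $Q_n(r,\{0\})$, $Q_n(r,\{1\})$, $Q_n(r)$, or $\Pi_n$, possibly with $\hat{0}$ removed; Propositions~\ref{proposition:Qnrd}--\ref{proposition:Qnrdk0} and the surrounding material record that each of these is pure with a recursive atom ordering. Hence $\widetilde{\mathcal{S}}_{\zeta}^V(\gamma G(r,p,n))$ is pure of some well-defined length $\ell$, and $\widehat{\widetilde{\mathcal{S}}_{\zeta}^V(\gamma G(r,p,n))}$ is a bounded ranked poset of rank $\ell+2$.

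Now I would apply Lemma~\ref{lemma:MobiusCM}: for a poset $P$ that is Cohen--Macaulay over $\mathbb{Z}$ and of rank $n$, one has $\mu(\hat{P}) = \operatorname{rank}\widetilde{H}_n(P,\mathbb{Z})$. Applying this with $P = \widetilde{\mathcal{S}}_{\zeta}^V(\gamma G(r,p,n))$ and invoking the universal coefficient theorem (the reduced homology of a Cohen--Macaulay complex is free in the top degree and zero elsewhere, so $\dim_{\mathbb{F}}\widetilde{H}_{\mathrm{top}}(P,\mathbb{F}) = \operatorname{rank}\widetilde{H}_{\mathrm{top}}(P,\mathbb{Z})$ for every field $\mathbb{F}$) gives $\dim \widetilde{H}_{\mathrm{top}}(\widetilde{\mathcal{S}}_{\zeta}^V(\gamma G(r,p,n))) = \mu(\widehat{\widetilde{\mathcal{S}}_{\zeta}^V(\gamma G(r,p,n))})$, which is exactly the claim.

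The main obstacle is bookkeeping rather than substance: one must be careful that $\hat{P}$ in the sense of Lemma~\ref{lemma:MobiusCM} (adjoin a $\hat{0}$ and $\hat{1}$ to $P$) agrees with $\widehat{\widetilde{\mathcal{S}}_{\zeta}^V(\gamma G)}$, and that the rank $n$ appearing in that lemma is indeed the top homological degree. Both are immediate from the definitions once one notes that deleting the maximal element and (if present) the minimal element of $\mathcal{S}_{\zeta}^V(\gamma G)$ and then re-adjoining $\hat{0},\hat{1}$ just returns a poset with the same M\"obius number as the original bounded lattice (a unique minimal or maximal element contributes nothing new), so no separate case analysis over the six cases of Theorem~\ref{theorem:dowling} is required here.
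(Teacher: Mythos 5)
Your proposal is correct and follows essentially the same route as the paper, which simply observes that the statement is a special case of Lemma~\ref{lemma:MobiusCM} applied to the Cohen--Macaulay poset $\widetilde{\mathcal{S}}_{\zeta}^V(\gamma G(r,p,n))$ (with Cohen--Macaulayness coming from Corollary~\ref{corollary:CM} and the passage to the proper part). Your extra remarks on matching $\hat{P}$ with $\widehat{\widetilde{\mathcal{S}}_{\zeta}^V(\gamma G)}$ and on the universal coefficient theorem are just careful elaborations of the same argument.
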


\begin{proof}
  This is a special case of Lemma \ref{lemma:MobiusCM}.
\end{proof}

Theorem \ref{theorem:dowling} can be reformulated for the poset
$\widehat{\widetilde{\mathcal{S}}_{\zeta}^V (\gamma G (r, p,
n))}$.  The case $n=1$ is trivial as if $V$ is one-dimensional, the poset $\mathcal{S}_{\zeta}^V (\gamma G
    (r, p, n))$ is either the one-element poset or the two-element totally ordered poset according to whether or not $V \in \mathcal{S}_{\zeta}^V (\gamma G  (r, p, n))$ (see Remark \ref{remark:fullspace}).  In the following proposition, assume that $n > 1$:

\begin{proposition}\label{proposition:imprimitiveMobius}
  Let $G$ be the group $G (r, p, n)$ acting on $V =\mathbb{C}^n$, $\gamma G$
  a reflection coset in $V$ with $\gamma =  \tmop{diag} (\xi_{\frac{er}{p}}, 1, \ldots, 1)$, and $\zeta$ a primitive $m$-th root of unity.
  Let $d = \frac{m}{gcd(m, r)}$, and suppose $n > 1$.  Then
\begin{itemize}
\item[(i)] If $n \equiv 0 \;(\tmop{mod} \;d)$ and $(\zeta^n \xi_{\frac{er}{p}}^{- 1})^{\frac{r}{p}} \neq 1,$ then
  $\widehat{\widetilde{\mathcal{S}}_{\zeta}^V (\gamma G)} \cong
  Q_n (dr, d, d, \{0\})$.
\item[(ii)] If $d = 1, r = p \neq 1, \text{and \
$\zeta^n = \xi_e$} $, then $\widehat{\widetilde{\mathcal{S}}_{\zeta}^V (\gamma G)} \cong Q_n (r, \{1\})$.
\item[(iii)]  If $d = 1,$ $r = p = 1$, then $m = 1$ and
$\widehat{\widetilde{\mathcal{S}}_{\zeta}^V (\gamma G)} \cong Q_{n - 1} (1) \cong
\Pi_n$.
\item[(iv)] In all other cases,
  $\widehat{\widetilde{\mathcal{S}}_{\zeta}^V (\gamma G)} \cong
  Q_n (dr, d, d)$.
\end{itemize}  
\end{proposition}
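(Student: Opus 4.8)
The plan is to deduce Proposition \ref{proposition:imprimitiveMobius} directly from Theorem \ref{theorem:dowling} by analyzing which elements are removed (or not) when we pass from $\mathcal{S}_\zeta^V(\gamma G)$ to $\widetilde{\mathcal{S}}_\zeta^V(\gamma G)$ and then re-adjoin $\hat0$ and $\hat1$. Recall that $\widehat{P}$ is $P$ with a new $\hat0$ and $\hat1$ adjoined, so $\widehat{\widetilde{\mathcal{S}}_\zeta^V(\gamma G)}$ is obtained from $\mathcal{S}_\zeta^V(\gamma G)$ by: (a) deleting the unique maximal element; (b) deleting the unique minimal element if one exists; (c) adjoining a fresh $\hat1$; (d) adjoining a fresh $\hat0$. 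Step (c) undoes step (a) up to isomorphism, since the unique maximal element of $\mathcal{S}_\zeta^V(\gamma G)$ is exactly a top element with nothing above it — so $\mathcal{S}_\zeta^V(\gamma G)$ with top removed and a new top adjoined is isomorphic to $\mathcal{S}_\zeta^V(\gamma G)$ itself. Thus the only real content is understanding the bottom: whether $\mathcal{S}_\zeta^V(\gamma G)$ has a unique minimal element (namely whether the full space $V$ occurs), and if so, comparing "delete it, then adjoin a fresh $\hat0$" against the $Q$-poset descriptions of Theorem \ref{theorem:dowling}.

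Concretely, I would organize the argument by the cases of Theorem \ref{theorem:dowling}. In cases (i) and (iii) of that theorem we have $\mathcal{S}_\zeta^V(\gamma G)\cong Q_n(dr,d,d,\{0\})\setminus\{\hat0\}$ (resp.\ $Q_n(r,\{0\})\setminus\{\hat0\}$); here the full space $V$ is \emph{not} a minimal element — indeed by Remark \ref{remark:fullspace}, $V\in\mathcal{S}_\zeta^V(\gamma G)$ would force $(\zeta^n\xi_{er/p}^{-1})^{r/p}=1$, contradicting the standing hypothesis $(\zeta^n\xi_{er/p}^{-1})^{r/p}\neq1$ — so there is no unique minimal element to remove, and $\widehat{\widetilde{\mathcal{S}}_\zeta^V(\gamma G)}$ is $\mathcal{S}_\zeta^V(\gamma G)$ with a fresh $\hat0$ and a fresh $\hat1$ adjoined; re-adjoining $\hat0$ to $Q_n(dr,d,d,\{0\})\setminus\{\hat0\}$ recovers $Q_n(dr,d,d,\{0\})$, which already has its own $\hat1$ (since $n\equiv0\ (\mathrm{mod}\ d)$ allows a single zero block of size $n$, wait — no: one must check the top of $Q_n(dr,d,d,\{0\})$). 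Here I need to be careful: the top element of $\mathcal{S}_\zeta^V(\gamma G)$ corresponds under $\tau$ to the top element of $Q_n(dr,d,d,\{0\})\setminus\{\hat0\}$, so adjoining a fresh $\hat1$ is redundant up to isomorphism, giving $\widehat{\widetilde{\mathcal{S}}_\zeta^V(\gamma G)}\cong Q_n(dr,d,d,\{0\})$. For case (iii), the same reasoning gives $Q_n(r,\{0\})$; but the proposition asserts case (iv) (i.e.\ $Q_n(dr,d,d)=Q_n(r)$ when $d=1$) in this situation — so I would reconcile this by noting $Q_n(r,\{0\})$ and $Q_n(r)$: when $0\notin$ the forbidden set the posets differ only in the zero-block-of-size-zero elements; I must check these coincide after the $\widehat{\cdot}$ operation, or more likely the proposition's case (iv) absorbs exactly those $d=1$ subcases where the extra $\{0\}$-restriction becomes vacuous or is handled by the adjoined $\hat0$. [I would work this boundary out explicitly.]

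The cases (ii) and (vi) of Theorem \ref{theorem:dowling} give $\mathcal{S}_\zeta^V(\gamma G)\cong Q_n(dr,d,d)\setminus\{\hat0\}$ (resp.\ $Q_n(r)$ when $d=1$). When $d>1$, $Q_n(dr,d,d)$ has an artificially adjoined $\hat0$ and the poset $\mathcal{S}_\zeta^V(\gamma G)$ has no unique minimal element (the minimal elements are the atoms of $Q_n(dr,d,d)$, of which there are many when $n>1$), so again $\widehat{\widetilde{\mathcal{S}}_\zeta^V(\gamma G)} = \widehat{\mathcal{S}_\zeta^V(\gamma G)} = \widehat{Q_n(dr,d,d)\setminus\{\hat0\}} \cong Q_n(dr,d,d)$, matching case (iv). When $d=1$ and none of the conditions (iii)--(v) of the theorem hold, $\mathcal{S}_\zeta^V(\gamma G)\cong Q_n(r)$, which is a genuine geometric lattice with $\hat0$ and $\hat1$; here I need to determine whether $Q_n(r)$ has a unique minimal element corresponding to the full space $V$ — by Remark \ref{remark:fullspace}, $V$ is the minimal element precisely when $(\zeta^n\xi_{er/p}^{-1})^{r/p}=1$ and $d=1$, which \emph{is} possible in case (vi). If $V$ \emph{is} minimal, we delete it and readjoin $\hat0$, getting $Q_n(r)$ back up to isomorphism; if $V$ is \emph{not} the unique minimal element, then $Q_n(r)$'s $\hat0$ is itself not an eigenspace and shouldn't be there, but we readjoin it anyway, again landing on $Q_n(r)$. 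Either way case (iv) holds. Cases (iv)/(v) of the theorem transfer verbatim since those $Q$-posets ($Q_n(r,\{1\})$, $\Pi_n$) already have both a $\hat0$ and $\hat1$, and in case (v) $Q_n(r,\{1\})=Q_n(1,\{1\})$ has $\hat0$ precisely the full-space element (which \emph{is} minimal here), so removing and re-adjoining is a no-op; I would just cite Lemma \ref{lemma:r=p} and Lemma \ref{lemma:partition}.

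The main obstacle I anticipate is the bookkeeping in case (iii) of Theorem \ref{theorem:dowling} — reconciling why $Q_n(r,\{0\})$ collapses into the generic case (iv) answer $Q_n(dr,d,d)=Q_n(r)$ in the proposition rather than appearing as its own case. My expectation is that the resolution is: when $d=1$, the poset $Q_n(r,\{0\})$ differs from $Q_n(r)$ only in whether the unique element with empty zero block (i.e.\ the partition-lattice-like element whose underlying set partition of $[n]$ has one block and empty zero part) is present — but that element, if absent from $\mathcal{S}_\zeta^V(\gamma G)$, is re-supplied as the adjoined $\hat1$ or $\hat0$ when forming $\widehat{\widetilde{\mathcal{S}}}$, so the two posets become isomorphic after the completion. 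I would verify this by a careful look at which ranks are affected, using the rank function from Proposition \ref{proposition:Qnrdk0}, and I expect the proof proper to consist largely of this rank/element comparison together with an explicit isomorphism argument for the "delete-top-then-adjoin-top" and "delete-bottom-then-adjoin-bottom" reductions.
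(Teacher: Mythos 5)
Your overall strategy is exactly the paper's: observe that removing the unique maximal element and re-adjoining a $\hat{1}$ is a no-op up to isomorphism, so $\widehat{\widetilde{\mathcal{S}}_{\zeta}^V(\gamma G)}$ is $\mathcal{S}_{\zeta}^V(\gamma G)$ itself when the latter has a unique minimal element and is $\mathcal{S}_{\zeta}^V(\gamma G)$ with a $\hat{0}$ adjoined otherwise, then read off each case from Theorem \ref{theorem:dowling} (using Remark \ref{remark:fullspace} to decide when $V$ is the unique minimal element). Your treatment of Theorem \ref{theorem:dowling} cases (i), (ii), (iv), (v) and (vi) is correct and matches the paper's (brief) argument.

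However, there is a genuine gap in your handling of case (iii) of Theorem \ref{theorem:dowling} ($d=1$, $(\zeta^n\xi_{er/p}^{-1})^{r/p}\neq 1$), and your anticipated ``main obstacle'' rests on a misreading of the proposition's case structure. Case (i) of the proposition carries no hypothesis $d>1$: when $d=1$ the condition $n\equiv 0\ (\mathrm{mod}\ d)$ is vacuous, so the situation of Theorem \ref{theorem:dowling}(iii) falls under Proposition \ref{proposition:imprimitiveMobius}(i), not (iv), and the asserted answer is $Q_n(dr,d,d,\{0\})=Q_n(r,1,1,\{0\})=Q_n(r,\{0\})$ --- which is exactly what adjoining $\hat{0}$ to $Q_n(r,\{0\})\setminus\{\hat{0}\}$ gives. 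No reconciliation with $Q_n(r)$ is needed, and the reconciliation you propose is in fact false: $Q_n(r,\{0\})$ and $Q_n(r)$ do not become isomorphic after completion, since $Q_n(r,\{0\})$ omits \emph{all} elements with empty zero block, of which there are many at every rank from $1$ to $n-1$ (for instance the $r\binom{n}{2}$ rank-one elements with one doubleton nonzero block), not a single element as you suggest. As written, your argument would therefore fail (or prove a wrong statement) in this case; the fix is simply the correct case bookkeeping above, after which your proof coincides with the paper's.
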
  
  
  \begin{proof}
The poset $\widehat{\widetilde{\mathcal{S}}_{\zeta}^V
    (\gamma G (r, p, n))}$ is isomorphic to $\mathcal{S}_{\zeta}^V (\gamma G
    (r, p, n))$ if the latter has a unique minimal element, and is equal to $\mathcal{S}_{\zeta}^V (\gamma G
    (r, p, n))$ with a unique minimal element adjoined otherwise.  In the statement of Theorem \ref{theorem:dowling}, $\widehat{\widetilde{\mathcal{S}}_{\zeta}^V
    (\gamma G (r, p, n))}$ does not have a unique minimal element in cases (i), (ii) and (iii) when $n > 1$, while it does in the remaining cases.  The result now follows immediately.
  \end{proof}

Hence in order to compute the dimension of the
unique non-vanishing reduced homology of $\widetilde{\mathcal{S}}_{\zeta}^V
(\gamma G (r, p, n))$ in all cases of Proposition \ref{proposition:imprimitiveMobius}, it will suffice to compute the M\"obius
function of the posets $Q_n (dr, d, d)$, $Q_n (dr, d, d, \{0\})$, and $Q_n (r,
\{1\})$. The last of these posets is isomorphic to $\mathcal{L}(G (r, r,
n))$ (Corollary \ref{corollary:arrangement}), so its M\"obius function is
known (Proposition \ref{proposition:arrangement}). The following section
deals with the first two cases in question.

\section{Exponential Dowling Structures}

An explicit formula for the dimension of the representation of $G$ on the
unique non-vanishing reduced homology of $\widetilde{\mathcal{S}}_{\zeta}^V
(\gamma G (r, p, n))$ (or equivalently, the M\"obius function of
$\widehat{\widetilde{\mathcal{S}}_{\zeta}^V (\gamma G (r, p,
n))})$ is not known in all cases. However generating functions for these
M\"obius functions can be found. The key results in this section are
Corollary \ref{corollary:dowlingmobius} and Corollary
\ref{corollary:dowlingmobius0}. The posets $\mathcal{S}_{\zeta}^V (\gamma G
(r, p, n))$ are examples of {\em exponential Dowling structures}. This
class of posets was introduced by Ehrenborg and Readdy in \cite{EhRe2009} as a
generalisation of the {\em exponential structures} first described by
Stanley \cite{Stanley1979}. Exponential structures are a general family of
posets modelled on the partition lattice $\Pi_n$. They allow the study of
generating functions associated to a sequence of posets $\mathbf{R}=\{R_1,
R_2, \ldots\}$ of the form $\sum_{n = 0}^{\infty} \frac{f (n) x^n}{n!M (n)}$ in
a mechanical way. The integers $M (n)$ have a combinatorial meaning - they
are the number of minimal elements in the poset $R_n$.

\begin{definition}{\cite[Definition 3.1]{EhRe2009}}  
An {\em exponential structure} $\mathbf{R}=(R_1, R_2,
  \ldots)$ is a sequence of posets such that:
\begin{itemize}  
\item[(E1)] The poset $R_n$ has a unique maximal element $\hat{1}$ and every
  maximal chain in $R_n$ contains $n$ elements.
\item[(E2)] For an element $x \in R_n$ of rank $k$, the interval $[x, \hat{1}]$
  is isomorphic to the partition lattice on $n - k$ elements, $\Pi_{n - k}$.
\item[(E3)] The lower order ideal generated by $x \in R_n$ is isomorphic to
  $R_1^{a_1} \times R_2^{a_2} \times \ldots \times R_n^{a_n}$. Define the
 {\em type} of $x$ to be the $n$-tuple $(a_1, a_2, \ldots,
  a_n)$. 
\item[(E4)] The poset $R_n$ has $M (n)$ minimal elements. The sequence $(M (1),
  M (2), \ldots)$ is called the {\em denominator sequence} of
  $\mathbf{R}$.
  \end{itemize}
\end{definition}

The generalisation to Dowling lattices due to Ehrenborg and Readdy is as
follows.

\begin{definition}{\cite[Definition 3.2]{EhRe2009}}\label{definition:expdowlingstructure}
 Let $G'$ be a finite
  group. An {\em exponential Dowling structure} $\mathbf{S}=(S_0, S_1, \ldots)$
  associated to an exponential structure $\mathbf{R}=(R_1, R_2, \ldots)$
  is a sequence of posets such that:
\begin{itemize}
\item[(D1)] The poset $S_n$ has unique maximal element $\hat{1}$ and every
  maximal chain in $S_n$ contains $n + 1$ elements.
\item[(D2)] For an element $x \in S_n, [x, \hat{1}]$ is isomorphic to the Dowling
  lattice $Q_{n - \tmop{rk} (x)} (G')$.
\item[(D3)] Each element in $S_n$ has a type $(b ; a_1, \ldots a_n)$ assigned
  such that the lower order ideal generated by $x$ is isomorphic to $S_b
  \times R_1^{a_1} \times \ldots \times R_n^{a_n}$.
\item[(D4)] The poset $S_n$ has $N (n)$ minimal elements. The sequence $(N (0),
  N (1), \ldots)$ is called the {\em denominator sequence} of $\mathbf{S}$. 
  \end{itemize}
\end{definition}

Note that $S_0$ is the poset with one element and that $N (0) = 1$.

\begin{remark}
  In this thesis, we consider the special case where $G'$ is a cyclic group.
\end{remark}

The utility of knowing that a sequence is an exponential (Dowling) structure
lies in the fact that M\"obius functions are easy to compute.  Recall (Definition \ref{definition:Mobius}) that for a bounded poset $P$, we define $\mu (P) \assign \mu ( \hat{0}, \hat{1}).$

\begin{theorem}{\cite[Corollary 3.8]{EhRe2009}}\label{theorem:exponentialdowling}
Let $\mathbf{S}=(S_0, S_1, \ldots)$ be an exponential Dowling
  structure with denominator sequence $(N (0), N (1), \ldots)$ and associated
  exponential structure $\mathbf{R}=(R_1, R_2, \ldots)$ with denominator
  sequence $(M (1), M (2), \ldots) .$ Then the M\"obius function of the
  posets $R_n \cup \{ \hat{0} \}$, respectively $S_n \cup \{ \hat{0} \}$, has
  the generating function:
\begin{align*}
    \sum_{n \geq 1} \mu (R_n \cup \{ \hat{0} \}) \cdot \frac{x^n}{M (n) \cdot
    n!} &= - \ln \left( \sum_{n \geq 0} \frac{x^n}{M (n) \cdot n!} \right) \\
    \sum_{n \geq 0} \mu (S_n \cup \{ \hat{0} \}) \cdot \frac{x^n}{N (n) \cdot
    n!} &= - \left( \sum_{n \geq 0} \frac{x^n}{N (n) \cdot n!} \right) \left(
    \sum_{n \geq 0} \frac{(s \cdot x)^n}{M (n) \cdot n!} \right)^{-
    \frac{1}{s}}, 
\end{align*}
where $s=\left|G\right|$. 
\end{theorem}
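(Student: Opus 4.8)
The plan is to convert the defining recursion of the M\"obius function, read through the structural axioms (E1)--(E4) and (D1)--(D4), into a single enumerative identity and then to recognise that identity as the stated generating-function relation via the exponential formula. I would prove the identity for the exponential structure $\mathbf{R}$ first (this recovers a theorem of Stanley) and then bootstrap to the Dowling case, since the second formula refers back to the first. Throughout I use the elementary identity $\mu_{Q\cup\{\hat{0}\}}(\hat{0},\hat{1})=-\sum_{w\in Q}\mu_Q(w,\hat{1})$ for a finite poset $Q$ with unique maximal element, which is immediate from the dual M\"obius recursion at the adjoined $\hat{0}$.

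For the first formula put $\mu_R(n):=\mu(R_n\cup\{\hat{0}\})$, $F(x):=\sum_{n\geq 0}x^n/(M(n)\,n!)$ (with $M(0):=1$) and $\Phi(x):=\sum_{n\geq 1}\mu_R(n)\,x^n/(M(n)\,n!)$. For fixed $n\geq 1$ the M\"obius recursion in $R_n\cup\{\hat{0}\}$ gives $\sum_{x\in R_n}\mu_{R_n\cup\{\hat{0}\}}(\hat{0},x)=-1$. I evaluate each summand using (E3): if $x$ has type $\lambda=(a_1,\dots,a_n)$ then $[\hat{0},x]\cong(R_1^{a_1}\times\cdots\times R_n^{a_n})\cup\{\hat{0}\}$, so by the identity above, multiplicativity of M\"obius functions over direct products, and its special case $\sum_{w\in R_i}\mu_{R_i}(w,\hat{1})=-\mu_R(i)$, one obtains $\mu_{R_n\cup\{\hat{0}\}}(\hat{0},x)=-\prod_i(-\mu_R(i))^{a_i}$, depending only on $\lambda$. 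Next I count $c^{(n)}_\lambda:=\#\{x\in R_n:\mathrm{type}(x)=\lambda\}$ by double-counting pairs $(m_0,x)$ with $m_0$ minimal in $R_n$, $m_0\leq x$ and $\mathrm{type}(x)=\lambda$: for fixed $x$ the minimal elements below it are those of $R_1^{a_1}\times\cdots\times R_n^{a_n}$, numbering $\prod_i M(i)^{a_i}$; for fixed $m_0$ axiom (E2) gives $[m_0,\hat{1}]\cong\Pi_n$, and the elements of type $\lambda$ in this filter are the $n!/\bigl(\prod_i(i!)^{a_i}a_i!\bigr)$ set partitions of $[n]$ with $a_i$ blocks of size $i$; equating gives $c^{(n)}_\lambda=M(n)\,n!/\bigl(\prod_i(i!)^{a_i}a_i!\,M(i)^{a_i}\bigr)$. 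Substituting into $\sum_\lambda c^{(n)}_\lambda\bigl(-\prod_i(-\mu_R(i))^{a_i}\bigr)=-1$, dividing by $M(n)\,n!$, and summing over $n\geq 0$ weighted by $x^n$, the left side becomes $\exp\bigl(-\sum_{i\geq 1}\mu_R(i)x^i/(M(i)\,i!)\bigr)$ by the exponential formula while the right side is $F(x)$; hence $F(x)=\exp(-\Phi(x))$, i.e.\ $\Phi(x)=-\ln F(x)$.

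The second formula is proved by the same scheme using (D1)--(D4). Put $\mu_S(n):=\mu(S_n\cup\{\hat{0}\})$, $\Psi(x):=\sum_{n\geq 0}\mu_S(n)x^n/(N(n)\,n!)$, $G_N(x):=\sum_{n\geq 0}x^n/(N(n)\,n!)$ and $s:=|G'|$. For $x\in S_n$ of type $(b;a_1,\dots,a_n)$, axiom (D3) gives $[\hat{0},x]\cong(S_b\times R_1^{a_1}\times\cdots\times R_n^{a_n})\cup\{\hat{0}\}$, and the identity above, multiplicativity, and the special cases $\sum_{w_0\in S_b}\mu_{S_b}(w_0,\hat{1})=-\mu_S(b)$ and $\sum_{w\in R_i}\mu_{R_i}(w,\hat{1})=-\mu_R(i)$ give $\mu_{S_n\cup\{\hat{0}\}}(\hat{0},x)=\mu_S(b)\prod_i(-\mu_R(i))^{a_i}$. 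The double-count of pairs $(m_0,x)$ now uses (D2) at a minimal element, where $[m_0,\hat{1}]\cong Q_n(G')$: an element of $Q_n(G')$ with zero block of size $b$ and $a_i$ nonzero blocks of size $i$ is obtained by choosing the zero block, partitioning the remaining $n-b$ points into blocks of the prescribed sizes, and labelling each nonzero block of size $i$ by an element of $G'$ up to the free right $G'$-action ($s^{i-1}$ choices), for a total of $\frac{n!}{b!\prod_i(i!)^{a_i}a_i!}\,s^{(n-b)-\sum_i a_i}$; meanwhile a fixed $x$ of that type lies above $N(b)\prod_i M(i)^{a_i}$ minimal elements (those of its lower ideal $S_b\times R_1^{a_1}\times\cdots\times R_n^{a_n}$). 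Assembling the count of elements of $S_n$ of each type, substituting, dividing by $N(n)\,n!$, summing over $n\geq 0$ weighted by $x^n$, and applying the exponential formula expresses the total as $\Psi(x)\cdot\exp\bigl(-\tfrac1s\sum_{i\geq 1}\mu_R(i)(sx)^i/(M(i)\,i!)\bigr)$; feeding in the first identity as $\sum_{i\geq 1}\mu_R(i)y^i/(M(i)\,i!)=-\ln F(y)$ turns the exponential into $F(sx)^{1/s}$, and since the summed right side equals $-G_N(x)$ one obtains $\Psi(x)\,F(sx)^{1/s}=-G_N(x)$, which is the claimed relation.

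The formal power-series manipulations are routine — the only mildly delicate point being that $F(sx)^{-1/s}$ is a well-defined formal power series because $F(0)=1$ — so the real work lies in the bookkeeping behind the two double-counts. One must verify that the notion of \emph{type} defined through the lower-ideal factorisations in (E3) and (D3) agrees with the block structure transported through the interval isomorphisms in (E2) and (D2), so that counting elements of $[m_0,\hat{1}]\cong\Pi_n$ (respectively $\cong Q_n(G')$) of a prescribed type genuinely counts the elements $x\geq m_0$ of that type in $R_n$ (respectively $S_n$). This compatibility is part of what it means to be an exponential (Dowling) structure, and making it precise is where I expect the main obstacle to lie.
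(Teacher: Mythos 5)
This statement is quoted from Ehrenborg--Readdy \cite[Corollary 3.8]{EhRe2009}; the paper gives no proof of it, so there is nothing internal to compare against. Judged on its own, your argument is correct and is essentially the standard Stanley/Ehrenborg--Readdy proof: the recursion $\sum_{x\in R_n}\mu_{R_n\cup\{\hat0\}}(\hat0,x)=-1$ (and its $S_n$ analogue), evaluation of each $\mu(\hat0,x)$ through the lower-ideal factorisations (E3)/(D3) and multiplicativity, the type-enumeration formulas obtained by double counting over minimal elements (your $c^{(n)}_\lambda$ and its Dowling analogue, which are exactly the element-of-given-type counts appearing in Stanley's and Ehrenborg--Readdy's papers), and finally the exponential formula, with the $R$-identity fed into the $S$-identity at $y=sx$ to produce $F(sx)^{1/s}$. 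The sign bookkeeping, the $s^{i-1}$ count of labelled blocks up to the free right $G'$-action, and the invertibility of $F(sx)^{1/s}$ as a formal power series are all handled correctly.

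The one point you rightly flag -- that the type of an element $x\geq m_0$, defined via the lower-ideal factorisation, matches the block data of its image under $[m_0,\hat1]\cong\Pi_n$ (resp.\ $Q_n(G')$) -- is a genuine lemma rather than a formal triviality, but it is routine: the interval $[m_0,x]$ can be computed in two ways, once inside the lower ideal $\Lambda_x\cong S_b\times\prod_i R_i^{a_i}$ using (E2)/(D2) for each factor, and once inside $[m_0,\hat1]\cong\Pi_n$ (resp.\ $Q_n(G')$), and comparing the resulting products of partition-lattice (resp.\ Dowling) factors, together with $b+\sum_i ia_i=n$ to recover $a_1$ and $b$, pins the type down. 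With that lemma written out, your proof is complete and matches the published one in approach, so it offers no shortcut but is a faithful reconstruction.
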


Let $\mathbf{R}$ be an exponential structure and $d$ a positive
integer. Stanley defined the exponential structure $\mathbf{R}^{(d)}$ by
letting $R_n^{(d)}$ be the subposet of $R_{dn}$ of all elements $x$
of type $(a_1, a_2, \ldots)$ where $a_i = 0$ unless $d$ divides
$i$.

In \cite[Example 3.4]{EhRe2009}, Ehrenborg and Readdy define an analogous
exponential Dowling structure. Let $\mathbf{S}$ be an exponential Dowling
structure associated with the exponential structure $\mathbf{R}$. Let
$S_n^{(d, e)}$ be the subposet of $S_{dn + e}$ consisting of all elements
$x$ of type $(b ; a_1, a_2, \ldots)$ such that $b \geqslant e$, $b
\equiv e \;(\tmop{mod}\; d)$ and $a_i = 0$ unless $d$ divides $i$. Then $\mathbf{S}^{(d, e)} = (S_0^{(d, e)}, S_1^{(d, e)}, \ldots)$ is an
exponential Dowling structure associated with the exponential structure
$\mathbf{R}^{(d)}$. The minimal elements of $S_n^{(d, e)}$ are the
elements of $S_{dn + e}$ having types given by $b = e$, $a_d = n$ and $a_i
= 0$ for $i \neq d.$

If $S_n$ is the Dowling lattice $Q_n (r)$ and $e < d$, then $S_n^{(d, e)}$ is
the poset $Q_{dn + e} (r, d) \setminus \{ \hat{0} \}$. We show that the
posets $Q_{dn + e} (r, d, k) \setminus \{ \hat{0} \}$ form an exponential
Dowling structure, though the underlying exponential structure is different. In the following proposition, the group $G'$ of Definition
\ref{definition:expdowlingstructure} is the cyclic group $\mu_r$.

\begin{definition}\label{definition:R'_n}
  Suppose $k, d \in \mathbb{Z}$ with $k \mid d$. Let
  $c_n = \overline{(I_n, \pi_n, \gamma_n)}$ be the element of $Q_{dn} (r, d,
  k)_{}$ defined as follows:
\begin{align*}
I_n &=\{0\},\\
\pi_n &=\{1, 2, \ldots, dn\},\\
\gamma (i) &= \lfloor \frac{(i - 1) k}{dn} \rfloor + 1\mbox{ for } i = 1, 2,
  \ldots, dn.
\end{align*}
Define $\left.R'\right._n^{(d, k)}$ to be the lower order ideal generated by $c_n$ in
  $Q_{dn} (r, d, k)$.\\ Define $\mathbf{R}'^{(d, k)} = (\left.R'\right._1^{(d, k)},\left.R'\right._2^{(d, k)},\ldots).$\\ 
Define $\mathbf{S}^{(d, e, k)} =(S_0^{(d, e, k)}, S_1^{(d, e, k)}, \ldots)$, where $S_n^{(d, e, k)}$ is
    the poset $Q_{dn + e} (r, d, k) \setminus \{ \hat{0} \}$.
\end{definition}  

\begin{lemma}\label{lemma:expstructure}
 Let $\mathbf{R}'^{(d, k)} = (\left.R'\right._1^{(d, k)},\left.R'\right._2^{(d, k)},\ldots).$
 be defined as in Definition
    \ref{definition:R'_n}. Then $\mathbf{R}'^{(d, k)}$ is an exponential
    structure. 
  \end{lemma}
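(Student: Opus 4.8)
The plan is to verify the four axioms (E1)--(E4) in the definition of an exponential structure, after first setting up a concrete combinatorial model for $\left.R'\right._n^{(d,k)}$. Since $c_n$ has empty zero block and its unique nonzero block is all of $[dn]$, any $\overline{(I,\pi,\gamma)} \leqslant c_n$ in $Q_{dn}(r,d,k)$ again has empty zero block (merging only enlarges the zero block), and on each block $B_i$ of $\pi$ the label map $\gamma_i$ is forced to be a $\mu_r$-translate of $\gamma_n|_{B_i}$, so that the class $\overline{(B_i,\gamma_i)}$ depends on $B_i$ alone. Hence $\left.R'\right._n^{(d,k)}$ is isomorphic, as a poset ordered by refinement, to the set of partitions of $[dn]$ whose blocks each have size divisible by $d$ and each meet every one of the $k$ colour-runs of $\gamma_n$ in equally many points --- with the artificial minimum of $Q_{dn}(r,d,k)$ discarded when $d>1$, so that the minimal elements become the atoms of $Q_{dn}(r,d,k)$ lying below $c_n$. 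Two small facts then underpin everything: a set $B$ is $k$-evenly coloured under any $\mu_r$-translate of $\gamma_n|_B$ precisely when it is $k$-evenly coloured under $\gamma_n|_B$ (this uses $k \mid r$), and any two admissible blocks can be merged in exactly one way that remains $\leqslant c_n$.

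For (E1), $c_n$ is the unique maximal element by construction; for purity I would use that $Q_{dn}(r,d,k)$ is graded with the rank function of Proposition \ref{proposition:Qnrdk}, together with the fact that the minimal elements of $\left.R'\right._n^{(d,k)}$ are precisely the partitions of $[dn]$ into $n$ blocks of size $d$, each meeting each colour-run in $d/k$ points --- any admissible block of size $\geqslant 2d$ can have a size-$d$ admissible sub-block split off --- so $\left.R'\right._n^{(d,k)}$ has rank $n-1$ and all maximal chains have $n$ elements. For (E2), if $x$ has rank $\ell$ then it has $n-\ell$ blocks; every coarsening of $x$ is again admissible (sums of multiples of $d$ are multiples of $d$, unions of colour-balanced sets are colour-balanced), and by the uniqueness of admissible merges the interval $[x,c_n]$ is isomorphic to the partition lattice on the blocks of $x$, i.e. to $\Pi_{n-\ell}$.

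For (E3), write the blocks of $x$ as $B_1,\dots,B_t$ of sizes $dm_1,\dots,dm_t$. An element below $x$ is an independent admissible refinement of each $B_j$, so the principal lower ideal generated by $x$ is $\prod_{j=1}^{t} P(B_j)$, where $P(B_j)$ is the refinement poset of admissible partitions of $B_j$. Because $B_j$ is $k$-evenly coloured its $k$ colour-classes all have the common size $dm_j/k$, so any colour-class-preserving bijection $B_j \to [dm_j]$ carries $P(B_j)$ isomorphically onto $\left.R'\right._{m_j}^{(d,k)}$; collecting equal values of the $m_j$ then gives $\prod_i \bigl(\left.R'\right._i^{(d,k)}\bigr)^{a_i}$ and the type $(a_1,a_2,\dots)$ of $x$. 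Axiom (E4) is then immediate: $\left.R'\right._n^{(d,k)}$ has a well-defined denominator $M(n)$, the number of partitions of $[dn]$ into $n$ size-$d$ blocks each meeting the $k$ colour-runs in $d/k$ points.

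The step I expect to be the main obstacle is the bookkeeping in the first paragraph --- establishing that descending below $c_n$ rigidifies the labels (so that $\left.R'\right._n^{(d,k)}$ really reduces to a poset of ordinary partitions with a colour-balance constraint) and, for (E3), that the local poset $P(B_j)$ depends only on $m_j$ --- both of which rest on the observation that the colour-classes of an admissible block all share the same size. Handling the artificial minimum when $d>1$ is a minor but necessary point needed to get the chain lengths in (E1) right.
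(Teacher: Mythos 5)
Your proof is correct and follows the same (and only) route as the paper, which simply declares the verification of (E1)--(E4) routine and records that the minimal elements are those of type $b=0$, $a_d=n$; you have merely written out that routine check in full. Your explicit observations — that labels below $c_n$ are rigid up to $\mu_r$-translation, that admissibility is preserved under unions, and that the artificial minimum of $Q_{dn}(r,d,k)$ must be discarded for $d>1$ to get the chain lengths in (E1) right — are exactly the points the paper leaves implicit.
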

  
  \begin{proof}
    It is routine to check axioms (E1)--(E4). The minimal elements of
    $\left.R'\right._n^{(d, k)}$ are the elements lying below $c_n \in Q_{dn} (r, d, k)$
    having type $(b ; a_1, a_2, \ldots)$, where $b = 0, a_d = n$, and $a_i = 0$ for $i \neq d.$
  \end{proof}
  
  \begin{lemma}\label{lemma:expdowlingstructure}
    Let $\mathbf{S}^{(d, e, k)}$ be defined as above.  Then
    $\mathbf{S}^{(d, e, k)}$ is an exponential Dowling structure associated
    with the exponential structure $\mathbf{R}'^{(d, k)}$.
  \end{lemma}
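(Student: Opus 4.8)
The plan is to verify the four axioms (D1)--(D4) of Definition \ref{definition:expdowlingstructure} directly for the sequence $\mathbf{S}^{(d,e,k)}$, using the structure of the $k$-evenly coloured $d$-divisible Dowling lattices established in \S\ref{subsection:Dowling}, and to check along the way that the associated exponential structure is precisely $\mathbf{R}'^{(d,k)}$ from Lemma \ref{lemma:expstructure}. Throughout I will write $n' = dn + e$ so that $S_n^{(d,e,k)} = Q_{n'}(r,d,k) \setminus \{\hat{0}\}$.

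First I would establish (D1): the unique maximal element of $Q_{n'}(r,d,k)$ is the zero block $\hat{1}$ with $I = [n']$, and by Proposition \ref{proposition:Qnrdk} the rank of $Q_{n'}(r,d,k)$ is $\lfloor n'/d \rfloor + 1 = n + 1$ when $e < d$ (note $e < d$ since we have $b \equiv e \pmod d$ with $b \geq e$ forcing $0 \le e < d$ in the definition of $S_n^{(d,e)}$), so every maximal chain has $n+1$ elements once $\hat{0}$ is removed. Actually one should be slightly careful here: with $\hat 0$ removed the maximal chains of $S_n^{(d,e,k)}$ run from an atom of $Q_{n'}(r,d,k)$ up to $\hat 1$, and since the poset is pure of rank $n+1$ these chains contain $n+1$ elements, as required. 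For (D2), given $x = \overline{(I,\pi,\gamma)} \in S_n^{(d,e,k)}$, the principal upper interval $[x,\hat{1}]$ in $Q_{n'}(r,d,k)$ is isomorphic to the full Dowling lattice $Q_{|\pi|}(\mu_r)$ — this is the same computation used in the proof of Proposition \ref{proposition:Qnrd}, since merging blocks above $x$ is unconstrained by the $d$-divisibility and $k$-even-colouring conditions (each merged block automatically has size a multiple of $d$ and remains $k$-evenly coloured) — and $|\pi| = n' - \mathrm{rk}_{Q_{n'}(r)}(x) = n - \mathrm{rk}_{S}(x)$ after accounting for the shifted rank function, giving $[x,\hat 1] \cong Q_{n - \mathrm{rk}(x)}(\mu_r)$.

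The main content is (D3). For $x = \overline{(I,\pi,\gamma)}$ with nonzero blocks $(B_1,\gamma_1),\dots,(B_\ell,\gamma_\ell)$ and zero block $I$, the lower order ideal generated by $x$ factors as a product over the blocks: refining $x$ below amounts to independently refining the restriction of the structure to each $B_i$ (a $k$-evenly coloured $d$-divisible structure on a set of size $|B_i|$, which is a multiple of $d$, so isomorphic to $\left.R'\right._{|B_i|/d}^{(d,k)}$ by the very definition of $\left.R'\right._{\,\cdot}^{(d,k)}$ as the ideal below $c_{|B_i|/d}$ — here I need the $k$-even-colouring on $B_i$ to match the colouring pattern of $c_{|B_i|/d}$, which holds because both are determined up to equivalence by the requirement that the $k$ colour classes have equal size) together with refining the zero block $I$, whose size is $\equiv e \pmod d$; the ideal below the zero block of size $de' + e$ is by definition $S_{e'}^{(d,e,k)}$ with $e' \ge 0$. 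This yields type $(e'; a_1, a_2, \ldots)$ with $a_d = \ell$ when all blocks have size exactly $d$, and in general $a_i$ counting blocks of size $i$ (all multiples of $d$), exactly matching (D3) with base poset $S_{e'}^{(d,e,k)}$ and factors $\left.R'\right._i^{(d,k)}$. Finally (D4): the minimal elements of $S_n^{(d,e,k)}$ are the atoms of $Q_{n'}(r,d,k)$, namely those $\overline{(I,\pi,\gamma)}$ with $|I| = e$ (so $n' - |I| = dn$) and $\pi$ consisting of $n$ blocks each of size $d$, each $k$-evenly coloured; counting these gives a well-defined integer $N(n)$, and one checks $N(0) = 1$ since $S_0^{(d,e,k)} = Q_e(r,d,k) \setminus\{\hat 0\}$ is a single point (its only element being the all-zero-block).

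I expect the main obstacle to be the bookkeeping in (D3): making precise the claim that the lower ideal below a block $(B_i,\gamma_i)$ of size $md$ is \emph{isomorphic} to $\left.R'\right._m^{(d,k)}$ rather than merely to some $k$-evenly coloured $d$-divisible structure, and in particular matching the colour-labelling $\gamma_i$ against the specific representative $c_m$ chosen in Definition \ref{definition:R'_n}. The resolution is that the definition of $\left.R'\right._m^{(d,k)}$ as a \emph{lower} order ideal makes it insensitive to the choice of colour representative (any two $k$-evenly coloured configurations on the same ground set lie in a common $G \wr \mathrm{Sym}$-orbit up to the right $G$-action on labels, and the lower ideal structure is carried along isomorphically), so the isomorphism type of the ideal below $(B_i,\gamma_i)$ depends only on $|B_i|$. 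Granting this, all four axioms follow, and since the factors appearing are exactly the $\left.R'\right._i^{(d,k)}$, the associated exponential structure is $\mathbf{R}'^{(d,k)}$, completing the proof.
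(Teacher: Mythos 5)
Your proposal is correct and follows the same route as the paper: the paper's proof simply declares the verification of axioms (D1)--(D4) routine and records the minimal elements, while you carry out that routine verification explicitly (including the key observations that merges preserve $d$-divisibility and $k$-even colouring, and that the lower ideal of a block depends only on its size). No gaps of substance; your treatment of (D3) is exactly the bookkeeping the paper leaves to the reader.
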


\begin{proof}
  It is routine to check axioms (D1)--(D4) for $\mathbf{S}^{(d, e,
  k)}$. The minimal elements of $S_n^{(d, e, k)}$ are the elements of $Q_{dn
  + e} (r, d, k) \setminus \{ \hat{0} \}$ having types given by $b = e, a_d =
  n$, and $a_i = 0$ for $i \neq d.$
\end{proof}

\begin{lemma}\label{lemma:atoms}
With $\mathbf{S}^{(d, e, k)},
  \mathbf{R}'^{(d, k)}$ defined as above,
\begin{align*}
    M (n) &= \frac{[( \frac{dn}{k}) !]^k}{n! [( \frac{d}{k}) !]^{kn}}\\
    N (n) &= \frac{(dn + e) !r^{(d - 1) n}}{n!e! [( \frac{d}{k}) !]^{kn}
    k^{dn}} 
  \end{align*}
 \end{lemma}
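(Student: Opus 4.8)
The plan is to count directly the minimal elements described at the end of Lemmas \ref{lemma:expstructure} and \ref{lemma:expdowlingstructure}, since by definition $M(n)$ (resp.\ $N(n)$) is the number of such minimal elements. First I would compute $M(n)$, the number of minimal elements of $\left.R'\right._n^{(d,k)}$. These are the elements lying below $c_n \in Q_{dn}(r,d,k)$ of type $b=0$, $a_d = n$, $a_i=0$ otherwise; that is, $G$-partitions whose zero block is $\{0\}$ and whose $dn$ remaining points are partitioned into $n$ nonzero blocks, each of size $d$, each $k$-evenly coloured, and in a way compatible with (i.e.\ refining) the fixed colouring $\gamma$ of $c_n$. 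Since $c_n$ assigns colour $\lfloor (i-1)k/dn\rfloor + 1$ to point $i$, the $dn$ points split into $k$ colour classes of size $dn/k$ each, and within a block of size $d$ the $k$-evenly-coloured condition forces exactly $d/k$ points of each colour. So a minimal element is obtained by, for each of the $k$ colour classes (each of size $dn/k$), partitioning it into $n$ labelled groups of size $d/k$ — then reassembling: the $i$-th nonzero block of the minimal element is the union of the $i$-th group from each colour. Choosing an \emph{ordered} such partition of a single colour class gives $\binom{dn/k}{d/k,\dots,d/k} = (dn/k)!/[(d/k)!]^n$ ways; doing this for all $k$ colours gives $\bigl((dn/k)!/[(d/k)!]^n\bigr)^k$ ordered configurations. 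Finally the $n$ resulting blocks are unordered, so I divide by $n!$, obtaining
\[
M(n) = \frac{[(dn/k)!]^k}{n!\,[(d/k)!]^{kn}},
\]
matching the claim. (One should also note the labels $\gamma$ on each block of size $d$ are then determined up to equivalence by the colouring data, so they contribute no extra factor.)

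Next I would compute $N(n)$, the number of minimal elements of $S_n^{(d,e,k)} = Q_{dn+e}(r,d,k)\setminus\{\hat 0\}$, which are the elements of type $b=e$, $a_d=n$, $a_i=0$ otherwise: a zero block of size $e$ together with $n$ nonzero blocks, each of size $d$, each $k$-evenly coloured, with arbitrary admissible $\mu_r$-labels. First choose the zero block: $\binom{dn+e}{e}$ ways. The remaining $dn$ points must be split into $n$ size-$d$ blocks; but unlike the $M(n)$ case there is now no ambient colouring to refine, so I count as follows. Choose an \emph{ordered} sequence of $n$ size-$d$ subsets: $(dn)!/(d!)^n$ ways, then divide by $n!$ for unordering. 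For each size-$d$ block, a $k$-evenly coloured $\mu_r$-labelling up to the right-$G$-equivalence: fixing the equivalence-class representative by declaring the label of the smallest element to be $0$, the remaining $d-1$ points carry $\mu_r$-labels subject to the constraint that the $k$ residue classes mod $k$ appear equally often (i.e.\ $d/k$ times each) among the $d$ labels. I expect the count of such labellings per block to be $\dfrac{(d-1)!\,r^{d-1}}{[(d/k)!]^{k}\,k^{d-1}}\cdot\dfrac{d}{?}$ — and this is exactly the step I expect to be the main obstacle: disentangling the interaction between (a) the evenly-coloured constraint on residues mod $k$, (b) the freedom of the full $\mu_r$-label (a lift of the residue), and (c) the normalisation by the right $\mu_r$-action. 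Getting the bookkeeping right so that the per-block factor, multiplied over $n$ blocks and combined with $(dn)!/(n!\,(d!)^n)$ and $\binom{dn+e}{e}$, collapses to
\[
N(n) = \frac{(dn+e)!\,r^{(d-1)n}}{n!\,e!\,[(d/k)!]^{kn}\,k^{dn}}
\]
will require care. The cleanest route is probably: the number of size-$d$ blocks (as $G$-blocks, i.e.\ up to right $\mu_r$) that are $k$-evenly coloured is $\dfrac{d!\,r^{d-1}}{[(d/k)!]^{k}\,k^{d}}$ — one can derive this by noting there are $r^{d-1}$ equivalence classes of $\mu_r$-labellings on a fixed $d$-element ordered set (fixing one label), of which the $k$-evenly-coloured ones form the fraction $\binom{d}{d/k,\dots,d/k}/k^{d} \cdot (\text{lifting factor})$; once this per-block count is pinned down, assembling the $n$ blocks over the $dn$ non-zero points gives $\dfrac{(dn)!}{n!}\cdot\dfrac{1}{(d!)^n}\cdot\left(\dfrac{d!\,r^{d-1}}{[(d/k)!]^{k}\,k^{d}}\right)^{n} = \dfrac{(dn)!\,r^{(d-1)n}}{n!\,[(d/k)!]^{kn}\,k^{dn}}$, and multiplying by $\binom{dn+e}{e} = (dn+e)!/((dn)!\,e!)$ yields the claimed formula.

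So the structure of the proof is: (1) set up the explicit description of the minimal elements from the previous two lemmas; (2) for $M(n)$, reduce to ordered set-partitions of each of the $k$ colour classes into $n$ parts of size $d/k$, then unorder; (3) for $N(n)$, factor as (choice of zero block) $\times$ (arrangement of $dn$ points into $n$ unordered size-$d$ blocks) $\times$ (number of $k$-evenly-coloured $\mu_r$-labellings per block, up to right $\mu_r$-equivalence); (4) verify the arithmetic collapses to the stated closed forms. The only genuinely delicate point is step (3)'s per-block label count — everything else is a routine multinomial computation, which, in keeping with the paper's style (``it is routine to check''), I would present compactly.
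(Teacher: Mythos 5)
Your count of $M(n)$ is correct and is essentially the paper's argument: the paper splits each colour class into $n$ \emph{unordered} parts of size $d/k$ (hence the factor $\bigl(\tfrac{1}{n!}\binom{dn/k}{d/k,\ldots,d/k}\bigr)^k$) and then pays $(n!)^{k-1}$ to match parts across colours, while you take ordered parts per colour and divide by $n!$ once at the end; the two bookkeepings agree. Your observation that the labels of the atom are forced by the labelling of $c_n$ is also exactly the paper's first remark.

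For $N(n)$ your structure (choice of zero block $\times$ arrangement into $n$ unordered size-$d$ blocks $\times$ per-block labelling count) and your final assembly are again the paper's; the one step you leave open, the per-block count, is precisely where the paper does its work, and it closes cleanly: first decide which residue class mod $k$ each of the $d$ elements receives, with exactly $d/k$ elements per class, in $\binom{d}{d/k,\ldots,d/k}=d!/[(d/k)!]^k$ ways; each element's label then has $r/k$ lifts in $\mu_r$, giving $\frac{d!}{[(d/k)!]^k}\,(r/k)^d$ labellings of the block; and since the right $\mu_r$-action on labellings of a fixed block is free and preserves the evenly-coloured condition, dividing by $r$ gives $\frac{d!\,r^{d-1}}{[(d/k)!]^k k^d}$ $G$-blocks, as you guessed. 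Your alternative normalisation (fix the label of the least element to be the identity) also works and shows your unresolved factors are benign: the remaining $d-1$ labels must supply $d/k-1$ further elements of the identity's residue class and $d/k$ of each other class, so the count is $\binom{d-1}{d/k-1,\,d/k,\ldots,d/k}(r/k)^{d-1}=\frac{d!\,r^{d-1}}{[(d/k)!]^k k^d}$; in other words the ``$?$'' in your tentative formula is $k$ and the ``lifting factor'' is $1$. With that single step filled in, your proof is complete and coincides with the paper's.
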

  
  \begin{proof}
    To compute $M (n)$, recall, as in the proof of Proposition
    \ref{lemma:expstructure}, that the minimal elements of $\left.R'\right._j^{(d)}$ are
    the elements lying below $c_n \in Q_{dn} (r, d, k)$ having type $(b ; a_1,
    a_2, \ldots)$, where $b = 0, a_d = n, \tmop{and} a_i = 0 \tmop{for} i \neq
    d.$ \

    Consider the number of ways of choosing such an atom. First note that
    labelling of elements in the atom is completely determined by the
    labelling of $x$. Therefore the only freedom in choosing an atom comes
    from the way the elements of the single nonzero block of size $dn$ in $x$
    are split up. Within this nonzero block, there are $\frac{dn}{k}$
    elements of each of $k$ colours. Pick any colour, and split these
    $\frac{dn}{k}$ elements into $n$ sets of $\frac{k}{d}$ colours. This can
    be done in $\frac{1}{n!} \tbinom{\frac{dn}{k}}{\frac{d}{k}, \frac{d}{k},
    \ldots, \frac{d}{k}}$ ($n$ $\frac{d}{k}$'s in the denominator)
    ways, since the order in which these sets are chosen is unimportant. \
    Since there are $k$ colours and we have to split the $\frac{dn}{k}$
    elements of each colour up in this way, this gives $\Bigl( \frac{1}{n!}
    \binom{\frac{dn}{k}}{\frac{d}{k}, \frac{d}{k}, \ldots, \frac{d}{k}}\Bigr)^k$
    ways of splitting the colours. We then have to form nonzero blocks of
    size $d$ in the atom by combining one set of size $\frac{d}{k}$ of each
    colour. This can be done in $(n!)^{k - 1}$ ways. Hence the total
    number of such atoms is given by
\begin{align*}
M (n) &=\left( \frac{1}{n!}
    \binom{\frac{dn}{k}}{\frac{d}{k}, \frac{d}{k}, \ldots, \frac{d}{k}}\right)^k
    (n!)^{k - 1}\\
&= \frac{[( \frac{dn}{k})
    !]^k}{n! [( \frac{d}{k}) !]^{kn}},
    \end{align*}as claimed.
       
    To compute $N (n)$, first pick the zero block (of size $e$) and
    the $n$ nonzero blocks (of size $d$) in $\frac{1}{n!}
    \binom{dn + e}{e, d, \ldots d}$ ($n$ $d$'s in the denominator)
    ways.

    Now count the number of ways of labelling the elements within each nonzero
    block. It is most convenient to ignore equivalence of labellings at
    first, and then divide by $r$ at the end to get the
    required answer. First partition the $d$ elements into
   $k$ sub-blocks of size $\frac{d}{k}$. This can be
    done in $\binom{d}{\frac{d}{k}, \frac{d}{k}, \ldots \frac{d}{k}}$
    ($k$ $\frac{d}{k}$'s in the denominator) ways. 
    Elements in the same sub-block have labels in the same congruence class
    modulo $k$. There are now $\frac{r}{k}$ choices for
    each label. After dividing by $r$, the number of ways of
    labelling each nonzero block of size $d$ is $\frac{d!r^{d -
    1}}{[( \frac{d}{k}) !]^k k^d}$.

    There are $n$ such blocks, so
\begin{align*}
      N (n) &= \frac{1}{n!} \binom{dn + e}{e, d, \ldots d} \left[ \frac{d!r^{d
      - 1}}{[( \frac{d}{k}) !]^k k^d} \right]^n\\
      &= \frac{(dn + e) !r^{(d - 1) n}}{n!e! [( \frac{d}{k}) !]^{kn} k^{dn}}, 
    \end{align*}
    as claimed.  \end{proof}
  
  For the purposes of this work, the main interest lies in the posets $Q_{dn +
  e} (dr, d, d)$ (and subposets with restricted zero block size).

  In the following lemma, the group $G'$ of Definition
  \ref{definition:expdowlingstructure} is the cyclic group $\mu_{dr}$.

  \begin{lemma}\label{lemma:atoms2}
    Let $S_n^{(d, e, d)} \tmop{be} \tmop{the} \tmop{poset} Q_{dn + e} (dr, d,
    d) \setminus \{ \hat{0} \}$. Then $M (n) = (n!)^{d - 1}$\\ and $N (n) = \frac{(dn + e) !r^{(d - 1)
    n}}{n!e!d^n}$.
  \end{lemma}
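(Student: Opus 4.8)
The plan is to obtain Lemma \ref{lemma:atoms2} as the special case $k = d$ of Lemma \ref{lemma:atoms}, together with the replacement of the underlying cyclic group $\mu_r$ by $\mu_{dr}$. Indeed, the poset $Q_{dn+e}(dr, d, d)\setminus\{\hat{0}\}$ is exactly the poset $S_n^{(d,e,k)}$ of Definition \ref{definition:R'_n} when the group $G'$ of Definition \ref{definition:expdowlingstructure} is taken to be $\mu_{dr}$ --- that is, the parameter $r$ appearing there is replaced by $dr$ --- and $k$ is set equal to $d$. The divisibility requirements of Definition \ref{definition:Q_n(r,d,k)}, namely $k \mid r$ and $k \mid d$, then become $d \mid dr$ and $d \mid d$, both of which hold; so Lemmas \ref{lemma:expstructure} and \ref{lemma:expdowlingstructure} still apply, $\mathbf{R}'^{(d,d)}$ is an exponential structure, and $\mathbf{S}^{(d,e,d)}$ is an exponential Dowling structure associated to it, with denominator sequences given by the formulas of Lemma \ref{lemma:atoms} under the substitutions $r \mapsto dr$ and $k \mapsto d$.

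First I would compute $M(n)$. The relevant formula from Lemma \ref{lemma:atoms} does not involve $r$, so only $k = d$ matters: since $dn/k = n$ and $d/k = 1$, the numerator $[(dn/k)!]^k$ becomes $(n!)^d$ and the factor $[(d/k)!]^{kn}$ becomes $1$, giving $M(n) = (n!)^d/n! = (n!)^{d-1}$. This is consistent with the combinatorial picture in the proof of Lemma \ref{lemma:atoms}: when $k = d$ each of the $d$ colour classes inside the nonzero block of $c_n$ already contains precisely $n$ elements, so the only freedom in choosing a minimal element is the $(n!)^{k-1} = (n!)^{d-1}$ ways of assembling one singleton of each colour into each block of size $d$.

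Next I would compute $N(n)$, now substituting both $r \mapsto dr$ and $k = d$ into the $N$-formula of Lemma \ref{lemma:atoms}. The factor $[(d/k)!]^{kn}$ is again $1$; the numerator power $r^{(d-1)n}$ becomes $(dr)^{(d-1)n} = d^{(d-1)n}\, r^{(d-1)n}$; and $k^{dn}$ becomes $d^{dn}$. Collecting the powers of $d$ leaves $d^{(d-1)n - dn} = d^{-n}$, so
\[
N(n) = \frac{(dn+e)!\, r^{(d-1)n}}{n!\, e!\, d^{n}},
\]
as claimed. I do not anticipate a genuine obstacle; the only point demanding care is bookkeeping --- confirming that every occurrence of the symbol $r$ in Lemma \ref{lemma:atoms} is the order of the group $G'$, so that all of them are replaced by $dr$ with none remaining as an independent parameter, and checking that the divisibility hypotheses survive the substitution, both of which are immediate.
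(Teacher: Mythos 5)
Your proposal is correct and follows exactly the paper's own route: the paper proves Lemma \ref{lemma:atoms2} as the special case of Lemma \ref{lemma:atoms} obtained by replacing $r$ with $dr$ and $k$ with $d$, which is precisely your substitution, and your arithmetic (in particular the cancellation $d^{(d-1)n}/d^{dn} = d^{-n}$) is right. You merely spell out the bookkeeping that the paper leaves implicit.
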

  
  \begin{proof}
    This is a special case of the previous lemma, obtained by replacing $r$
    with $dr$, and $k$ with $d$.
  \end{proof}

\begin{corollary}
    \label{corollary:dowlingmobius}
\[
\sum_{n \geq 0} \mu (Q_{dn + e} (dr, d, d))
      \cdot \frac{x^n e!d^n}{(dn + e) !r^{(d - 1) n}} = - \left( \sum_{n \geq
      0} \frac{x^n e!d^n}{(dn + e) !r^{(d - 1) n}} \right) \left( \sum_{n \geq
      0} \frac{x^n d^n r^n}{(n!)^{d - 1}} \right)^{- \frac{1}{dr}}
\]
\end{corollary}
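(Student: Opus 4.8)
The plan is to derive this identity as an immediate specialization of Theorem \ref{theorem:exponentialdowling}. First I would invoke Lemma \ref{lemma:expdowlingstructure} with $r$ replaced by $dr$ and $k$ replaced by $d$ (exactly the substitution made in Lemma \ref{lemma:atoms2}): this says that the sequence $\mathbf{S}^{(d, e, d)}$, whose $n$-th term is $S_n^{(d,e,d)} = Q_{dn+e}(dr, d, d) \setminus \{\hat{0}\}$, is an exponential Dowling structure associated with the exponential structure $\mathbf{R}'^{(d,d)}$. Here the finite group $G'$ of Definition \ref{definition:expdowlingstructure} is the cyclic group $\mu_{dr}$, so the parameter $s$ appearing in Theorem \ref{theorem:exponentialdowling} equals $|G'| = dr$. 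Since adjoining a minimum element to $S_n^{(d,e,d)}$ simply returns $Q_{dn+e}(dr,d,d)$, we get $\mu(S_n^{(d,e,d)} \cup \{\hat{0}\}) = \mu(Q_{dn+e}(dr,d,d))$ for every $n \geq 0$.

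Next I would substitute the denominator sequences supplied by Lemma \ref{lemma:atoms2}, namely $M(n) = (n!)^{d-1}$ for $\mathbf{R}'^{(d,d)}$ and $N(n) = \frac{(dn+e)!\, r^{(d-1)n}}{n!\, e!\, d^n}$ for $\mathbf{S}^{(d,e,d)}$, into the second generating-function identity of Theorem \ref{theorem:exponentialdowling}. One has $\frac{1}{N(n)\cdot n!} = \frac{e!\, d^n}{(dn+e)!\, r^{(d-1)n}}$, which is precisely the weight attached to $x^n$ both in the left-hand sum and in the first factor on the right-hand side of the claimed formula; and the inner series $\sum_{n\geq 0}\frac{(s x)^n}{M(n)\cdot n!}$ with $s = dr$, raised to the power $-\frac1s = -\frac1{dr}$, gives the second factor after routine simplification. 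Collecting terms yields the stated identity.

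There is no substantive obstacle here beyond careful bookkeeping. The one point that must be handled correctly is the value of $s$: because $Q_{dn+e}(dr, d, d)$ lives inside the Dowling lattice on the group $\mu_{dr}$ rather than $\mu_r$, the exponent in the second factor is $-1/(dr)$ rather than $-1/r$. All of the genuine structural content --- verifying the exponential-Dowling axioms and counting the minimal elements $M(n)$ and $N(n)$ --- has already been carried out in Lemmas \ref{lemma:expdowlingstructure} and \ref{lemma:atoms2}, so the corollary is in effect just a restatement of Theorem \ref{theorem:exponentialdowling} in the notation of $\mathbf{S}^{(d,e,d)}$.
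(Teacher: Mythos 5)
Your proposal is correct and essentially identical to the paper's proof, which likewise consists of applying Theorem \ref{theorem:exponentialdowling} to the exponential Dowling structure $\mathbf{S}^{(d,e,d)}$ (with $G'=\mu_{dr}$, hence $s=dr$) and substituting the values $M(n)=(n!)^{d-1}$ and $N(n)=\frac{(dn+e)!\,r^{(d-1)n}}{n!\,e!\,d^n}$ from Lemma \ref{lemma:atoms2}. One small caveat: direct substitution gives $\sum_{n\ge 0}\frac{(dr)^n x^n}{(n!)^{d}}$ inside the second factor, so the $(n!)^{d-1}$ printed in the corollary appears to be a typo (the subsequent example with $d=2$ uses $(k!)^2$, confirming this), and your ``routine simplification'' should flag rather than silently absorb this discrepancy.
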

    
    \begin{proof}
      Apply Theorem \ref{theorem:exponentialdowling} to the exponential
      Dowling structure $\mathbf{S}^{(d, e, d)} = (S_0^{(d, e, d)}, S_1^{(d,
      e, d)}, \ldots)$, where $S_n^{(d, e, d)} = Q_n (dr, d, d) \setminus \{
      \hat{0} \}$. The formula for $M(n)$ and $N(n)$ is given in Lemma
      \ref{lemma:atoms2}. 
    \end{proof}
    
    \begin{example}
      Consider the case $r = p = 1$, $\gamma = \tmop{Id}$, $\zeta = - 1$ in
      Theorem \ref{theorem:dowling}. Case (ii) is the applicable one. Thus
      $\mathcal{S}_{- 1}^V (A_{n - 1}) \cong Q_n (2, 2, 2) \backslash \{ \hat{0} \}$. Suppose
      further that $n=2k$ is even.

      Corollary \ref{corollary:dowlingmobius} then gives:
\begin{align*}
\sum_{k \geqslant 0} \mu (S_{- 1}^V (A_{2 k - 1})) \cdot \frac{2^k
      x^k}{(2 k) !} &= - \Bigl( \sum_{k \geqslant 0} \frac{2^k x^k}{(2 k) !}\Bigr) \Bigl(
      \sum_{k \geqslant 0} \frac{2^k x^k}{(k!)^2}\Bigr)^{- \frac{1}{2}}\\
&= -
      \Bigl(1 + x + \frac{1}{6} x^2 + \frac{1}{90} x^3 + \frac{1}{2520} x^4 +
      \frac{1}{113400} x^5 + O (x^6)\Bigr)\\
&\qquad\times\Bigl(1 - x + x^2 - \frac{10}{9} x^3 + \frac{95}{72} x^4 -
      \frac{2927}{1800} x^5 + O (x^6)\Bigr)\\
&= -
      \Bigl(1 + \frac{1}{6} x^2 - \frac{4}{15} x^3 + \frac{51}{140} x^4 -
      \frac{136}{2835} x^5 + O (x^6)\Bigr).
      \end{align*}
            This gives $\mu (S_{- 1}^V (A_1)) = 0$, $\mu (S_{- 1}^V (A_3)) = - 1$, $\mu
      (S_{- 1}^V (A_5)) = 24$, $\mu (S_{- 1}^V (A_7)) = - 918$ and $\mu
      (S_{- 1}^V (A_9)) = 54560$.  A separate but similar calculation gives the values of the M\"obius functions for $n$ odd.

    \end{example}

Ehrenborg and Readdy also developed a theory of M\"obius
    functions of {\em restricted structures}. The following material is taken directly from \cite{EhRe2009}.

    \begin{definition}
Let $I$ be a subset of $\mathbb{N}^+$, and
      $\mathbf{R}= (R_1, R_2, \ldots)$ an exponential structure. Define the {\em restricted poset $R_n^I$} to
      be the poset consisting of all elements $x \in R_n$ whose type $(a_1,
      \ldots, a_n)$ satisfies the condition that $a_i > 0$ implies $i
      \in I$. For $n \in I$ let $\mu_I (n)$ denote the M\"obius function of
      the poset $R_n^I \cup \{ \hat{0}$\}. For $n \nin I$ let $\mu_I (n) = 0$.
    \end{definition}
    
Suppose $n \in \mathbb{N}$ and define
\[
m_n : = \sum_{x \in R_n^I \cup \{ \hat{0} \}} \mu_I (
    \hat{0}, x) =1 + \sum_{x \in R_n^I} \mu_I (^{} \hat{0}, x).
    \]
    
\begin{theorem}{\cite[Theorem 4.1]{EhRe2009}}
\[
\sum_{i \in I} \mu_I (i) \frac{x^i}{M (i) \cdot i!} = - \ln
      \left( \sum_{n \geqslant 0} \frac{x^n}{M (n) \cdot n!} - \sum_{n \nin I}
      m_n \cdot \frac{x^n}{M (n) \cdot n!} \right).
 \]
\end{theorem}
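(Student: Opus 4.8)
The plan is to prove the identity by exponentiating: writing $F(x):=\sum_{n\geqslant 0}\frac{x^n}{M(n)\cdot n!}$ and $A(x):=\sum_{i\in I}\mu_I(i)\frac{x^i}{M(i)\cdot i!}$, it suffices to establish
\[
F(x)-\sum_{n\nin I}m_n\,\frac{x^n}{M(n)\cdot n!}=e^{-A(x)} .
\]
Here I adopt the standard conventions $M(0)=1$ and (taking $R_0$ to be a one-point poset) $m_0=\mu_I(\hat 0,\hat 0)+\mu_I(\hat 0,\hat 1)=1-1=0$, so that the left-hand side has constant term $1$ and the formal logarithm is legitimate. Throughout I abbreviate $u_j:=\frac{x^j}{j!\,M(j)}$, so that $A(x)=\sum_{j\in I}\mu_I(j)\,u_j$ and $x^n=\prod_j(x^j)^{a_j}$ whenever $\sum_j j a_j=n$.

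The first step is to expand $m_n$ along the type stratification of $R_n^I$. If $x\in R_n^I$ has type $(a_j)_{j\in I}$ (so $a_j=0$ for $j\nin I$ and $\sum_j j a_j=n$), then by axiom (E3), together with the fact that under the resulting isomorphism the type of $y\leqslant x$ is the concatenation of the types of the components of $y$, the lower interval $[\hat 0,x]$ in $R_n^I\cup\{\hat 0\}$ is isomorphic to $\bigl(\prod_{j\in I}(R_j^I)^{a_j}\bigr)\cup\{\hat 0\}$. In particular $\mu_I(\hat 0,x)$ depends only on the type; call it $\tilde\mu_I\bigl((a_j)\bigr)$. Using the standard count of elements of a given type in an exponential structure (\cite{Stanley1979}), namely $\#\{x\in R_n:\mathrm{type}(x)=(a_j)\}=\frac{n!\,M(n)}{\prod_j(j!\,M(j))^{a_j}a_j!}$ (which applies verbatim to $R_n^I$ once one restricts to $I$-supported types), one obtains
\[
m_n=1+\sum_{(a_j)_{j\in I}:\ \sum_j j a_j=n}\ \frac{n!\,M(n)}{\prod_{j\in I}(j!\,M(j))^{a_j}a_j!}\ \tilde\mu_I\bigl((a_j)\bigr).
\]

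The one genuinely non-formal ingredient is the evaluation of $\tilde\mu_I$, which I would isolate as a lemma: if $P_1,\dots,P_m$ are finite posets each with a top element $\hat 1_k$, then
\[
\mu_{(P_1\times\cdots\times P_m)\cup\{\hat 0\}}\bigl(\hat 0,(\hat 1_1,\dots,\hat 1_m)\bigr)=(-1)^{m+1}\prod_{k=1}^{m}\mu_{P_k\cup\{\hat 0\}}(\hat 0,\hat 1_k).
\]
The proof is short: on $P:=P_1\times\cdots\times P_m$ the function $f(y):=\mu_{P\cup\{\hat 0\}}(\hat 0,y)$ is, by the M\"obius recursion, the unique function with $\sum_{z\leqslant y}f(z)=-1$ for all $y\in P$; writing $f_k(y_k):=\mu_{P_k\cup\{\hat 0\}}(\hat 0,y_k)$ one has $\sum_{z\leqslant y}\prod_k f_k(z_k)=\prod_k\bigl(\sum_{z_k\leqslant y_k}f_k(z_k)\bigr)=(-1)^m$, so $y\mapsto(-1)^{m+1}\prod_k f_k(y_k)$ satisfies the same characterising equation, hence equals $f$; evaluate at the top. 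Applying this with the $P_k$ running over the posets $R_j^I$ with multiplicity $a_j$ gives $\tilde\mu_I\bigl((a_j)\bigr)=(-1)^{1+\sum_j a_j}\prod_{j\in I}\mu_I(j)^{a_j}$.

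Finally I would assemble the generating function. Substituting the last two displays, multiplying by $x^n$ and summing over $n\geqslant 0$, the constrained type-sum factors completely:
\[
\sum_{n\geqslant 0}\frac{m_n x^n}{M(n)\,n!}=F(x)+\sum_{(a_j)_{j\in I}\geqslant 0}(-1)^{1+\sum_j a_j}\prod_{j\in I}\frac{(\mu_I(j)\,u_j)^{a_j}}{a_j!}=F(x)-\prod_{j\in I}e^{-\mu_I(j)\,u_j}=F(x)-e^{-A(x)}.
\]
The proof is then closed by the key observation that $m_n=0$ for every $n\in I$: for such $n$ the top $\hat 1_n$ has type $(0,\dots,0,1)$, which is $I$-supported, so $R_n^I\cup\{\hat 0\}$ is a bounded poset with at least two elements and hence $\sum_x\mu_I(\hat 0,x)=0$. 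Consequently $\sum_{n\geqslant 0}\frac{m_n x^n}{M(n)n!}=\sum_{n\nin I}\frac{m_n x^n}{M(n)n!}$ (the $n\in I$ terms vanish, and the $n=0$ term, lying in the $n\nin I$ part, vanishes by convention), so $e^{-A(x)}=F(x)-\sum_{n\nin I}\frac{m_n x^n}{M(n)n!}$ and the claimed formula follows by taking $-\ln$. The main obstacle is the product-with-adjoined-bottom M\"obius lemma (getting the sign $(-1)^{m+1}$ right) and the verification that restriction to $R_n^I$ is compatible with passing to lower intervals, so that $\mu_I(\hat 0,x)$ genuinely depends only on the type; everything else is the same bookkeeping that underlies the unrestricted case (Theorem \ref{theorem:exponentialdowling}).
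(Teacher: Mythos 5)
Your proof is correct. Note that the paper itself gives no proof of this statement -- it is quoted verbatim from Ehrenborg and Readdy \cite[Theorem 4.1]{EhRe2009} -- and your argument is essentially the standard one used there and in Stanley's unrestricted case: stratify $R_n^I$ by type (using the compatibility of the (E3) isomorphism with types, Stanley's count of elements of a given type, and the product-with-adjoined-$\hat 0$ M\"obius lemma, whose sign $(-1)^{m+1}$ you get right), observe that $m_n=0$ for $n\in I$, and recognise the resulting type sum as $F(x)-e^{-A(x)}$ before taking logarithms. Your handling of the $n=0$ convention ($M(0)=1$, $m_0=0$) is exactly what is needed for the formal logarithm to make sense, so there is no gap.
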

      
There is a corresponding theorem for exponential Dowling
      structures:

      \begin{definition}
Let $I\subseteq \mathbb{N}$, and
        $J\subseteq \mathbb{Z}_{\geqslant 0}$. For an
        exponential Dowling structure $\mathbf{S}= (S_0, S_1, \ldots)$
        define the {\em restricted poset} $S_n^{I, J}$ to be all
        elements $x \in S_n$ whose type $(b ; a_1,
        \ldots a_n)$ satisfies $b \in J$ and $a_i > 0$ implies $i
        \in I$. For $n \in J$ define $\mu_{I,
        J} (n)$ to be the M\"obius function of the poset $S_n^{I, J} \cup \{
        \hat{0} \}$. For $n\nin J$ define $\mu_{I, J} (n) = 0.$
      \end{definition}
      
      For $n \in \mathbb{Z}_{\geqslant 0}$ define
\[
p_n \assign \sum_{x \in S_n^{I, J} \cup \{ \hat{0} \}}
      \mu_{I, J} ( \hat{0}, x) = 1 + \sum_{x \in S_n^{I, J}} \mu_{I, J} (
      \hat{0}, x).
      \]
 Note that if $n \in J$ then the poset $S_n^{I, J}$ has a maximal
      element, so $p_n = 0$.

      \begin{theorem}{\cite[Theorem 4.2]{EhRe2009}}
\label{theorem:restricted}
\[
\sum_{b \in J} \mu_{I, J} (b) \cdot \frac{x^b}{N (b) \cdot
        b!} = - \sum_{n \geqslant 0} \frac{x^n}{N (n) \cdot n!} +
        \sum_{n \nin J} p_n \cdot \frac{x^n}{N (n) \cdot n!}\left( \sum_{n
        \geqslant 0} \frac{(r \cdot x)^n}{M (n) \cdot n!} - \sum_{n \nin I}
        m_n \cdot \frac{(r \cdot x)^n}{M (n) \cdot n!}
        \right)^{\frac{1}{r}}
        \]
      \end{theorem}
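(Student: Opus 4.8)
The plan is to follow the derivation of the unrestricted formula of Theorem \ref{theorem:exponentialdowling} (and of Theorem 4.1 of \cite{EhRe2009} for the underlying exponential structure), working straight from the recursive definition of the M\"obius function and the type decomposition given by axioms (D1)--(D4). Recall that $p_n = \sum_{x \in S_n^{I,J}\cup\{\hat 0\}}\mu_{I,J}(\hat 0,x)$, that $p_n=0$ whenever $n\in J$ (then $S_n^{I,J}\cup\{\hat 0\}$ is a bounded poset of positive length, since the top element of $S_n$ has type $(n;0,\dots,0)$), and that for $b\in J$ one has $\mu_{I,J}(b)=\mu(S_b^{I,J}\cup\{\hat 0\})$. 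So the two families $\{\mu_{I,J}(b):b\in J\}$ and $\{p_n:n\notin J\}$ together encode all M\"obius numbers of the restricted posets, and the theorem asserts a functional equation between their exponential generating functions.

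I would first record the two structural facts that make exponential Dowling structures tractable. \emph{Enumeration by type:} combining (D1)--(D4) with (E1)--(E4), the number of $x\in S_n$ of a fixed type $(b;a_1,a_2,\dots)$, where $b+\sum_i i\,a_i=n$, is a weighted multinomial coefficient of shape $\dfrac{N(n)\,n!}{N(b)\,b!\,\prod_i M(i)^{a_i}\,a_i!\,(i!)^{a_i}}$ up to the power of $r=|G'|$ contributed by the colourings of the nonzero blocks; imposing $b\in J$ and $a_i>0\Rightarrow i\in I$ counts the elements of $S_n^{I,J}$. \emph{Multiplicativity of $\mu$:} for such an $x$, axiom (D3) identifies the order ideal below $x$ intersected with the defining conditions of $S_n^{I,J}$ with $S_b^{I,J}\times\prod_i (R_i^I)^{a_i}$; adjoining the single extra bottom element and applying the formula for the M\"obius function of a product of posets with one common $\hat 0$ adjoined (which introduces a sign $(-1)^{m+1}$ for $m$ factors) gives $\mu_{I,J}(\hat 0,x)=(-1)^{\varepsilon}\mu_{I,J}(b)\prod_i\mu_I(i)^{a_i}$ with $\varepsilon$ depending only on $\sum_i a_i$. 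In particular $\mu_{I,J}(\hat 0,x)$ depends only on the type of $x$, and since $\mu_I(i)=0$ for $i\notin I$ the inadmissible types drop out automatically.

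With these in hand I would substitute into $p_n=\sum_x\mu_{I,J}(\hat 0,x)$, group by type, divide by $N(n)\,n!$ and sum over $n$. As in the proof of Theorem 4.1 of \cite{EhRe2009} the sum factors: once the signs are absorbed, the nonzero-block part assembles into $\big(\sum_{n\ge 0}\tfrac{(rx)^n}{M(n)\,n!}-\sum_{n\notin I}m_n\tfrac{(rx)^n}{M(n)\,n!}\big)^{1/r}$ -- here Theorem 4.1 is used to identify the $m_n$-corrected inner series with $\exp\!\big(-\sum_{i\in I}\mu_I(i)(rx)^i/(M(i)\,i!)\big)$, and the exponent $1/r$ records the $|G'|$ ways a nonzero block may be merged into the zero block, exactly as the exponent $-1/s$ arises in Theorem \ref{theorem:exponentialdowling}. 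The distinguished zero-block factor contributes $\sum_{b\in J}\mu_{I,J}(b)\tfrac{x^b}{N(b)\,b!}$, the indices $b=n\notin J$ contribute the defect terms $p_n\tfrac{x^n}{N(n)\,n!}$, and the leading ``$1$'' in each $p_n$ produces $-\sum_{n\ge 0}\tfrac{x^n}{N(n)\,n!}$. Collecting these pieces and rearranging yields the identity.

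The step I expect to be the main obstacle is the bookkeeping of the second and third paragraphs combined: pinning down the sign $\varepsilon$ in the M\"obius product formula, and verifying that the $r$-factors from the colourings of the nonzero blocks combine with the $a_i!$ and $(i!)^{a_i}$ denominators so that the nonzero-block generating function emerges with argument $rx$ and exponent exactly $1/r$. This unrestricted bookkeeping is in any case forced by compatibility with Theorem \ref{theorem:exponentialdowling}, recovered by taking $I=\mathbb{N}^+$ and $J=\mathbb{Z}_{\geqslant 0}$ (so that all $m_n$ and $p_n$ vanish); the restricted case then differs only formally, in that the forbidden indices $n\notin I$ and $n\notin J$ are not summed over but re-enter through the defect series $\sum_{n\notin I}m_n\tfrac{(rx)^n}{M(n)\,n!}$ and $\sum_{n\notin J}p_n\tfrac{x^n}{N(n)\,n!}$.
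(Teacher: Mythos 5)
The paper itself contains no proof of this statement: it is quoted verbatim from Ehrenborg--Readdy (the text says the material ``is taken directly from \cite{EhRe2009}''), so the only meaningful comparison is with their derivation, and your outline does follow that route (M\"obius recursion, enumeration of elements by type, multiplicativity of $\mu_{I,J}(\hat 0,x)$ over the type decomposition, then Theorem 4.1 to recognise the $m_n$-corrected series as $\exp\bigl(-\sum_{i\in I}\mu_I(i)\,y^i/(M(i)\,i!)\bigr)$). Be aware, though, that the two steps you defer as ``bookkeeping'' are the real content: you must prove that the restricted lower order ideal of $x\in S_n^{I,J}$ of type $(b;a_1,\dots)$ is $S_b^{I,J}\times\prod_i(R_i^I)^{a_i}$ and that $\mu_{I,J}(\hat 0,x)=(-1)^{\sum_i a_i}\,\mu_{I,J}(b)\prod_i\mu_I(i)^{a_i}$ (e.g.\ via the join formula for the proper part of a product of posets with maxima), and you must establish the count of elements of a given type including the factor $r^{\sum_i(i-1)a_i}$, which is exactly what makes the argument $rx$ and the $\tfrac1r$ in the exponent appear.

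The genuine problem is the final ``rearranging''. Carried out correctly, your computation gives $p_n=1+\sum_{x\in S_n^{I,J}}\mu_{I,J}(\hat 0,x)$ for every $n$, hence the functional equation
\[
\sum_{n\nin J}p_n\,\frac{x^n}{N(n)\,n!}\;=\;\sum_{n\ge 0}\frac{x^n}{N(n)\,n!}\;+\;\Bigl(\sum_{b\in J}\mu_{I,J}(b)\,\frac{x^b}{N(b)\,b!}\Bigr)\Bigl(\sum_{n\ge 0}\frac{(rx)^n}{M(n)\,n!}-\sum_{n\nin I}m_n\,\frac{(rx)^n}{M(n)\,n!}\Bigr)^{\frac1r},
\]
and solving for the $\mu_{I,J}$-series yields
\[
\sum_{b\in J}\mu_{I,J}(b)\,\frac{x^b}{N(b)\,b!}\;=\;\Bigl(-\sum_{n\ge 0}\frac{x^n}{N(n)\,n!}+\sum_{n\nin J}p_n\,\frac{x^n}{N(n)\,n!}\Bigr)\Bigl(\cdots\Bigr)^{-\frac1r},
\]
with the corrected $N$-series difference as a single factor and exponent $-\tfrac1r$. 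This is Ehrenborg--Readdy's actual Theorem 4.2, and it is the only version compatible with Theorem \ref{theorem:exponentialdowling} (exponent $-1/s$) and with Corollary \ref{corollary:restricted2} (exponent $-1/r$), which is supposed to be its special case. The display in the statement as printed -- exponent $+\tfrac1r$, with the power attached only to the $p_n$-sum -- is a transcription error, and your sketch inherits it: you assert the assembled nonzero-block factor enters the final identity to the power $+\tfrac1r$ while simultaneously saying it arises ``exactly as the exponent $-1/s$'' does in the unrestricted theorem. As written, the last step cannot produce the formula as printed; you should either prove the corrected form above or flag the misprint explicitly.
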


      \begin{corollary}{\cite[Corollary 4.3]{EhRe2009}}
        \label{corollary:restricted2}
        Let $I \subseteq \mathbb{N}$ be a semigroup and $J \subseteq
        \mathbb{Z}_{\geqslant 0}$ such that $I + J \subseteq J$. Then the
        M\"obius function of the restricted poset $R_n^I \cup \{ \hat{0}\}$
        and $S_n^{I, J} \cup \{ \hat{0}\}$ respectively has the generating
        function
\begin{align*}
\sum_{n \in I} \mu_I (n) \cdot \frac{x^n}{M (n) \cdot n!} &= - \ln
        \left( \sum_{n \in I \cup \{0\}} \frac{x^n}{M (n) \cdot n!} \right),\\
\sum_{n \in J} \mu_{I, J} (n) \cdot \frac{x^n}{N (n) \cdot n!} &= -
        \left( \sum_{n \in J} \frac{x^n}{N (n) \cdot n!} \right) \cdot \left(
        \sum_{n \in I \cup \{0\}} \frac{(r \cdot x)^n}{M (n) \cdot n!}
        \right)^{-\frac{1}{r}}.
\end{align*}
\end{corollary}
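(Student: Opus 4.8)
The plan is to obtain Corollary \ref{corollary:restricted2} directly from Theorem \ref{theorem:restricted} (and its $\mathbf{R}$-analogue \cite[Theorem 4.1]{EhRe2009}), by showing that the semigroup hypothesis on $I$ together with the absorption hypothesis $I + J \subseteq J$ forces all the ``defect'' coefficients $m_n$ (for $n \nin I$) and $p_n$ (for $n \nin J$) to equal $1$. Granting this, the generating-function identities of Theorem \ref{theorem:restricted} collapse: the expression $\sum_{n \geqslant 0} \frac{x^n}{M(n) \cdot n!} - \sum_{n \nin I} m_n \cdot \frac{x^n}{M(n) \cdot n!}$ becomes $\sum_{n \in I \cup \{0\}} \frac{x^n}{M(n) \cdot n!}$, and the $N$-series likewise collapses to $\sum_{n \in J} \frac{x^n}{N(n) \cdot n!}$, which is exactly what the two displayed formulas assert.

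The crucial step is the grading identity: the type $(a_1, \dots, a_n)$ of an element $x \in R_n$ satisfies $\sum_i i\, a_i = n$, and the type $(b; a_1, \dots, a_n)$ of an element of $S_n$ satisfies $b + \sum_i i\, a_i = n$. (This is implicit in the definitions of \cite{Stanley1979} and \cite{EhRe2009}; it can be recovered from the axioms by induction, using that $R_1$ and $S_0$ are one-point posets and that the lower-order-ideal and rank decompositions of (E2)--(E3), resp. (D2)--(D3), are compatible.) From this I would argue: if $n \nin I$ then $R_n^I = \emptyset$, since an $x \in R_n^I$ has $a_i > 0 \Rightarrow i \in I$, so $n = \sum_i i\, a_i$ is a nonempty sum of elements of the semigroup $I$, hence $n \in I$, a contradiction. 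Similarly, if $n \nin J$ then $S_n^{I, J} = \emptyset$: an $x \in S_n^{I, J}$ has $b \in J$ and $a_i > 0 \Rightarrow i \in I$, and adjoining to $b$ the elements $i$ occurring in $\sum_i i\, a_i$ one at a time keeps us inside $J$ by $I + J \subseteq J$, so $n = b + \sum_i i\, a_i \in J$, again a contradiction. In each of these cases the poset $R_n^I \cup \{\hat{0}\}$ (resp. $S_n^{I, J} \cup \{\hat{0}\}$) is the one-element poset $\{\hat{0}\}$, so $m_n = \mu(\hat{0}, \hat{0}) = 1$ (resp. $p_n = 1$).

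With $m_n = 1$ for $n \in \mathbb{N}^+ \setminus I$ and $p_n = 1$ for $n \in \mathbb{Z}_{\geqslant 0} \setminus J$, I would substitute into \cite[Theorem 4.1]{EhRe2009} and Theorem \ref{theorem:restricted}. In the $\mathbf{R}$-identity, $\sum_{n \nin I} m_n \cdot \frac{x^n}{M(n) \cdot n!} = \sum_{n \in \mathbb{N}^+ \setminus I} \frac{x^n}{M(n) \cdot n!}$ cancels precisely the terms of $\sum_{n \geqslant 0} \frac{x^n}{M(n) \cdot n!}$ indexed by $\mathbb{N}^+ \setminus I$, leaving $\sum_{n \in I \cup \{0\}} \frac{x^n}{M(n) \cdot n!}$ inside the logarithm; this is the first formula. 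In the Dowling identity of Theorem \ref{theorem:restricted}, the same cancellation (with $x$ replaced by $r \cdot x$) collapses the $M$-factor to $\left(\sum_{n \in I \cup \{0\}} \frac{(r \cdot x)^n}{M(n) \cdot n!}\right)^{-1/r}$, while cancelling the $p_n$-terms against $-\sum_{n \geqslant 0} \frac{x^n}{N(n) \cdot n!}$ collapses the $N$-factor to $-\sum_{n \in J} \frac{x^n}{N(n) \cdot n!}$; this is the second formula. The only real obstacle is the first step --- establishing the grading identity and hence the emptiness of $R_n^I$ off $I$ and of $S_n^{I, J}$ off $J$; everything afterwards is routine manipulation of formal power series.
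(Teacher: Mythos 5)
Your proposal is correct and is essentially the intended argument: the paper gives no proof of this corollary (it is imported verbatim from \cite[Corollary 4.3]{EhRe2009}), and your route --- using the grading identities $\sum_i i\,a_i = n$ for $\mathbf{R}$ and $b + \sum_i i\,a_i = n$ for $\mathbf{S}$ together with the semigroup hypothesis on $I$ and $I+J\subseteq J$ to conclude $R_n^I=\emptyset$ for $n\nin I$ and $S_n^{I,J}=\emptyset$ for $n\nin J$, hence $m_n=1$ and $p_n=1$ there, and then collapsing \cite[Theorem 4.1]{EhRe2009} and Theorem \ref{theorem:restricted} --- is exactly how the result follows from the two theorems quoted immediately before it. One minor caveat: the paper's transcription of Theorem \ref{theorem:restricted} has misplaced parentheses and exponent $+\frac{1}{r}$; your collapse tacitly uses the correct form (the full $N$-series times the $M$-factor raised to $-\frac{1}{r}$), which is what the original source states and what the corollary's displayed formula requires.
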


Corollary \ref{corollary:restricted2} can be used to study the posets $Q_{dn} (dr, d, d, \{0\})$.

      \begin{corollary}
\label{corollary:dowlingmobius0}
        \[
        \sum_{n \geq 0} \mu (Q_{dn} (dr, d, d, \{0\})) \cdot \frac{x^n
        d^n}{(dn) !r^{(d - 1) n}} = - \left( \sum_{n \geq 1} \frac{x^n
        d^n}{(dn) !r^{(d - 1) n}} \right) \left( \sum_{n \geq 0} \frac{x^n
        d^n r^n}{(n!)^{d - 1}} \right)^{- \frac{1}{dr}}
        \]
\end{corollary}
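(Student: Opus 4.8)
The approach parallels the proof of Corollary \ref{corollary:dowlingmobius}, but invokes the restricted-structure machinery of Corollary \ref{corollary:restricted2} in place of Theorem \ref{theorem:exponentialdowling}. The plan is to exhibit each poset $Q_{dn}(dr,d,d,\{0\})$ as a bounded restricted poset $S_n^{I,J}\cup\{\hat 0\}$ of a single exponential Dowling structure. The relevant structure is $\mathbf{S}^{(d,0,d)}$ from Definition \ref{definition:R'_n}, taken --- as in Lemma \ref{lemma:atoms2} --- with $r$ replaced by $dr$ and $k$ replaced by $d$, so that $S_n^{(d,0,d)}=Q_{dn}(dr,d,d)\setminus\{\hat 0\}$, the associated group $G'$ is $\mu_{dr}$ of order $dr$, and by Lemma \ref{lemma:atoms2} (case $e=0$) the denominator sequences are $M(n)=(n!)^{d-1}$ and $N(n)=(dn)!\,r^{(d-1)n}/(n!\,d^n)$.

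The heart of the argument is the identification $S_n^{I,J}\cup\{\hat 0\}=Q_{dn}(dr,d,d,\{0\})$ for an appropriate $I$ and $J$. Under the type assignment that makes $\mathbf{S}^{(d,0,d)}$ an exponential Dowling structure (Definition \ref{definition:expdowlingstructure}), an element $x\in S_n^{(d,0,d)}$ has type $(b;a_1,\dots,a_n)$ with $b\in\{0,1,\dots,n\}$; one checks, e.g.\ by inspecting the maximal element (where $b=n$ and the zero block has size $dn$) and the minimal elements (where $b=0$ and the zero block is empty), that $b$ equals $\tfrac{1}{d}$ times the size of the zero block of $x$ viewed as a $G$-partition in $Q_{dn}(dr,d,d)$ (this is an integer in $\{0,\dots,n\}$ since all nonzero block sizes are multiples of $d$). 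Hence the condition $|I|\notin\{0\}$ that defines $Q_{dn}(dr,d,d,\{0\})$ (Definition \ref{definition:Q_n(r,d,k,J)}) is precisely the condition $b\neq 0$. Taking $J=\{1,2,3,\dots\}$ and $I=\{1,2,3,\dots\}$ makes the constraint ``$a_i>0\Rightarrow i\in I$'' vacuous, so $S_n^{I,J}$ is exactly the set of elements of $Q_{dn}(dr,d,d)\setminus\{\hat 0\}$ with nonempty zero block, and adjoining $\hat 0$ recovers $Q_{dn}(dr,d,d,\{0\})$ as a bounded poset. In particular $\mu_{I,J}(n)=\mu(Q_{dn}(dr,d,d,\{0\}))$.

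It then remains to check the hypotheses of Corollary \ref{corollary:restricted2} --- namely that $I=\{1,2,3,\dots\}$ is an additive semigroup and $I+J=\{2,3,\dots\}\subseteq J$ --- and to substitute. Feeding $M(n)$, $N(n)$ and $|G'|=dr$ into the second generating-function identity of that corollary, with $I\cup\{0\}=\{0,1,2,\dots\}$, and performing the elementary simplification of the coefficients (exactly as in the passage from Theorem \ref{theorem:exponentialdowling} to Corollary \ref{corollary:dowlingmobius}), produces the asserted formula; the $n=0$ term is handled by the convention $\mu_{I,J}(0)=0$, consistent with the left-hand side.

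I expect the only genuine work to be the bookkeeping in the second paragraph: confirming that the abstract type parameter $b$ of $\mathbf{S}^{(d,0,d)}$ tracks a fixed multiple of the concrete zero-block size, so that restricting to $b\in J$ deletes exactly the $G$-partitions with empty zero block. That $\mathbf{S}^{(d,0,d)}$ is an exponential Dowling structure and that $I,J$ satisfy the semigroup and absorption conditions is immediate from Lemma \ref{lemma:expdowlingstructure} and the choices above, after which the conclusion is a direct specialisation of Corollary \ref{corollary:restricted2}, with no further combinatorics --- in complete analogy with the derivation of Corollary \ref{corollary:dowlingmobius}.
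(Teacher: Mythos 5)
Your proposal is correct and follows essentially the same route as the paper: the paper's proof is precisely to apply Corollary \ref{corollary:restricted2} to the exponential Dowling structure of Definition \ref{definition:R'_n} (specialised with $e=0$, $k=d$ and $r$ replaced by $dr$, so that $M(n)$ and $N(n)$ are as in Lemma \ref{lemma:atoms2}) with $I=\mathbb{N}$ and $J=\mathbb{N}\setminus\{0\}$. Your additional bookkeeping identifying the type parameter $b$ with $\tfrac{1}{d}$ times the zero-block size, so that the restriction $b\in J$ carves out exactly $Q_{dn}(dr,d,d,\{0\})$, is exactly the verification the paper leaves implicit.
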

        
        \begin{proof}
          In Corollary \ref{corollary:restricted2} set $\mathbf{R}'^{(d)},
          \mathbf{S}^{(d, e, k)}$ as in Definition \ref{definition:R'_n}, and let $I =\mathbb{N}^{}, J
          =\mathbb{N} \setminus \{0\}$.
        \end{proof}

\section{Acknowledgements}
The author would like to acknowledge his supervisor Gus Lehrer and associate supervisor Anthony Henderson for their encouragement, patience, generosity and enthuiasm.

During preparation of this work, it became evident that Alex Miller had obtained many of the same results independently.  The author would like to acknowledge this
and thank Alex for useful discussions on this topic.

\bibliographystyle{plain}

\bibliography{bibliography2}

\end{document}